\def\psl{\mathrm{PSL}(2,\mathbb{R})}
\def\pgl{\mathrm{PGL}(2,\mathbb{R})}
\def\SL{\mathrm{SL}(2,\mathbb{R})}
\def\sl{\mathfrak{sl}_2(\mathbb{R})}
\def\fund{\pi_1(\Sigma)}
\def\hom{\mathrm{Hom}\big(\fund, \psl\big)}
\def\mcg{\mathrm{Mod}(\Sigma)}
\def\univcover{\widetilde{\SL}}
\def\Hyp{\mathrm{Hyp}}
\def\Par{\mathrm{Par}}
\def\Parp{\mathrm{Par}^+}
\def\Parm{\mathrm{Par}^-}
\def\Ell{\mathrm{Ell}}
\def\R{\mathcal{R}(\Sigma)}
\def\Rns{\mathcal{R}_n^s(\Sigma)}
\def\HP{\mathrm{HP}}
\def\HPns{\mathrm{HP}_n^s}
\def\NPns{\mathrm{NP}^s_n(\Sigma)}
\def\ev{\mathrm{ev}}
\def\tr{\mathrm{tr}}
\def\parpp
\def\parmp
\def\part
\def\ell
    \cos\frac{\theta}{2} & \sin\frac{\theta}{2} \\
\sin\frac{\theta}{2} & \cos\frac{\theta}{2}
\def\elltheta
\def\path{\{\phi_t\}\interval}
\def\interval{_{t\in [0,1]}}
\newtheorem{theorem}{Theorem}[section]
\newtheorem{proposition}[theorem]{Proposition}
\newtheorem{lemma}[theorem]{Lemma}
\newtheorem{corollary}[theorem]{Corollary}
\newtheorem{conjecture}[theorem]{Conjecture}
\newtheorem{remark}[theorem]{Remark}
\theoremstyle{definition}
\newtheorem{definition}[theorem]{Definition}
\newtheorem{question}[theorem]{Question}
\begin{document}
	
\title{\textbf{On totally hyperbolic non-Fuchsian type-preserving representations}}
\author{Inyoung Ryu}
\date{}
\maketitle

\begin{abstract}
We identify type-preserving representations $\phi: \pi_1(\Sigma)\to \psl$ of the fundamental group of \textcolor{black}{every} punctured surface $\Sigma = \Sigma_{g,p}$ that are not Fuchsian yet send all non-peripheral simple closed curves to hyperbolic elements, which give \textcolor{black}{a negative answer} to a question of Bowditch. These representations have relative Euler class $e(\phi) = \pm (\chi(\Sigma) + 1)$, and their $\psl$-conjugacy classes form a full-measure subset of $2p$ connected components of the relative character variety.
We further show that, while these representations are not Fuchsian, their restrictions to certain subsurfaces of $\Sigma$ are Fuchsian.

\end{abstract}

%The following code is not run because of the percentage sign, but you might find it useful for future work
% \tableofcontents

% Notice => Note
% Respectively
% One-hole => one-holed
%In the case that => In the case where

\section{Introduction}\label{intro}

    In this paper, we consider punctured surfaces, which are surfaces obtained from connected, closed, oriented surfaces by removing finitely many points. 
    Let $\Sigma = \Sigma_{g,p}$ be a punctured surface of genus $g$ and $p\geqslant 1$ punctures, with Euler characteristic $\chi(\Sigma) < 0$.
    We study representations from the fundamental group $\fund$ of the surface $\Sigma$ to $\psl$. 
    In particular, we consider \emph{type-preserving} representations, \textcolor{black}{i.e.}, representations $\phi: \pi_1(\Sigma)\to \psl$ sending all the peripheral elements (elements represented by loops around a puncture) of $\pi_1(\Sigma)$ to parabolic elements of $\psl.$ We focus on type-preserving representations that are non-elementary, meaning those whose image is Zariski dense in $\psl$. 
    The relative Euler class $e(\phi)$ of a type-preserving representation $\phi$ is the obstruction class of the associated flat $\psl$-bundle over $\Sigma,$ which is an integer invariant under the isomorphism $\mathrm{H}^2(\Sigma, \partial \Sigma;\pi_1(\psl))\cong \mathbb Z.$
    %For each type-preserving representation $\phi$, there is a corresponding flat principal $\psl$-bundle over $\Sigma$ with parabolic boundary condition, for which $\phi$ is the holonomy representation. 
    %The relative Euler class of this bundle is an obstruction class in  $\mathrm{H}^2\big(\Sigma, \partial \Sigma ; \pi_1\big(\psl\big)\big)$, 
     %which measures the obstruction to extend the trivialization on $\partial \Sigma$ to a global trivialization on $\Sigma$. 
     %This obstruction class defines the relative Euler class $e(\phi)$ of the representation $\phi$, which can be identified with an integer via the isomorphism $\mathrm{H}^2\big(\Sigma, \partial \Sigma ; \pi_1\big(\psl\big)\big)\cong \mathbb{Z}.$ 
     The relative Euler class satisfies \textcolor{black}{the} Milnor-Wood inequality
     $$\chi(\Sigma)\leqslant e(\phi)\leqslant -\chi(\Sigma),$$
     where the equality $|e(\phi)| = -\chi(\Sigma)$ holds if and only if $\phi$ is \emph{Fuchsian}, that is, discrete and faithful. \textcolor{black}{(See \cite{yang}, Appendix A.)}
     \medskip 

    The main focus of this paper is on representations satisfying the following property:
    \begin{definition}
        A representation $\phi: \pi_1(\Sigma)\to \psl$ is \emph{totally hyperbolic} if it 
        sends every non-peripheral element of $\fund$ represented by a simple closed curve to a hyperbolic element of $\psl$.
    \end{definition}
     
    It is well known that every Fuchsian type-preserving representation is totally hyperbolic. As a converse, Bowditch asked the following question.

    \begin{question}\cite[Question C]{bowditch}
        If a non-elementary type-preserving representation $\phi:\pi_1(\Sigma)\to \psl$ is totally hyperbolic, must $\phi$ be Fuchsian?
    \end{question}
\begin{comment}    
    \textcolor{black}{Tian type-preserving cf.} Character variety
    $$\mathcal{M}(\Sigma) \doteq 
        \hom \sslash \psl$$
    $\mcg$ acts on $\mathcal{M}(S)$ by precomposition.
    Two connected components of $\mathcal{M}(\Sigma)$ are homeomorphic to Teichm\"uller space, and it is well-known that $\mcg-$action on each of these components is proper discontinuous.
    For closed surfaces, March\'e and Wolff \cite{MW} proved that the affirmative answer of the question above proves the following conjecture of Goldman: 
    \textcolor{red}{Since I mentioned Euler class here, I need a brief explanation on what it is, as I did in conn components}
    \begin{conjecture}
        On two connected components of $\mathcal{M}(S)$ with the maximal and minimal \textcolor{red}{relative Euler classes}, the action of mapping class group $\mathrm{Mod}(S)$ is proper discontinuous. On each of the other components, the $\mathrm{Mod}(S)-$action is ergodic.
    \end{conjecture}

    For closed surface $\Sigma_g$, March\'e
    and Wolff \cite{MW} connected this question to a conjecture of Goldman, that the mapping class group action is ergodic with respect to the measure induced by the Goldman symplectic form

    proved that the affirmative answer to this question implies that Goldman's conjecture is true, which led to the proof of the conjecture for the closed surface of genus 2

    \\
    \\

\end{comment}
    This question is closely related to a conjecture of Goldman regarding the mapping class group action on the $\psl$-character variety. 
    For a closed surface $\Sigma$, recall that the $\psl$-character variety is the \textcolor{black}{space} $\mathcal{M}(\Sigma) = \hom \textcolor{black}{\sslash} \psl$ where the quotient \textcolor{black}{is the GIT quotient.}
    The connected components of $\mathcal{M}(\Sigma)$ are indexed by the \textcolor{black}{Euler classes} \cite{goldman};
    and the mapping class group $\mcg$, the group of isotopy classes of orientation preserving homeomorphisms $f: \Sigma\to \Sigma$, acts on each connected component of $\mathcal{M}(\Sigma)$. 
    The extremal components characterized by the Euler classes $\pm \chi(\Sigma)$, which consist of Fuchsian representations, are
    respectively identified with the Teichm\"uller space of $\Sigma$. It is well known that the action of $\mcg$ on these \textcolor{black}{two} components is properly discontinuous. On each of the other non-extremal components, Goldman conjectured in \cite{goldman_conjecture} that 
    $\mcg$ acts ergodically with respect to the measure induced by Goldman symplectic form \cite{goldman_measure}. 
    Marché and Wolff \cite{MW} proved 
    that, when the surface $\Sigma$ is closed, an affirmative answer to Bowditch's question\textcolor{black}{---which can also be asked for closed surfaces---}implies that Goldman's conjecture is true. They also gave an affirmative answer to Bowditch's question for the closed oriented surface $\Sigma = \Sigma_{2,0}$ of genus two, thereby proving Goldman's conjecture in the genus two case \textcolor{black}{\cite{MW, MW2}}. It seems to be of folklore knowledge that, for a punctured surface $\Sigma = \Sigma_{g,p}$, Bowditch's question and Goldman's conjecture are similarly related via the relative character variety of type-preserving representations.
    %\smallskip

    For punctured surfaces with Euler characteristic $\chi(\Sigma) = -1$, namely, the \textcolor{black}{once-punctured} torus and the \textcolor{black}{thrice-punctured} sphere, Bowditch's question has an affirmative answer \textcolor{black}{for the trivial reason that} 
    every non-elementary type-preserving representation has relative Euler class $\pm 1$---and is therefore Fuchsian. 
    The first \textcolor{black}{set of} counterexamples \textcolor{black}{to Bowditch's question} were identified by Yang in \cite{yang}, in the case of the four-holed sphere $\Sigma = \Sigma_{0,4}$, with relative Euler class $\pm 1$. For other punctured surfaces, the question has remained open. 
    \smallskip
    
    The main result of this paper gives counterexamples to Bowditch's question for \textcolor{black}{every} punctured surfaces $\Sigma = \Sigma_{g,p}$ with $\chi(\Sigma)\leqslant -2$. 
    To state the result, we recall that there is an invariant $s(\phi)\in \{\pm 1\}^p$ of each type-preserving representation $\phi$\textcolor{black}{,} called the \emph{sign} of $\phi$, which assigns each puncture of $\Sigma$  a sign $+1$ or $-1.$ (See Section \ref{sign} for details.) 
    For the sign $s\in\{\pm 1\}^p,$ we let $p_+(s)$ be the number of $+1$'s and let $p_-(s)$ be the number of $-1$'s in the components of $s.$
    For $n\in \mathbb{Z}$ and $s\in \{\pm 1\}^p$, let
    $\mathcal{M}^s_n(\Sigma)$ be a subspace of the relative character variety $\mathcal{M}(\Sigma)$ which consists of the \textcolor{black}{$\psl$-conjugacy} classes of type-preserving representations with relative Euler class $n$ and sign $s$.
    The main result in \cite{RY} shows that, 
    if $n$ and $s$ \textcolor{black}{satisfy} the generalized Milnor-Wood inequality 
    $$\chi(\Sigma) + p_+(s)\leqslant n\leqslant -\chi(\Sigma) - p_-(s),$$ 
    then the space $\mathcal{M}^s_n(\Sigma)$ is non-empty and forms a connected component of $\mathcal{M}(\Sigma)$. The following Theorem \ref{thm_counter_Bowditch} shows that there exist totally hyperbolic, non-Fuchsian representations whose $\psl$-conjugacy classes form a dense subset of $2p$ connected components of $\mathcal{M}(\Sigma)$.
    
     \medskip
    \begin{theorem}\label{thm_counter_Bowditch}
        Let $\Sigma = \Sigma_{g,p}$ with $\chi(\Sigma)\leqslant -2$ and $p\geqslant 1$,  
        \textcolor{black}{and assume $n \in \mathbb{Z}$ and $s\in \{\pm 1\}^p$ satisfy one of the following:} \begin{enumerate}[(1)] 
        \item $n = -\chi(\Sigma) - 1$ and $p_-(s) = 1$, and
        
        \item $n = \chi(\Sigma) + 1$ and $ p_+(s) = 1$
        \end{enumerate}
        Then there exist uncountably many non-Fuchsian type-preserving representations $\phi:\pi_1(\Sigma)\to \psl$ of relative Euler class $e(\phi) = n$ and sign $s(\phi) = s$ that 
        are \textcolor{black}{totally hyperbolic.
        %send every non-peripheral simple closed curve to a hyperbolic element.
        Furthermore, the set of their $\psl$-conjugacy classes has full measure in $\mathcal{M}^s_n(\Sigma)$.}
    \end{theorem}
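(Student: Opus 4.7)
My plan is to construct the representations by cutting $\Sigma$ along a system of essential simple closed curves into a $\Sigma_{0,4}$-subsurface containing the exceptional puncture and a complementary subsurface, then applying Yang's counterexample \cite{yang} on the former and a Fuchsian representation on the latter, and finally gluing. An orientation-reversing self-homeomorphism of $\Sigma$ negates the Euler class and flips every sign, so case (2) follows from case (1); I therefore focus on case (1).

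Concretely, let $v$ be the puncture carrying the unique $-1$ in $s$. Since $\chi(\Sigma)\leqslant -2$, I can pick three disjoint non-peripheral essential simple closed curves $\gamma_1,\gamma_2,\gamma_3$ on $\Sigma$ cutting out a subsurface $Y\cong \pantstwo$ with $v$ as its fourth boundary component, and let $\Sigma'$ be the closure of $\Sigma\setminus Y$. On $Y$ I take Yang's non-Fuchsian, totally hyperbolic, type-preserving representation $\phi_Y$ of relative Euler class $+1$, with sign $-1$ at $v$ and sign $+1$ on $\gamma_1,\gamma_2,\gamma_3$. On each component of $\Sigma'$ I take a Fuchsian type-preserving representation $\phi_{\Sigma'}$ achieving the Milnor--Wood bound $-\chi(\Sigma')$, with sign $+1$ at each inherited puncture and signs at the $\gamma_i$ compatible with the gluing convention of \cite{RY} (so that the $\gamma_i$ signs cancel in the sign calculation for $\Sigma$). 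After conjugating so that the boundary holonomies agree along each $\gamma_i$, van Kampen produces $\phi:\fund\to\psl$. Additivity of the relative Euler class gives $e(\phi)=1+(-\chi(\Sigma)-2)=-\chi(\Sigma)-1$, and the sign of $\phi$ is $s$ by construction. Since $|e(\phi)|<-\chi(\Sigma)$, Milnor--Wood rigidity forces $\phi$ to be non-Fuchsian automatically.

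Total hyperbolicity of $\phi$ is the core technical task. A non-peripheral simple closed curve $c\subset \Sigma$ is either isotopic into $Y$, isotopic into $\Sigma'$, or essentially intersects $\gamma_1\cup\gamma_2\cup\gamma_3$. The first two cases are immediate from the corresponding local properties of $\phi_Y$ and $\phi_{\Sigma'}$. In the third case $\phi(c)$ is a word in the arc holonomies on the two sides, and I expect that, for each fixed $c$, the parameters (Yang's moduli on $Y$, Teichm\"uller parameters on $\Sigma'$, and gluing twists along the $\gamma_i$) for which $\phi(c)$ is non-hyperbolic form a proper real-analytic subset of the constructed family. Since there are only countably many isotopy classes of non-peripheral simple closed curves, a full-measure (and uncountable) subset of parameters produces totally hyperbolic $\phi$.

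The main obstacle is upgrading ``full measure in the constructed family'' to ``full measure in the entire component $\mathcal{M}^s_n(\Sigma)$''. For this I would first use the count in \cite{RY} to show the construction sweeps out an open subset of full dimension, then argue that for every non-peripheral simple closed curve $c$ the set $\{[\phi]\in \mathcal{M}^s_n(\Sigma):\tr^2(\phi(c))\leqslant 4\}$ is a proper real-analytic (hence measure-zero) subset of the component. The delicate point is that elliptic behavior on a fixed $c$ is a priori a positive-measure condition, so this requires producing, at each candidate elliptic representation, an explicit analytic deformation inside $\mathcal{M}^s_n(\Sigma)$ that moves $\tr(\phi(c))$ off $[-2,2]$; a Fenchel--Nielsen-type twist along a simple closed curve transverse to $c$, adjusted to preserve $e$ and $s$, is the natural candidate. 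Intersecting the complements of countably many such measure-zero sets then yields full measure in $\mathcal{M}^s_n(\Sigma)$.
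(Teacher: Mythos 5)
There are two genuine gaps, one in the construction and one in the genericity argument, and together they prevent your proof from reaching even a single totally hyperbolic representation in these components, let alone full measure. First, the gluing as you describe it cannot work: Yang's counterexamples on $Y\cong\pantstwo$ are \emph{type-preserving}, so the holonomies of $\gamma_1,\gamma_2,\gamma_3$ are parabolic; but these curves are non-peripheral essential simple closed curves in $\Sigma$, so every representation obtained by your gluing (and every member of the family you deform inside, since Yang's moduli, Teichm\"uller parameters and twist flows all preserve the boundary parabolicity) sends $\gamma_1,\gamma_2,\gamma_3$ to parabolics and is therefore \emph{never} totally hyperbolic. Moreover such glued families have positive codimension in $\mathcal{M}^s_n(\Sigma)$ (the boundary traces are pinned at $\pm2$), so they cannot ``sweep out an open subset of full dimension.'' If you instead relax to hyperbolic holonomy on the $\gamma_i$, you are no longer using Yang's theorem on the $Y$-piece and the total hyperbolicity of that piece becomes an unproven claim of the same nature as the theorem itself.

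Second, and more fundamentally, the genericity step does not survive scrutiny: ellipticity of $\phi(c)$ is an \emph{open} condition, so $\{[\phi]:\tr^2(\phi(c))\leqslant 4\}$ is a sublevel set, not an analytic set, and it can perfectly well have nonempty interior; exhibiting one deformation at each elliptic point (your Fenchel--Nielsen twist) only shows the set is proper, not that it is null. The only way the full-measure statement can hold is if the elliptic locus is actually \emph{empty} on the relevant set, and that is precisely the paper's key result (Theorem \ref{thm_tothyp}): every $\phi\in\NPns$ with $n=-\chi(\Sigma)-1$ and $p_-(s)=1$ is automatically totally hyperbolic. The paper proves this not by perturbation but by lifting to $\univcover$: if some non-peripheral simple closed curve were elliptic, then Goldman's commutator-trace lemma (for non-separating curves) or the evaluation-map analysis of Proposition \ref{prop_evimage} (for separating curves), combined with additivity of the relative Euler class and the generalized Milnor--Wood inequality with signs (Theorem \ref{thm_exist_condition}), would force $e(\phi)\leqslant-\chi(\Sigma)-2$, a contradiction. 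Only the parabolic locus $\{\tr=\pm2\}$ is discarded by a measure-zero argument (Proposition \ref{prop_nonpara}), and even there one must construct, for each curve $\gamma$, a representation in the same component with $\phi(\gamma)$ hyperbolic so that the analytic function $\tr^2-4$ is not identically zero. Without an analogue of Theorem \ref{thm_tothyp}, your argument cannot close.
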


    \begin{remark}
        \textcolor{black}{When $\Sigma = \Sigma_{0,4}$}, the totally hyperbolic representations in Theorem \ref{thm_counter_Bowditch} correspond to those identified in \cite{yang}.
    \end{remark}

    Although these totally hyperbolic representations are not Fuchsian, they exhibit certain similarities to Fuchsian representations, which will also be discussed in this paper. \textcolor{black}{We hope this observation will contribute to a better understanding of the mapping class group action on these components of the relative character variety.}
    \begin{corollary}\label{cor_almost_fuchsian}
        Let $\phi$ be a non-Fuchsian totally hyperbolic representation identified in Theorem \ref{thm_counter_Bowditch}. 
        Then there exists a pair of pants $P$ in $\Sigma$ such that,
        for each connected component $\Sigma'$ of the complement $\Sigma\setminus P$,
        the restriction $\phi|_{\pi_1(\Sigma')}$ is Fuchsian.
    \end{corollary}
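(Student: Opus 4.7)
The plan is to combine additivity of the relative Euler class with the generalized Milnor--Wood inequality recalled in the introduction. The representations identified in Theorem~\ref{thm_counter_Bowditch} saturate the upper (or, in case~(2), lower) Milnor--Wood bound $-\chi(\Sigma)-p_-(s)$ for their sign $s$, so cutting $\Sigma$ along curves that confine the ``sign defect'' to a single pair of pants should force the remaining pieces to saturate their own Milnor--Wood bounds as well, and hence to be Fuchsian.

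Without loss of generality focus on case~(1): $e(\phi) = -\chi(\Sigma)-1$ and $s$ has a unique $-1$-entry, at some puncture $c^*$ of $\Sigma$. Since $\chi(\Sigma)\leqslant -2$, we may choose an embedded pair of pants $P\subset \Sigma$ having $c^*$ as one boundary component; the remaining two boundary components of $P$ are either other punctures of $\Sigma$ (at which $\phi$ has sign $+1$) or essential simple closed curves in $\Sigma$ (sent to hyperbolic elements, by total hyperbolicity). Writing $\Sigma\setminus P = \bigsqcup_i \Sigma_i'$, each component $\Sigma_i'$ inherits only parabolic punctures of sign $+1$ together with hyperbolic boundary curves.

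By additivity of the relative Euler class across a decomposition whose cutting curves are sent to hyperbolic elements, $e(\phi) = e\bigl(\phi|_{\pi_1(P)}\bigr) + \sum_i e\bigl(\phi|_{\pi_1(\Sigma_i')}\bigr)$. Applying the generalized Milnor--Wood inequality piece by piece, $e(\phi|_{\pi_1(P)}) \leqslant -\chi(P) - 1 = 0$ since $\phi|_{\pi_1(P)}$ has exactly one $-1$-sign puncture, and $e(\phi|_{\pi_1(\Sigma_i')}) \leqslant -\chi(\Sigma_i')$ since $\phi|_{\pi_1(\Sigma_i')}$ has all parabolic signs equal to $+1$. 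Summing,
\[
-\chi(\Sigma)-1 \;=\; e(\phi) \;\leqslant\; 0 + \sum_i \bigl(-\chi(\Sigma_i')\bigr) \;=\; -\chi(\Sigma\setminus P) \;=\; -\chi(\Sigma)-1,
\]
so equality must hold throughout. In particular $e(\phi|_{\pi_1(\Sigma_i')}) = -\chi(\Sigma_i')$ for every $i$, and the extremal case of Milnor--Wood (as recorded in \cite{yang}, Appendix~A) identifies each restriction $\phi|_{\pi_1(\Sigma_i')}$ as Fuchsian. Case~(2) is symmetric.

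The main technical obstacle is to ensure that the additivity of the relative Euler class, and the Fuchsian characterization at the Milnor--Wood extremum, apply cleanly to subsurfaces carrying mixed boundary data (parabolic punctures together with hyperbolic boundary circles). Both should follow from the same framework used in \cite{RY} to establish the generalized Milnor--Wood inequality for type-preserving representations, but the boundary-holonomy bookkeeping and the Euler class normalization conventions must be carefully aligned with those of the present paper.
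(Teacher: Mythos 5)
Your proposal is correct and follows essentially the same route as the paper: choose a pair of pants $P$ containing the unique negative-parabolic puncture, use total hyperbolicity to ensure the cutting curves are hyperbolic, then combine additivity of the relative Euler class (Proposition \ref{prop_additivity}) with the generalized Milnor--Wood bound (Theorem \ref{thm_exist_condition}) to force each complementary component to attain $e = -\chi$, whence Fuchsian by the extremal characterization (Proposition \ref{prop_holonomy}). The ``technical obstacle'' you flag about mixed boundary data is already handled by the cited framework, since Theorem \ref{thm_exist_condition} is stated for signs with $p_0(s)\geqslant 1$, exactly the situation of subsurfaces with hyperbolic boundary curves and parabolic punctures.
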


    \noindent\textbf{Acknowledgments.} 
    The author would like to thank her supervisor Tian Yang for the guidance and support. The author would also like to thank Viraj Joshi for helpful discussions.
\section{Preliminary}\label{prelim}

    Let $\Sigma = \Sigma_{g,p}$ be a punctured surface with genus $g$ and $p$ punctures, and let $\fund$ be the fundamental group of $\Sigma$. 
    A \emph{primitive peripheral element} of $\fund$ 
    is an element represented by a closed curve freely homotopic to a circle around a puncture of $\Sigma$. Notice that each puncture of $\Sigma$ determines a conjugacy class of $\pi_1(\Sigma)$ consisting of the primitive peripheral elements around it.
    
    Let $c_1,\cdots, c_p$ be \textcolor{black}{the} primitive peripheral elements of $\pi_1(\Sigma)$, chosen for each puncture.
    %Let $\hom$ be the space of $\psl$-representations of $\pi_1(\Sigma)$ into $\psl$ endowed with the compact-open topology. 
     %and denote by \textcolor{black}{$\R$} be the subspace of $\hom$ consisting of the type-preserving representations. 
     %Similarly, a representation $\phi: \pi_1(\Sigma)\to\psl$  is with the \textcolor{black}{\emph{hyperbolic boundary condition}} if for each $i\in\{1,\dots, p\},$  $\phi(c_i)$ is a hyperbolic element of $\psl;$ and is with the \textcolor{black}{\emph{mixed boundary condition}} if for each $i\in\{1,\dots, p\},$ $\phi(c_i)$ is either a hyperbolic or a parabolic element of $\psl.$ 
    We denote by $\mathrm{HP}(\Sigma)$ the subspace of $\hom$ consisting of representations $\phi:\pi_1(\Sigma)\to \psl$ such that, for each $i\in\{1,\dots,p\},$ $\phi(c_i)$ is either a hyperbolic or a parabolic element of $\psl$. 
    In particular, when $\phi(c_i)$ is parabolic for every $i\in\{1,\dots,p\},$
    we call $\phi$ \emph{type-preserving}; and we denote by $\R$ the subspace of $\hom$ consisting of type-preserving representations.
    \smallskip

    In this section, we recall the definitions and basic properties of the relative Euler classes and the signs of representations in $\HP(\Sigma)$ together with the results from \cite{RY}, which serve as the key ingredients in our main results. 
    In the rest of the paper, we will use the notation $\pm A$ for a $\psl$-element, $A$ for an $\SL$-element, and $\widetilde{A}$ for an element in the universal cover $\univcover$.
    
    \subsection{The relative Euler classes}\label{Euler}

    We first recall the definition of the relative Euler class. 
    For each type-preserving representation $\phi$, there is a corresponding flat principal $\psl$-bundle over $\Sigma$ with parabolic boundary condition, for which $\phi$ is the holonomy representation. 
     The relative Euler class of this bundle is an obstruction class in  $\mathrm{H}^2\big(\Sigma, \partial \Sigma ; \pi_1\big(\psl\big)\big)$, 
     which measures the obstruction to extend the trivialization on $\partial \Sigma$ to a global trivialization on $\Sigma$. 
     This obstruction class defines the relative Euler class $e(\phi)$ of the representation $\phi$, which can be identified with an integer via the isomorphism $\mathrm{H}^2\big(\Sigma, \partial \Sigma ; \pi_1\big(\psl\big)\big)\cong \mathbb{Z}.$
     \medskip

    For more details, we recall the structure of $\psl$ and its universal cover $\univcover$. 
    Recall that elements of $\psl$ are classified as hyperbolic, parabolic and elliptic as follows: For a non-trivial $\pm A$ in $\psl,$ let $A$ be one of its lifts in $\SL.$ Then $\pm A$ is hyperbolic if $|\tr(A)|>2,$ parabolic if $|\tr(A)|=2,$ and elliptic if $|\tr(A)|<2.$ 
    We respectively let $\Hyp,$ $\Par$ and $\Ell$ be the spaces of hyperbolic, parabolic and elliptic elements of $\psl$. %Then $\Hyp$ and $\Ell$ are connected subspaces of $\psl$; and 
    $\Par$ consists of two disjoint subsets $\Parp$ and $\Parm$, which are respectively the $\psl$-conjugacy classes of $\pm \parpp$ and $\pm \parmp$. We call a parabolic element \emph{positive} if it is in $\Par^+$, and \emph{negative} if it is in $\Par^-$.
  Similarly, non-central elements of the universal covering $\univcover$ of $\psl$ are also classified as \emph{hyperbolic}, \emph{parabolic}, or 
    \emph{elliptic} according to the type of their projections to $\psl$; and a parabolic element of $\univcover$ is \emph{positive} or \emph{negative} if its projection to $\psl$ is respectively so. As a convention, we define the \emph{trace} of an element in $\univcover$ to be the trace of its projection to $\SL.$ Then a non-central element in $\univcover$ is hyperbolic, parabolic, or elliptic if its trace is greater than, equal to, or less than $2$ in the absolute value.  
    \smallskip

    As $\psl$ is homeomorphic to an open solid torus, its universal covering 
    group $\univcover$ is  homeomorphic to $\mathbb{R}^3$, with the group of
    deck transformations 
    $\pi_1(\psl)\cong \mathbb{Z}$. 
    The preimage of each of $\Hyp$, $\Par^\pm$ and $\Ell$ in $\univcover$ consists of infinitely many connected components, indexed by the integers. We denote these components as follows: 
    \smallskip
    
    Recall that the center of  $\univcover$ consists of the lifts of $\pm\mathrm I$, and is isomorphic to $\pi_1\big(\psl\big)\cong \mathbb{Z}$.  Let $\widetilde{\exp}: \sl\to \univcover$ be the exponential map of $\univcover$, and let $z \doteq \widetilde{\exp}\begin{bmatrix}
        0 & \pi \\
       -\pi & 0 
    \end{bmatrix}$. Then $z$ is a generator of the center of $\univcover$, and every lift of $\pm\mathrm{I}$ in $\univcover$ is equal to $z^n = \widetilde{\exp}
    \begin{bmatrix}
        0 & n\pi \\
       -n\pi & 0 
    \end{bmatrix}$ for some $n\in\mathbb Z$. 
\begin{comment}
    (In \cite{goldman}, the generator $z$  is chosen as$\widetilde{\exp}
    \begin{bmatrix}
        0 & -\pi \\
       \pi & 0 
    \end{bmatrix}$, which is the inverse of ours.
    As a result, the relative Euler class defined there  
    is the negative of ours.) 
\end{comment} 
Let  $\Hyp_0$ be the space of hyperbolic elements of $\univcover$ which lie in one-parameter subgroups; %Then each element in $\Hyp_0$ lies in a one-parameter subgroups of $\univcover$. 
    and for any $n\in\mathbb Z$, let  $\Hyp_n \doteq z^n\Hyp_0$. 
    Then $\{\Hyp_n\}_{n\in \mathbb Z}$ are the connected components of the space of hyperbolic elements in $\univcover$; and two hyperbolic elements in $\univcover$ are conjugate if and only if they lie in the same connected component $\Hyp_n$  for some $n\in\mathbb Z$ and have the same trace.
    Similarly, let $\Par^+_0$ and $\Par^-_0$ respectively be the spaces of positive and negative parabolic elements of $\univcover$ lying in the image  of the exponential map, and let  $\Par_0
    = \Par^+_0\cup \Par^-_0.$ For any $n\in\mathbb Z,$ let $\Par^+_n = z^n\Par^+_0$ and $\Par^-_n = z^n\Par^-_0,$ and let $\Par_n \doteq \Par^+_n\cup \Par^-_n=z^n\Par_0.$ Then $\{\Par^+_n\}_{n\in \mathbb Z}$ and $\{\Par^-_n\}_{n\in \mathbb Z}$ are respectively the connected components of the space of positive and negative parabolic elements in $\univcover$; and two parabolic elements in $\univcover$ are conjugate if and only if they lie in the same connected component $\Par^+_n$ or $\Par^-_n$ for some $n\in\mathbb Z.$  For $n\in \mathbb{Z}$, we denote by $\overline{\Hyp_n}$ the closure of $\Hyp_n$ in $\univcover$, which is a union of 
    $\Hyp_n,$ $\Par_n$ and $\{z^n\}$. 
        
Unlike hyperbolic and parabolic elements, all elliptic elements of $\univcover$ lie in one-parameter subgroups; and we need to index their connected components differently. For $n>0$, let $\Ell_n$ be the subspace of  $\univcover$  consisting of elements conjugate to $\widetilde{\exp}\begin{bmatrix}
        0 & \theta \\
       -\theta & 0 
    \end{bmatrix}$ for some $\theta$ in $((n-1)\pi, n\pi),$ and for $n<0$, let  $\Ell_n$ be the subspace of  $\univcover$ consisting of elements conjugate to $\widetilde{\exp}\begin{bmatrix}
        0 & \theta \\
       -\theta & 0 
    \end{bmatrix}$ for some $\theta$ in $(n\pi, (n+1)\pi).$
    Then $\{\Ell_n\}_{n\in \mathbb Z\setminus\{0\}}$ are the connected components of the space of elliptic  elements in $\univcover;$ and two elliptic elements in $\univcover$ are conjugate if and only if both lie in the same connected component $\Ell_n$ and have the same trace. 
    For $n_1, n_2\in \mathbb{Z}$ with $n_1<n_2$, 
    $$\Ell_{n_2} = z^{n_2-n_1}\Ell_{n_1}$$ if $n_1$ and $n_2$ have the same sign, while 
    $$\Ell_{n_2} = z^{n_2-n_1-1}\Ell_{n_1}$$ if $n_1$ and $n_2$
    have opposite signs, \textcolor{black}{(because $\Ell_0$ is undefined).}
    See Figure \ref{fig: Universal_cover_Ell_colored2}.

\begin{figure}[hbt!]
        \centering
        \begin{overpic}[width=1.0\textwidth]{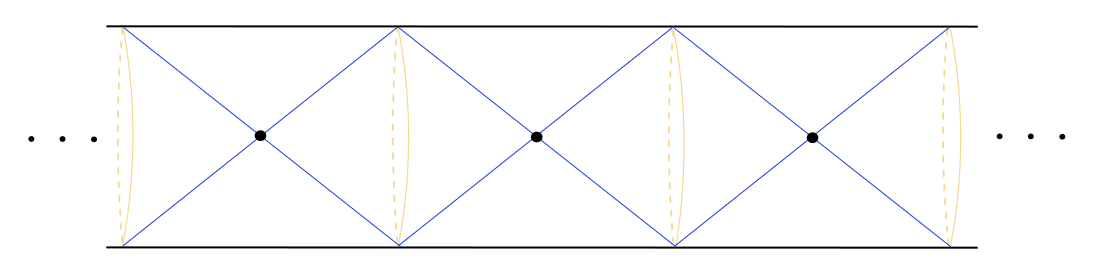}
        % Alt text: A schematic diagram representing the universal covering of $\mathrm{PSL}(2,\mathbb{R})$, shown as a horizontal strip divided into diamond-shaped regions by diagonal lines. Each region is labeled either with "Par", "Ell", or "Hyp", indexed by integers; central points are labeled $I$, $z$, or $z^{-1}$. The pattern extends infinitely in both directions.

        %center
            \put(22.0,13.5){\textcolor{black}{$z^{-1}$}}

            \put(48,13.5){\textcolor{black}{$\mathrm{I}$}}

            \put(72.5,13.5){\textcolor{black}{$z$}}

        %Hyp_n
        \put(21,3){\textcolor{OliveGreen}{$\Hyp_{-1}$}}

            \put(46,3){\textcolor{OliveGreen}{$\Hyp_{0}$}}
            
            \put(71,3){\textcolor{OliveGreen}{$\Hyp_{1}$}}

        %Par_n
            \put(14,20){\textcolor{blue}{$\Par^-_{-1}$}}

            \put(27,20){\textcolor{blue}{$\Par^+_{-1}$}}

            \put(39,20){\textcolor{blue}{$\Par^-_0$}}

            \put(53,20){\textcolor{blue}{$\Par^+_0$}}

            \put(64,20){\textcolor{blue}{$\Par^-_1$}}

            \put(78,20){\textcolor{blue}{$\Par^+_1$}}

        %Ell_n
        \put(10,11){\textcolor{red}{$\Ell_{-2}$}}
        
        \put(33,11){\textcolor{red}{$\Ell_{-1}$}}

        \put(59,11){\textcolor{red}{$\Ell_1$}}

        \put(84,11){\textcolor{red}{$\Ell_2$}}
        \end{overpic}
        \caption{\label{fig: Universal_cover_Ell_colored2} The universal covering $\univcover$}
    \end{figure}

     For a representation $\phi$ in $\HP(\Sigma)$, its relative Euler class is defined as follows.
    Let 
    $$\fund = \big\langle a_1, b_1, \cdots, a_g, b_g, c_1, \cdots, c_p \big|
    [a_1, b_1] \cdots [a_g, b_g] c_1\cdots c_p
    \big\rangle$$ be a presentation of $\pi_1(\Sigma)$, where $c_1,\cdots, c_p$ are \textcolor{black}{the} primitive peripheral elements of $\pi_1(\Sigma)$.  
    For each $i\in\{1,\dots, p\},$ as the image $\phi(c_i)$ of the peripheral element $c_i$ is either hyperbolic or parabolic, there exists a unique lift 
    $\widetilde{\phi(c_i)}$  of $\phi(c_i)$ in $\univcover$ that lies in $\overline{\Hyp_0}$.  
 For $j \in \{1,\cdots, g\}$, choose arbitrary lifts 
    $\widetilde{\phi(a_j)}$ and $\widetilde{\phi(b_j)}$ of $\phi(a_j)$ and $\phi(b_j)$ 
    in $\univcover$; and 
    note that the commutator 
    $[\widetilde{\phi(a_j)}, \widetilde{\phi(b_j)}]$ is independent of the choice of the lifts. %Indeed, consider the lifts $z^n\widetilde{\phi(a_j)}, z^m\widetilde{\phi(b_j)}$ for some $n,m\in \mathbb{Z}$. Since $z^n$ and $z^m$ lies in the center of $\univcover$, one can compute $$\big[z^n\widetilde{\phi(a_j)}, z^m\widetilde{\phi(b_j)}\big]= z^n\widetilde{\phi(a_j)}z^m\widetilde{\phi(b_j)}\widetilde{\phi(a_j)}^{-1}z^{-n}\widetilde{\phi(b_j)}^{-1}z^{-m} = \big[\widetilde{\phi(a_j)}, \widetilde{\phi(b_j)}\big].$$
    Since $$[\phi(a_1), \phi(b_1)] \cdots [\phi(a_g), \phi(b_g)] \phi(c_1)\cdots\phi(c_p)=\pm\mathrm{I},$$ 
    the product $[\widetilde{\phi(a_1)}, \widetilde{\phi(b_1)}] \cdots [\widetilde{\phi(a_g)}, \widetilde{\phi(b_g)}] \widetilde{\phi(c_1)}\cdots\widetilde{\phi(c_p)}$ in $\univcover$ projects to 
    $\pm\mathrm{I}$ in $\psl$ under the covering map, hence lies in the center of $\univcover.$ Hence, there exists an $n\in\mathbb Z$ such that $$[\widetilde{\phi(a_1)}, \widetilde{\phi(b_1)}] \cdots [\widetilde{\phi(a_g)}, \widetilde{\phi(b_g)}] \widetilde{\phi(c_1)}\cdots\widetilde{\phi(c_p)} = z^n.$$
   The \emph{relative Euler class} $e(\phi)$ of the representation $\phi\in\mathrm{HP}(\Sigma)$ is defined as
   $e(\phi)\doteq n.$
    \\
    
    The relative Euler class satisfies the following additivity property.
    \begin{proposition}\label{prop_additivity}\cite[Proposition 3.7]{goldman}
        %\textit{(Additivity of the relative Euler class)} 
        Let $\Sigma = \Sigma_{g,p},$ and let $\gamma$ be a simple closed curve on $\Sigma$ separating $\Sigma$ into two subsurfaces $\Sigma_1$ and $\Sigma_2$.
     Suppose a representation $\phi\in \HP(\Sigma)$ maps $[\gamma]\in \pi_1(\Sigma)$ to a hyperbolic or parabolic element of $\psl.$ Then $\phi|_{\pi_1(\Sigma_1)}\in \mathrm{HP}(\Sigma_1)$, $\phi|_{\pi_1(\Sigma_2)}\in \mathrm{HP}(\Sigma_2)$, and 
        $$e(\phi) = e\big(\phi|_{\pi_1(\Sigma_1)}\big) + e(\phi|_{\pi_1(\Sigma_2)}).$$
    \end{proposition}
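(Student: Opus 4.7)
The plan is to work in the universal cover $\univcover$ and exploit the centrality of $z$. Place the base point on $\gamma$ and use the standard surface presentations of $\pi_1(\Sigma_i)$, where $\Sigma_i = \Sigma_{g_i, p_i+1}$ with $g = g_1+g_2$ and $p = p_1+p_2$: the peripheral element corresponding to $\gamma$ is $\gamma$ itself for $\pi_1(\Sigma_1)$ and $\gamma^{-1}$ for $\pi_1(\Sigma_2)$ (reflecting opposite induced orientations on the two sides). With this choice, the peripheral elements of each $\Sigma_i$ are either peripheral in $\Sigma$ (and so parabolic under $\phi$) or are $\gamma^{\pm 1}$ (hyperbolic or parabolic by hypothesis), so $\phi|_{\pi_1(\Sigma_i)}\in \HP(\Sigma_i)$.

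The key technical input is that $\overline{\Hyp_0}\subset \univcover$ is closed under inversion: $\Hyp_0$ lies in the union of one-parameter subgroups and is therefore invariant under $g\mapsto g^{-1}$, while inversion swaps $\Parp_0$ and $\Parm_0$ inside $\Par_0$ (since $\parpp^{-1} = \parmp$). Hence the distinguished lift $G\doteq \widetilde{\phi(\gamma)}\in\overline{\Hyp_0}$ has inverse $G^{-1}\in \overline{\Hyp_0}$, which is precisely the distinguished lift of $\phi(\gamma^{-1})$. Writing
\[P \doteq [\widetilde{\phi(a_1)},\widetilde{\phi(b_1)}]\cdots[\widetilde{\phi(a_{g_1})},\widetilde{\phi(b_{g_1})}]\widetilde{\phi(c_1)}\cdots\widetilde{\phi(c_{p_1})}\]
and $Q$ for the analogous product over $\Sigma_2$, the definitions give $PG = z^{e(\phi|_{\pi_1(\Sigma_1)})}$ and $QG^{-1} = z^{e(\phi|_{\pi_1(\Sigma_2)})}$. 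Since $PG$ is central, multiplying on the left by $P^{-1}$ and on the right by $P$ yields $GP = z^{e(\phi|_{\pi_1(\Sigma_1)})}$, so $G$ commutes with $P$; the same reasoning shows $G$ commutes with $Q$. Consequently,
\[PG\cdot QG^{-1} = P(GQ)G^{-1} = P(QG)G^{-1} = PQ = z^{e(\phi|_{\pi_1(\Sigma_1)})+e(\phi|_{\pi_1(\Sigma_2)})}.\]

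It remains to identify $PQ$ with the lifted defining relator of $\pi_1(\Sigma)$. In the Van Kampen presentation of $\pi_1(\Sigma) = \pi_1(\Sigma_1)\ast_{\langle \gamma\rangle}\pi_1(\Sigma_2)$, after eliminating $\gamma$ via $P\gamma = 1$ the single defining relation becomes $PQ = 1$, so the computation above reads $PQ = z^{e(\phi)}$ and the additivity $e(\phi) = e(\phi|_{\pi_1(\Sigma_1)}) + e(\phi|_{\pi_1(\Sigma_2)})$ follows. The main obstacle is that this Van Kampen relator is syntactically distinct from the standard relator used in the paper's definition of $e(\phi)$; one closes the gap by invoking the topological nature of the relative Euler class as an obstruction class in $H^2(\Sigma,\partial\Sigma;\pi_1(\psl))$, which makes it manifestly presentation-independent, or by verifying directly that the two relators produce the same central element in $\univcover$ through Tietze transformations.
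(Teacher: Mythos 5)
The paper offers no argument of its own here---the proposition is quoted from Goldman \cite{goldman}---so your proof has to stand on its own, and in essence it does: it is the standard lift-based argument. The two load-bearing observations are correct and well chosen: $\overline{\Hyp_0}$ is invariant under inversion (with $\Par^+_0$ and $\Par^-_0$ interchanged, and $\Hyp_0$ preserved since it sits inside one-parameter subgroups), so the distinguished lifts of $\phi(\gamma)$ and $\phi(\gamma)^{-1}$ are mutually inverse; and the centrality computation $PG\cdot QG^{-1}=PQ=z^{e_1+e_2}$ is exactly the mechanism that makes the relative Euler classes add. The restriction statement $\phi|_{\pi_1(\Sigma_i)}\in\HP(\Sigma_i)$ is also handled correctly.

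The one step that carries real content is the one you flag yourself: identifying $PQ$ with $z^{e(\phi)}$ requires that the lift formula computes $e(\phi)$ for a presentation adapted to $\gamma$ (the relator $R_1R_2$ obtained from Van Kampen), not just for the fixed standard relator used in the paper's definition; equivalently, invariance of the combinatorial definition under permuting and conjugating the peripheral generators and under the orientation-preserving change of marking that puts $\gamma$ in standard position. Appealing to the obstruction-theoretic description in $\mathrm{H}^2(\Sigma,\partial\Sigma;\pi_1(\psl))$ is a legitimate way to close this, but it is precisely where the substance of Goldman's cited proof lives, so as written it is asserted rather than proved. If you want the argument self-contained, note that $\overline{\Hyp_0}$ is also conjugation-invariant, so replacing a peripheral generator $c_i$ by $hc_ih^{-1}$ replaces its distinguished lift by $\widetilde{h}\,\widetilde{\phi(c_i)}\,\widetilde{h}^{-1}$ and does not change the central value obtained from the correspondingly modified relator; combined with invariance of $e$ under precomposition by orientation-preserving, puncture-respecting homeomorphisms, this makes the adapted-presentation computation agree with the defining one. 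Finally, keep the orientation bookkeeping explicit (both subsurfaces inheriting the orientation of $\Sigma$, so the two boundary peripherals are $\gamma$ and $\gamma^{-1}$ as you set up); with the opposite convention on one side the same computation would return $e_1-e_2$ rather than the sum.
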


    Recall that a \emph{holonomy representation} is a representation $\phi\in \HP(\Sigma)$ that is Fuchsian, i.e., discrete and faithful, and that the convex core \textbf{core}$\big(\mathbb{H}^2/\phi\big(\pi_1(\Sigma)\big)\big)$ is homeomorphic to $\Sigma$. 
    The following proposition determines the relative Euler classes of holonomy representations. 
    In the original statement, $\phi$ is assumed to send all peripheral elements to hyperbolic elements of $\psl$; however, since type-preserving representations of $\pi_1(\Sigma)$ are holonomy if and only if they are Fuchsian (see \cite{yang}, Appendix A), the proof works verbatim here.
    \begin{proposition}\label{prop_holonomy}\cite[Proposition 3.4]{goldman}
        Let $\phi$ be a representation in $\HP(\Sigma)$. Then its relative Euler class $e(\phi)$ satisfies $|e(\phi)| = -\chi(\Sigma)$ if and only if $\phi$ is a holonomy representation.
    \end{proposition}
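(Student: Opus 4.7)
The plan is to prove both directions using a pants decomposition of $\Sigma$ together with the additivity of the relative Euler class (Proposition \ref{prop_additivity}).

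For the ``if'' direction, suppose $\phi$ is a holonomy representation. Then $\Sigma$ is diffeomorphic to the convex core of $\mathbb{H}^2/\phi(\pi_1(\Sigma))$, and the associated flat $\psl$-bundle is isomorphic to its unit tangent bundle. The relative Euler class $e(\phi)$ then coincides with the Euler number of this tangent bundle, and Gauss--Bonnet gives $|e(\phi)| = -\chi(\Sigma)$, with the sign determined by the orientation convention and the choice of generator $z$ of the center of $\univcover$.

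For the ``only if'' direction, I would fix a pants decomposition $\Sigma = P_1\cup\cdots\cup P_{-\chi(\Sigma)}$ along non-peripheral simple closed curves $\gamma_1,\ldots,\gamma_{3g+p-3}$, and first argue that no $\phi(\gamma_i)$ can be elliptic. On a single pair of pants, the Milnor--Wood inequality gives $|e(\phi|_{\pi_1(P)})|\leqslant 1$, with strict inequality whenever some boundary element is elliptic; this sharper form follows either from the direct one-parameter computation of the Euler class on a pants or from a bounded-cohomology argument. Combined with additivity, this forces $|e(\phi)|<-\chi(\Sigma)$ whenever some $\phi(\gamma_i)$ is elliptic. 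Assuming all $\phi(\gamma_i)$ are hyperbolic or parabolic, additivity yields
\[
e(\phi)=\sum_{i=1}^{-\chi(\Sigma)} e(\phi|_{\pi_1(P_i)}),
\]
and the equality $|e(\phi)|=-\chi(\Sigma)$ forces each summand to equal $\pm 1$ with a common sign. The pants case of the proposition, which can be proved by direct matrix calculations on the rank-two free group $\pi_1(P_i)$, then shows that each $\phi|_{\pi_1(P_i)}$ is the holonomy of a hyperbolic structure on $P_i$.

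The main obstacle will be the final gluing step: showing that these local holonomy restrictions assemble into a global holonomy structure on $\Sigma$. The common sign of the Euler classes across pants guarantees that, along each interior curve $\gamma_i$, the hyperbolic structures on the two adjacent pairs of pants induce compatible orientations on the axis of $\phi(\gamma_i)$, so that the developing maps from the two sides can be glued equivariantly. Iterating this gluing across the pants graph produces a $\phi$-equivariant developing map $\widetilde{\Sigma}\to\mathbb{H}^2$, exhibiting $\phi$ as the holonomy of a hyperbolic structure on $\Sigma$ whose convex core is homeomorphic to $\Sigma$.
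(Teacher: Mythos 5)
The paper does not actually prove this proposition: it quotes it from \cite{goldman} (Proposition 3.4 there), adding only the remark that the hyperbolic-boundary hypothesis may be replaced by the parabolic one because type-preserving holonomy representations are exactly the Fuchsian ones (\cite{yang}, Appendix A). So the benchmark is Goldman's inductive argument, whose outline---decompose, rule out elliptic images of the decomposition curves when the Euler class is extremal, treat the pants base case, glue---your proposal follows. The genuine gap is in the elliptic-exclusion step. You assert that a sharper Milnor--Wood inequality on a pants, ``combined with additivity,'' forces $|e(\phi)|<-\chi(\Sigma)$ whenever some $\phi(\gamma_i)$ is elliptic; but Proposition \ref{prop_additivity} applies only when the cutting curve is sent to a hyperbolic or parabolic element. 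If some $\phi(\gamma_i)$ is elliptic (or equal to $\pm\mathrm{I}$, a case you never exclude), the restrictions to the adjacent pieces are not in $\HP$ of those pieces, their relative Euler classes in the sense used here (lifts chosen in $\overline{\Hyp_0}$) are not defined, and there is no additivity formula to sum over the pants. Making this step honest is precisely the content of Goldman's lifting analysis, which this paper reproduces as Theorems \ref{thm_liftcommu} and \ref{thm_prodimage}, Lemma \ref{lem_evimage_base} and Proposition \ref{prop_evimage}: one chooses lifts of elliptic elements in the components $\Ell_n$ of $\univcover$ and tracks which component $\Hyp_n\cup\Ell_n$ a product of lifts can land in, and this is what bounds the Euler class away from the extreme value when a curve image is elliptic. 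The bounded-cohomology alternative you mention (a real-valued relative Euler number with strict inequality for elliptic boundary) could be made to work, but it is a separate framework that would itself have to be set up and proved, not invoked as ``a sharper form.''

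The gluing step is also only sketched where it is genuinely delicate. Granting that every $\phi(\gamma_i)$ is hyperbolic and every $\phi|_{\pi_1(P_i)}$ is Fuchsian with Euler classes of a common sign, you still need a combination argument: the common sign does ensure that the convex cores of adjacent pieces lie on opposite sides of the common axis, but upgrading that to discreteness and faithfulness of the amalgam requires a ping-pong/Maskit-type argument, and for curves that are non-separating in the glued-up subsurface the HNN version is needed, which ``iterating across the pants graph'' elides. None of this is unfixable---it is how Goldman's induction runs---but as written both the elliptic-exclusion step and the combination step are assertions rather than proofs, and the first, as formulated, rests on an additivity statement that does not apply in exactly the situation where you use it.
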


    \subsection{The signs}\label{sign}
    In this subsection, we recall the definition of the sign, and state the key result in \cite{RY}.
    Let $\Sigma = \Sigma_{g,p}$, and let $c_1, \cdots, c_p$ be the primitive peripheral elements of $\fund$, chosen for each puncture. The \emph{sign} of a representation $\phi\in\HP(\Sigma)$ is a $p$-tuple $s(\phi)=(s_1,\dots,s_p)$ in $\{-1,0,+1\}^p$, where for each $i\in\{1,\dots,p\},$ 
    \begin{equation*}
   s_i = \left\{
    \begin{array}{rl}
       +1,  &  \text{if } \phi(c_i) \text{ is positive parabolic}\\
        0, & \text{if } \phi(c_i) \text{ is hyperbolic}\\ 
        -1, &  \text{if } \phi(c_i) \text{ is negative parabolic}.
    \end{array}\right.
    \end{equation*}
    Since the type of an element of $\psl$ is invariant under $\psl-$conjugation, the sign of $\phi$ is independent on the choice of the primitive peripheral elements $c_i$'s. In the rest of the paper, for a sign $s\in \{-1,0,+1\}^p$, we respectively let  $p_+(s),$ $p_0(s)$ and $p_-(s)$ be the number of $+1$'s, $0$'s and $-1$'s in the components of  $s$.
    \smallskip

    The following proposition describes the behavior of the relative Euler class and the sign under the $\pgl\setminus\psl$-conjugations.

    \begin{proposition}\label{prop_pglpsl}\cite[Proposition 2.14]{RY}
    Let $h$ be an element of $\pgl\setminus \psl$; and for a representation $\phi\in\HP(\Sigma),$ let $h \phi h^{-1}:\pi_1(\Sigma)\to\psl$ be  defined by 
    $h \phi h^{-1}(c)=h \phi(c)h ^{-1}$
 for each $c\in\pi_1(\Sigma).$  Then $h \phi h^{-1}\in\HP(\Sigma)$ with the relative Euler class $e(h \phi h^{-1})=-e(\phi)$ and the sign $s(h \phi h^{-1})=-s(\phi).$    
    \end{proposition}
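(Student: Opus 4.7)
The plan is to reduce to a single representative $h_0$ of the nontrivial coset $\pgl \setminus \psl$ and then track how the lifted conjugation automorphism acts on the generator $z$ of the center of $\univcover$. Since $\psl$ is normal in $\pgl$ with $\pgl/\psl \cong \mathbb{Z}/2$, any $h \in \pgl \setminus \psl$ can be written $h = h_0 g$ for some $g \in \psl$, where I take $h_0 = \pm\begin{bmatrix} 1 & 0 \\ 0 & -1 \end{bmatrix}$. Conjugation by $g \in \psl$ fixes both invariants: the sign, because $\Par^+$ and $\Par^-$ are distinct $\psl$-conjugacy classes; and the Euler class, because choosing any lift $\widetilde g$ of $g$ to $\univcover$, the inner automorphism $x \mapsto \widetilde g x \widetilde g^{-1}$ fixes $z$ and preserves each $\overline{\Hyp_n}$, $\Par^\pm_n$. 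So it suffices to prove the proposition for $h = h_0$, and the sign part follows at once from the direct computation $h_0 \parpp h_0^{-1} = \parmp$ together with the fact that conjugation preserves the hyperbolic/parabolic/elliptic trichotomy.

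For the Euler class, I would build the continuous lift $\widetilde\tau: \univcover \to \univcover$ of the automorphism $\tau: g \mapsto h_0 g h_0^{-1}$ of $\psl$ that sends the identity to the identity; such a lift exists and is unique because $\tau$ fixes $\pm\mathrm{I}$, and it is a group homomorphism by the uniqueness of basepoint-fixing continuous lifts. Since $\widetilde\tau$ preserves the center, $\widetilde\tau(z) = z^{\varepsilon}$ for some $\varepsilon \in \{\pm 1\}$. To evaluate $\varepsilon$, I would use that $\widetilde\tau$ agrees on one-parameter subgroups with $\widetilde{\exp} \circ d\tau \circ (\widetilde{\exp})^{-1}$, and compute
\[
d\tau \begin{bmatrix} 0 & \pi \\ -\pi & 0 \end{bmatrix} = h_0 \begin{bmatrix} 0 & \pi \\ -\pi & 0 \end{bmatrix} h_0^{-1} = \begin{bmatrix} 0 & -\pi \\ \pi & 0 \end{bmatrix},
\]
so $\widetilde\tau(z) = \widetilde{\exp}\begin{bmatrix} 0 & -\pi \\ \pi & 0 \end{bmatrix} = z^{-1}$.

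With $\widetilde\tau(z) = z^{-1}$ in hand, I would apply $\widetilde\tau$ to the defining equation
\[
[\widetilde{\phi(a_1)}, \widetilde{\phi(b_1)}] \cdots [\widetilde{\phi(a_g)}, \widetilde{\phi(b_g)}] \widetilde{\phi(c_1)}\cdots\widetilde{\phi(c_p)} = z^{e(\phi)}.
\]
The crucial verification is that $\widetilde\tau(\widetilde{\phi(c_i)}) = \widetilde{h_0\phi(c_i)h_0^{-1}}$, i.e.\ that $\widetilde\tau$ preserves $\overline{\Hyp_0}$; this holds because $\overline{\Hyp_0}$ is characterized as the connected component containing the identity of the preimage in $\univcover$ of the set of hyperbolic, parabolic, and identity elements of $\psl$, and $\widetilde\tau$ is a homeomorphism fixing the identity that sends hyperbolic, parabolic, and identity elements to elements of the same type. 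For the $a_j, b_j$ any lifts work and $\widetilde\tau$ sends lifts to lifts. Hence applying $\widetilde\tau$ converts the left side into the defining product for $h_0 \phi h_0^{-1}$ (equal to $z^{e(h_0 \phi h_0^{-1})}$) and the right side into $z^{-e(\phi)}$, yielding $e(h_0 \phi h_0^{-1}) = -e(\phi)$. The one genuinely nontrivial computation is the identification $\widetilde\tau(z) = z^{-1}$; this captures how the nontrivial component of $\pgl/\psl$ acts on $\pi_1(\psl) \cong \mathbb Z$, and everything else is bookkeeping about connected components of lifts.
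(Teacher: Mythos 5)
This proposition is imported from \cite{RY} (Proposition 2.14); the present paper gives no proof of it, so there is no in-paper argument to compare yours against, and I can only assess it on its own merits: it is correct. The reduction to $h_0=\pm\begin{bmatrix}1&0\\0&-1\end{bmatrix}$ is legitimate, since conjugation by $g\in\psl$, lifted to conjugation by any $\widetilde g\in\univcover$, fixes the identity and $z$, covers a type-preserving map of $\psl$, hence preserves $\overline{\Hyp_0}$ and therefore both invariants; the sign claim follows from $h_0\parpp h_0^{-1}=\parmp$ together with the fact that conjugation by $h_0$ permutes the $\psl$-conjugacy classes $\Par^{+}$ and $\Par^{-}$. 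For the Euler class, your two key points both check out: the identity-based lift $\widetilde\tau$ of $g\mapsto h_0gh_0^{-1}$ is a continuous automorphism of $\univcover$ with $\widetilde\tau(z)=\widetilde{\exp}\begin{bmatrix}0&-\pi\\\pi&0\end{bmatrix}=z^{-1}$, and $\widetilde\tau$ preserves $\overline{\Hyp_0}$ because $\overline{\Hyp_0}$ is exactly the identity component of the preimage in $\univcover$ of the non-elliptic locus of $\psl$ (consistent with the description of $\univcover$ in Section 2.1 and Figure 1), so $\widetilde\tau$ carries the preferred lifts $\widetilde{\phi(c_i)}$ to the preferred lifts for $h_0\phi h_0^{-1}$ while sending arbitrary lifts of $\phi(a_j),\phi(b_j)$ to lifts of their conjugates; applying $\widetilde\tau$ to the defining relation then yields $e(h_0\phi h_0^{-1})=-e(\phi)$ as you state.
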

    
    Let $e: \HP(\Sigma)\to \mathbb{Z}$ be the relative Euler class map and let $s: \HP(\Sigma)\to \{-1,0,+1\}^p$ be the sign map defined respectively by sending $\phi$ to the relative Euler class $e(\phi)$ and the sign $s(\phi)$ of $\phi.$
    For $n\in \mathbb{Z}$ and $s\in \{-1,0,+1\}^p$, we let $\HPns(\Sigma) = e^{-1}(n)\cap s^{-1}(s)$ be the subspace of $\HP(\Sigma)$ consisting of representations with relative Euler class $n$ and sign $s.$ In particular, if $s\in \{\pm 1\}$, we denote this space by $\Rns$. 
    The following main result from \cite{RY} gives the necessary and sufficient condition for $\HPns(\Sigma)$ to be non-empty.

    \begin{theorem}\label{thm_exist_condition}\cite[Theorem 1.8]{RY}
        Let $\Sigma = \Sigma_{g,p}$, $n\in \mathbb{Z}$ and 
        $s \in \{-1,0,+1\}^p$ with $p_0(s)\geqslant 1$.
        Then $\HPns(\Sigma)$ is non-empty if and only if 
        $$\chi(\Sigma) + p_+(s)\leqslant n\leqslant -\chi(\Sigma) - p_-(s).$$
    \end{theorem}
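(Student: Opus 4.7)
The plan is to prove the two directions separately. For the necessity---the generalized Milnor-Wood inequality $\chi(\Sigma)+p_+(s)\leqslant n\leqslant-\chi(\Sigma)-p_-(s)$---I would argue by induction on the complexity $-\chi(\Sigma)=2g+p-2$. For the sufficiency, I would construct representations by assembling building blocks on pairs of pants and gluing them via a pants decomposition.

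For the base cases of necessity, namely $\Sigma_{0,3}$ and $\Sigma_{1,1}$, I would work directly in $\univcover$. On a pair of pants the relation $c_1c_2c_3=1$ forces $\widetilde{\phi(c_1)}\,\widetilde{\phi(c_2)}\,\widetilde{\phi(c_3)}=z^n$, with each factor lying in $\overline{\Hyp_0}$; partitioning by the sign of each $\phi(c_i)$ and identifying which coset of the center contains such a triple product pins $n$ to the claimed interval. On a one-holed torus, the relation $[a,b]c_1=1$ reduces the question to which power of $z$ can arise as the product of a commutator with an element of $\overline{\Hyp_0}$, and a classical bound on commutator lifts in $\univcover$ restricts $n$. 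For the inductive step I would find a non-peripheral simple closed curve $\gamma$ whose image $\phi([\gamma])$ is hyperbolic or parabolic, cut $\Sigma$ along $\gamma$, apply Proposition \ref{prop_additivity}, and invoke the inductive hypothesis on each piece. The main subtlety is producing such a $\gamma$: in the degenerate scenario where every non-peripheral simple closed curve has elliptic image, I would show that $\phi$ must be elementary and check the inequality by hand in that case.

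For the sufficiency, I would fix a pants decomposition $\Sigma=P_1\cup\dots\cup P_k$ with $k=-\chi(\Sigma)$. The strategy is to push the prescribed sign $s$ onto those boundary components of the $P_i$ that come from punctures of $\Sigma$, assign sign $0$ (hyperbolic) on each interior gluing curve, and write the prescribed Euler class as $n=\sum_{i=1}^k n_i$ with each $n_i$ admissible on $P_i$ under its inherited signs. The base case then yields representations $\phi_i\in\HP(P_i)$ realizing $(n_i,s_i)$; after conjugating so that the holonomies of adjacent pieces along each interior curve agree, additivity assembles the $\phi_i$ into the desired $\phi\in\HPns(\Sigma)$.

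The main obstacle is the combinatorial decomposition $n=\sum_i n_i$ with each $n_i$ admissible on $P_i$. I would reduce this to the claim that the interval $[\chi(\Sigma)+p_+(s),\,-\chi(\Sigma)-p_-(s)]\cap\mathbb Z$ coincides with the Minkowski sum of the per-pants admissible integer intervals. Since every interior gluing curve is assigned sign $0$, each pair of pants with three hyperbolic boundary signs contributes the full interval $\{-1,0,1\}$, providing enough flexibility to realize any admissible global $n$; matching endpoints and total length then completes the argument.
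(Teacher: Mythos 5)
You should first note that this paper does not prove Theorem \ref{thm_exist_condition} at all: it is quoted from \cite{RY} (Theorem 1.8 there) and used as a black box, so there is no in-paper proof to compare against; I can only assess your proposal on its own terms. Your sufficiency argument is broadly sound and is close in spirit to how such constructions are run here (compare Lemma \ref{lem_nonpara}): build representations pants-by-pants using the product-map results of Theorem \ref{thm_prodimage}, assign hyperbolic holonomy to interior curves, and add Euler classes via Proposition \ref{prop_additivity}. Two points you gloss over but which are fixable: the per-pants existence statement must come with a \emph{prescribed} hyperbolic boundary element (not just a prescribed pair $(n_i,s_i)$), otherwise ``conjugating so the holonomies agree'' has nothing to match traces with; and gluing along non-separating curves (needed already for $\Sigma_{1,1}$ pieces and for building genus) requires an additivity statement beyond the separating-curve version quoted as Proposition \ref{prop_additivity}. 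Your Minkowski-sum bookkeeping for $n=\sum n_i$ is correct once interior curves carry sign $0$.

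The genuine gap is in your necessity argument. The inductive step cuts along a non-peripheral simple closed curve whose image is hyperbolic or parabolic, and your fallback---``if every non-peripheral simple closed curve has elliptic image then $\phi$ is elementary, and one checks that case by hand''---is not something one can check by hand: it is an unproven (and essentially deep, Gallo--Kapovich--Marden-type) statement, and indeed the difficulty of controlling which curves are elliptic is exactly the theme of Bowditch's question that this paper addresses. Moreover, even when a suitable curve exists, cutting along a \emph{parabolic} curve can produce pieces all of whose boundary signs are $\pm1$, i.e.\ pieces with $p_0=0$, to which your inductive hypothesis (the theorem as stated requires $p_0(s)\geqslant 1$) does not apply, so the induction is not self-contained. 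The robust route to necessity avoids cutting altogether: lift the defining relation to $\univcover$ and track which components $\Hyp_k$, $\Par_k^{\pm}$, $\Ell_k$ the successive partial products $[\widetilde{\phi(a_1)},\widetilde{\phi(b_1)}]\cdots\widetilde{\phi(c_i)}$ can occupy, using the commutator- and product-image results (Theorem \ref{thm_liftcommu}, Theorem \ref{thm_prodimage}); this is exactly the style of argument carried out in Proposition \ref{prop_evimage} of this paper, and it imposes the bound $\chi(\Sigma)+p_+(s)\leqslant n\leqslant-\chi(\Sigma)-p_-(s)$ with no genericity assumption on the images of simple closed curves.
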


    \subsection{Images of commutator and product maps}\label{ev}
    %Observe that the relative Euler class is computed by lifting composing the commutator and the product maps from $\univcover\times\univcover$ to $\univcover$. 
    In this subsection, we describe the images of the commutator and product maps in $\univcover$, which will be used to compute the relative Euler classes.
    For a surface $\Sigma$ with $\chi(\Sigma) = -1$, its fundamental group $\pi_1(\Sigma)\cong \mathbb{F}_2$ is a free group of rank 2, 
    hence a representation $\phi:\pi_1(\Sigma)\to \psl$ is determined by the $\psl$-images of the two generators. 
    As a consequence, 
    $$\mathrm{Hom}\big(\pi_1(\Sigma),  \psl\big)\cong \psl\times\psl.$$
    If $\Sigma = \Sigma_{1,1}$ with $\pi_1(\Sigma) = \langle a_1, b_1, c_1\ |\ [a_1,b_1]\cdot c_1\rangle$, then for $\phi\in \HP(\Sigma)$, we let $\widetilde{\phi(a_1)}, \widetilde{\phi(b_1)}$ be arbitrary lifts of $\phi(a_1), \phi(b_1)$ in $\univcover$, and let $\widetilde{\phi(c_1)}$ be the unique lift of $\phi(c_1)$ in $\overline{\Hyp_0}$. If $e(\phi) =n$, then by the definition of the relative Euler class, we have
    $$[\widetilde{\phi(a_1)},
    \widetilde{\phi(b_1)}]\cdot 
    \widetilde{\phi(c_1)}
    = z^n,$$ hence the commutator $$\big[\widetilde{\phi(a_1)},
    \widetilde{\phi(b_1)}\big]
    = z^n \widetilde{\phi(c_1)}^{-1}
    $$ lies in $\overline{\Hyp_n}=
    z^n\overline{\Hyp_0}$. 
    Similarly, if $\Sigma = \Sigma_{0,3}$ with $\pi_1(\Sigma) = \langle c_1, c_2, c_3\ |\ c_1c_2c_3 \rangle$, then for $\phi\in \HP(\Sigma)$, we let $\widetilde{\phi(c_1)}, \widetilde{\phi(c_2)}, \widetilde{\phi(c_3)}$ be the unique lifts of $\phi(c_1), \phi(c_2), \phi(c_3)$ in $\overline{\Hyp_0}$; and if $e(\phi) =n$, then we have
    $$\widetilde{\phi(c_1)}\widetilde{\phi(c_2)}\widetilde{\phi(c_3)}
    = z^n,$$ and hence the product $$\widetilde{\phi(c_1)}\widetilde{\phi(c_2)}
    = z^n \widetilde{\phi(c_3)}^{-1}
    $$ lies in $\overline{\Hyp_n}$. 
    \\
    
    The following Theorem \ref{thm_liftcommu} and Theorem \ref{thm_prodimage} respectively describe the images of the commutator and product maps in $\univcover$. See Figure \ref{fig: evimage}.
    \begin{theorem}\cite[Theorem 7.1]{goldman}\label{thm_liftcommu}
            Let $\widetilde{R}: \univcover\times \univcover\to \univcover$ be the commutator map sending $(\widetilde{A}, \widetilde{B})$ to $[\widetilde{A},\widetilde{B}]
            = \widetilde{A}\widetilde{B}\widetilde{A}^{-1}\widetilde{B}^{-1}$. 
            The image $\mathrm{Im}\widetilde{R}$ of 
            $\widetilde{R}$ equals $$\mathcal{I} = 
            \mathrm{Hyp}_{- 1}
            \cup 
            \mathrm{Par}_{-1}^{+}
            \cup
            \mathrm{Ell}_{-1}
            \cup
            \mathrm{Par}_0
            \cup 
            \{\mathrm I\}
            \cup 
            \mathrm{Hyp}_0
            \cup
            \mathrm{Ell}_{1}
            \cup 
            \mathrm{Par}_{1}^{-}
            \cup
            \mathrm{Hyp}_{1}.
            $$
        %In particular, $\mathrm{Im}\widetilde{R}\cap \big(\bigcup_{k\in \mathbb{Z}\setminus \{0\}}\Ell_k\big) \subset \Ell_{-1}\cup \Ell_1$.
        \end{theorem}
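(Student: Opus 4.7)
My plan is to prove Theorem \ref{thm_liftcommu} by establishing both inclusions $\mathrm{Im}\widetilde{R} \subseteq \mathcal{I}$ and $\mathcal{I} \subseteq \mathrm{Im}\widetilde{R}$, using the correspondence between commutators in $\psl$ and representations of the once-punctured torus with presentation $\pi_1(\Sigma_{1,1}) = \langle a_1, b_1, c_1 \mid [a_1, b_1]c_1\rangle$. Given $A, B \in \psl$, I set $\phi(a_1) = A$, $\phi(b_1) = B$, $\phi(c_1) = [A,B]^{-1}$, so that when $\phi \in \HP(\Sigma_{1,1})$ the identity $[\widetilde{A}, \widetilde{B}] = z^{e(\phi)} \widetilde{\phi(c_1)}^{-1}$ directly reads off the component of $\univcover$ containing the commutator from the relative Euler class and the sign.

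For $\mathrm{Im}\widetilde{R} \subseteq \mathcal{I}$, I would split into cases by the type of $C := [A,B]$. When $C$ is hyperbolic or parabolic, $\phi \in \HP(\Sigma_{1,1})$, and the Milnor-Wood inequality gives $|e(\phi)| \leqslant 1$. Theorem \ref{thm_exist_condition} applied with $\Sigma = \Sigma_{1,1}$ forces $\phi(c_1) \in \Parp$ (if parabolic) when $e(\phi) = 1$, since the condition $1 \leqslant -\chi(\Sigma_{1,1}) - p_-(s(\phi))$ requires $p_-(s(\phi)) = 0$; because inversion in $\univcover$ exchanges $\Parp_0$ and $\Parm_0$, this gives $\widetilde{\phi(c_1)}^{-1} \in \Parm_0$, hence $[\widetilde{A},\widetilde{B}] \in \Parm_1$. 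Symmetrically, $e(\phi) = -1$ with parabolic $\phi(c_1)$ yields a commutator in $\Parp_{-1}$, and $e(\phi) = 0$ yields a commutator in $\Hyp_0 \cup \Par_0 \cup \{I\}$. The central case $C = \pm I$ means that $A, B$ commute in $\psl$, and hence lie in a common Cartan subgroup whose preimage in $\univcover$ is abelian, so $[\widetilde{A}, \widetilde{B}] = I$; in particular $z^{\pm 1}$ never lies in the image. When $C$ is elliptic, I argue by continuity: deform $(A, B)$ along a path to a pair with hyperbolic commutator, apply the previous case at the endpoint, and track the lift continuously. Each crossing of the parabolic locus of $\psl$ moves the lift from a closed component $\overline{\Hyp_n}$ across its parabolic boundary into an adjacent elliptic component, and by the adjacency structure recorded in Figure \ref{fig: Universal_cover_Ell_colored2}, only $\Ell_{-1}$ and $\Ell_1$ are reachable consistent with the constraints already established on the non-elliptic part of the image.

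For $\mathcal{I} \subseteq \mathrm{Im}\widetilde{R}$, I exhibit explicit realizers. The components $\{I\}$ and $\Hyp_0$ are filled by trivial or upper-triangular pairs. The extremal hyperbolic components $\Hyp_{\pm 1}$ and the admissible boundary parabolic components $\Parp_{-1}, \Parm_1$ are filled by Fuchsian representations of $\pi_1(\Sigma_{1,1})$ with $e(\phi) = \pm 1$ and appropriate sign, whose existence is guaranteed by Theorem \ref{thm_exist_condition} together with Proposition \ref{prop_holonomy}; the components $\Parp_0$ and $\Parm_0$ come from type-preserving representations with $e(\phi) = 0$. Finally, the elliptic components $\Ell_{\pm 1}$ are filled by deforming the Fuchsian examples past a parabolic boundary so that $\phi(c_1)$ becomes elliptic, again using the continuity tracking from the previous paragraph.

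The main obstacle will be the elliptic case of the containment. The Milnor-Wood inequality does not apply when $\phi(c_1)$ is elliptic, so one must argue via continuity, which requires a precise understanding of the local adjacency of the components $\Hyp_n$, $\Par^\pm_n$, $\Ell_n$ in $\univcover$. In particular, to rule out that the lifted commutator reaches $\Ell_{\pm 2}$, $\Parm_{-1}$, or $\Parp_1$, one must show that these components lie beyond the ``barriers'' $z^{\pm 1}$ through which the image of the continuous commutator map cannot pass---this uses the structural fact that $\mathrm{Im}\widetilde{R}$ is connected, contains $I$, and never contains $z^{\pm 1}$.
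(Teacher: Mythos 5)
The paper does not prove this statement at all: Theorem \ref{thm_liftcommu} is quoted verbatim from Goldman \cite{goldman} (Theorem 7.1), so your attempt has to stand on its own rather than be compared with an in-paper argument, and as written it has a genuine gap. The load-bearing step is the claim that a parabolic commutator with $e(\phi)=1$ must come from a \emph{positive} parabolic $\phi(c_1)$, which you extract from Theorem \ref{thm_exist_condition}. That theorem is stated only for signs with $p_0(s)\geqslant 1$, i.e.\ with at least one hyperbolic peripheral element, so it simply does not apply to a type-preserving representation of $\Sigma_{1,1}$, whose single peripheral image is parabolic; what you would need is the necessity direction of the sign-refined Milnor--Wood inequality in the all-parabolic case, which is not stated in this paper. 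Worse, that sign refinement (ruling out $\Par^+_1$ and $\Par^-_{-1}$ from the image while keeping $\Par^-_1$ and $\Par^+_{-1}$) is essentially the content of Goldman's theorem itself, and in the literature \cite{RY}-type results are established by exactly these commutator/product computations --- so invoking them here is circular. A non-circular proof has to do what Goldman does: work directly with traces and matrix entries (in the spirit of Lemmas \ref{lem_offdiag} and \ref{lem_offdiag_Ell} and the path-lifting computations of Lemma \ref{lem_evimage_base}) to pin down which parabolic and elliptic components the lifted commutator can enter.

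Two further points. Your elliptic-case ``barrier'' argument is not sufficient as stated: $\univcover\setminus\{z^{\pm1}\}$ is connected, so missing $z^{\pm 1}$ alone does not prevent the image from leaking from $\Hyp_1$ into $\Ell_2$ through $\Par^+_1$; the argument can be repaired (the image is connected, contains $\mathrm I$, and a connected set meeting $\Ell_2$ and its complement must meet $\partial\Ell_2\subset\Par^+_1\cup\{z\}\cup\Par^-_2\cup\{z^2\}$), but every one of those exclusions again rests on the parabolic-case step that is currently circular. Finally, in the surjectivity direction, upper-triangular pairs have unipotent commutators, so they realize points of $\Par_0\cup\{\mathrm I\}$, not $\Hyp_0$; and to get all of $\Ell_{\pm1}$ (not just some elliptic commutators) you need to realize every rotation angle in $(0,\pi)$, e.g.\ by an explicit family or a cone-surface/intermediate-value argument, which the sketch does not supply.
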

        \begin{theorem}\label{thm_prodimage}\cite[Proposition 3.4, Theorem 3.7, and Lemma 7.5]{RY} 
        Let $m: \univcover\times \univcover\to \univcover$ be the product map sending $(\widetilde{A}, \widetilde{B})$ to $\widetilde{A}\widetilde{B}$, and let $s, s_1, s_2\in \{\pm 1\}$.\\
        For hyperbolic image under $m$, the following holds:
        \begin{enumerate}[(1)]

            \item $m(\Hyp_0\times \Hyp_0)\cap \big(\bigcup_{k\in \mathbb{Z}}\Hyp_k\big)
            = \Hyp_{-1}\cup  \Hyp_0\cup\Hyp_1$.

            \item $m(\Par_0^{sgn(s)}\times \Hyp_0)\cap \big(\bigcup_{k\in \mathbb{Z}}\Hyp_k\big) = \Hyp_0\cup \Hyp_{s}$.

            \item $m\big(\Par_0^{sgn(s_1)}\times \Par_0^{sgn(s_2)}\big)\cap \big(\bigcup_{k\in \mathbb{Z}}\Hyp_k\big) = \Hyp_{\frac{s_1+s_2}{2}}$.

        \end{enumerate}
        For elliptic image under $m$, the following holds:
        \begin{enumerate}[(1)] 

            %\item For $(A,B)\in \Ell^{sgn(m)}\times \Par_{\mathrm I}^{sgn(s)}$, $m(A,B)\in \Ell_m$.

            \item $m\big(\Par_0^{sgn(s)} \times\Par_0^{sgn(s)} \big)\cap \big(\bigcup_{k\in \mathbb{Z}\setminus \{0\}}\Ell_k\big) = \Ell_s$.

            \item $m\big(\Par_0\times\Ell_1 \big)\cap \big(\bigcup_{k\in \mathbb{Z}\setminus \{0\}}\Ell_k\big) = \Ell_1$.

            \item %$m\big(\overline{\Hyp_0}\times \overline{\Hyp_0}\big) = \mathcal{I}$. In particular, 
            $m\big(\Hyp_0 \times\Hyp_0\big)\cap \big(\bigcup_{k\in \mathbb{Z}\setminus \{0\}}\Ell_k\big) \subset \Ell_{-1}\cup \Ell_1$.
        \end{enumerate}
    \end{theorem}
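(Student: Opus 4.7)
The plan is to establish each of the six cases by combining $\psl$-conjugation reductions, explicit trace computations in $\SL$, and path-lifting arguments in $\univcover$. Since $\psl$-conjugation lifts to conjugation in $\univcover$ and preserves every component $\Hyp_k$, $\Par_k^{\pm}$, and $\Ell_k$, I would first normalize the first argument of $m$ to a standard lift of its projection: $\widetilde{A}=\widetilde{\exp}\begin{bmatrix}\lambda & 0\\0 & -\lambda\end{bmatrix}$ with $\lambda>0$ if $\widetilde{A}\in\Hyp_0$; $\widetilde{A}=\widetilde{\exp}\begin{bmatrix}0 & \pm 1\\0 & 0\end{bmatrix}$ if $\widetilde{A}\in\Par_0^\pm$; and $\widetilde{A}=\widetilde{\exp}\begin{bmatrix}0 & \theta\\-\theta & 0\end{bmatrix}$ for some $\theta\in(0,\pi)$ if $\widetilde{A}\in\Ell_1$. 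With $\widetilde{A}$ fixed, I would parameterize $\widetilde{B}$ inside its component and compute the $\psl$-trace of $\pi(m(\widetilde{A},\widetilde{B}))$, which decides whether the product is hyperbolic, parabolic, or elliptic in $\psl$.

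To identify the component of the lift $m(\widetilde{A},\widetilde{B})$ in $\univcover$, I would use a path-lifting argument. Pick a convenient basepoint $\widetilde{B}_0$ in the component of $\widetilde{B}$ (for instance $\widetilde{B}_0=\widetilde{A}^{-1}$, so that $m(\widetilde{A},\widetilde{B}_0)=\mathrm{I}$, or another element yielding an explicit product), connect $\widetilde{B}_0$ to $\widetilde{B}$ by a continuous path in its component, and follow the continuous image path $t\mapsto m(\widetilde{A},\widetilde{B}_t)$ in $\univcover$. Whenever the $\SL$-trace of this product has absolute value greater than $2$ (respectively less than $2$), the path remains in a single $\Hyp_k$ (respectively $\Ell_k$), and transitions between neighboring components occur only as the trace crosses $\pm 2$ (through $\Par_k^\pm$) or $0$ (through $z^k$). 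Realizing each listed component by one explicit example provides the $\supseteq$ direction of each equality, and reading off the sign of the rotation near the basepoint yields the one-sided refinements such as $\Hyp_s$ rather than $\Hyp_{\pm 1}$ in hyperbolic case (2) and $\Ell_s$ rather than $\Ell_{\pm 1}$ in elliptic case (1).

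The main obstacle is the $\subseteq$ direction, namely the sharp bound $|k|\leqslant 1$ in hyperbolic cases (1), (3) and in elliptic case (3). \emph{A priori} the $\SL$-trace of the product can sweep through $\pm 2$ or $0$ several times as $\widetilde{B}_t$ varies, so the lift could in principle wind into $\Hyp_k$ or $\Ell_k$ with $|k|\geqslant 2$; path-lifting alone does not rule this out. To control the winding, I would model the argument on Goldman's proof of Theorem \ref{thm_liftcommu}: shrink both factors simultaneously along one-parameter subgroups in $\univcover$ back toward the identity, verify by direct calculation that the product for small parameters lies in the same window $\mathcal{I}$ appearing in Theorem \ref{thm_liftcommu}, and observe that the set of $\univcover$-elements reachable by such a continuous shrinking from the original pair is connected and avoids each isolated central lift $z^n$ with $|n|\geqslant 2$. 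Since the original product sits in the closure of this reachable set, it must lie in $\mathcal{I}$ as well, forcing $|k|\leqslant 1$; intersecting $\mathcal{I}$ with $\bigcup_k \Hyp_k$ and $\bigcup_{k\neq 0}\Ell_k$ then gives precisely the listed components.
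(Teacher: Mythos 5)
First, note that the paper does not prove Theorem \ref{thm_prodimage} at all: it is quoted from \cite{RY}, so the only in-paper model for the intended technique is the proof of Lemma \ref{lem_evimage_base}, which handles the analogous statements by lifting the path $\{A_tB_t\}_{t\in[0,1]}$ and using the entry-sign lemmas (Lemma \ref{lem_offdiag} and Lemma \ref{lem_offdiag_Ell}) or a trace bound to forbid the lifted path from meeting specific neighboring components. Your general setup (conjugate the first factor into normal form, track the product path from $\mathrm I$, exhibit explicit examples for the $\supseteq$ inclusions) is consistent with that technique.

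The genuine gap is in your mechanism for the $\subseteq$ direction. You propose to bound the winding by arguing that the set reachable by shrinking both factors is connected and avoids the central lifts $z^n$ with $|n|\geqslant 2$, and to conclude containment in the window $\mathcal{I}$ of Theorem \ref{thm_liftcommu}. This is a non sequitur: $\univcover$ is homeomorphic to $\mathbb{R}^3$ and the center is a discrete set, so the complement of all central elements is connected, and a connected set avoiding every $z^n$ can still pass from $\Hyp_0$ through $\Par_0^+$, $\Ell_1$, $\Par_1^-$, $\Hyp_1$, $\Par_1^+$, $\Ell_2$, and onward to components of arbitrarily large index. Avoiding central elements therefore imposes no bound on $|k|$ whatsoever, and the appeal to Goldman's commutator theorem does not transfer, since a product of two lifts is not a commutator value and Goldman's argument is not of the form you describe. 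What is actually needed (and what the paper does in Lemma \ref{lem_evimage_base}, and \cite{RY} presumably does for Theorem \ref{thm_prodimage}) is to show that the specific lifted path $\{\widetilde{A_tB_t}\}_{t\in[0,1]}$ starting at $\mathrm I$ can never cross certain explicit ``walls'': for instance, sign control of the off-diagonal entries of $A_tB_t$ via Lemma \ref{lem_offdiag} and Lemma \ref{lem_offdiag_Ell} shows the path never meets $\Ell_{-1}$ or $\Ell_2$ (so it is trapped in $\overline{\Hyp_0}\cup\Ell_1\cup\overline{\Hyp_1}$), or a trace estimate such as $\tr(A_tB_t)>-2$ shows it never meets $\Par_{-1}^+\cup\Par_1^-\cup\{z^{\pm1}\}$. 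These wall-crossing exclusions, not avoidance of the center, are what pin down the component and also yield the one-sided refinements ($\Hyp_{\frac{s_1+s_2}{2}}$ in hyperbolic (3), $\Ell_s$ in elliptic (1)); your remark about ``reading off the sign of the rotation near the basepoint'' gestures at this but is not a substitute for the entry-sign analysis. (Minor additional slip: transitions between adjacent components occur where the $\SL$-trace reaches $\pm2$; the central lifts $z^k$ also have trace $\pm2$, not $0$.)
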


    \begin{figure}[hbt!]
        \centering
        \begin{overpic}[width=0.9\textwidth]{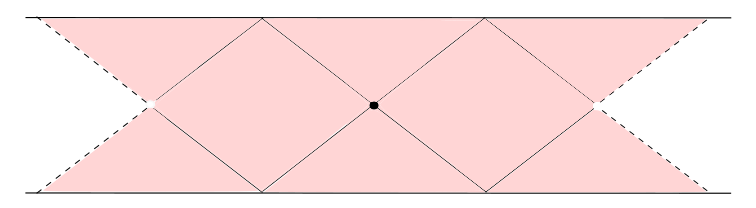}
        %Alt text: A diagram of the connected open subspace of the universal cover of $\mathrm{PSL}(2,\mathbb{R}), which is a union of $\mathrm{Hyp}_n$ for $n\in \{-1,0,1\}$, $\mathrm{Ell}_1$ and $\mathrm{Ell}_{-1}$,  $\mathrm{Par}^-_{1}$, $\mathrm{Par}^+_{-1}$, $\mathrm{Par}_{0}$ and $I$.

        %center
            %\put(20,17){\textcolor{black}{$z^{-1}$}}

            \put(49,15){\textcolor{black}{$\mathrm{I}$}}

            %\put(79,17){\textcolor{black}{$z$}}

        %Hyp_n
        \put(17,4){\textcolor{OliveGreen}{$\Hyp_{-1}$}}

            \put(47,4){\textcolor{OliveGreen}{$\Hyp_{0}$}}
            
            \put(77,4){\textcolor{OliveGreen}{$\Hyp_{1}$}}

        %Par_n
            %\put(12,25){\textcolor{blue}{$\Par^-_{-1}$}}

            \put(24,23){\textcolor{blue}{$\Par^+_{-1}$}}

            \put(39,23){\textcolor{blue}{$\Par^-_0$}}

            \put(55,23){\textcolor{blue}{$\Par^+_0$}}

            \put(68,23){\textcolor{blue}{$\Par^-_1$}}

            %\put(85,25){\textcolor{blue}{$\Par^+_1$}}

        %Ell_n
        %\put(7,14){\textcolor{red}{$\Ell_{-2}$}}
        
        \put(32.5,13){\textcolor{red}{$\Ell_{-1}$}}

        \put(62.5,13){\textcolor{red}{$\Ell_1$}}

        %\put(89,14){\textcolor{red}{$\Ell_2$}}
        \end{overpic}
        \caption{\label{fig: evimage} The image of the commutator map $\widetilde{R}: \univcover\times \univcover\to \univcover$}
    \end{figure}

\section{Proof of the main results}\label{results}
    Let $\Sigma = \Sigma_{g,p}$. By abuse of notation, we use the term ``simple closed curve" to refer both to the actual curve on $\Sigma$ and to its homotopy class in $\pi_1(\Sigma)$. Note that an oriented simple closed curve $\gamma$ on $\Sigma$ determines a conjugacy class in $\pi_1(\Sigma)$, hence its image under a representation to $\psl$ is well-defined up to conjugation. Moreover, we call an element of $\pi_1(\Sigma)$ \emph{separating} (resp. \emph{non-separating}) if it is represented by a separating (resp. non-separating) simple closed curve on $\Sigma$.

    Recall that a type-preserving representation $\phi$ is totally hyperbolic if it sends every non-peripheral simple closed curve in $\pi_1(\Sigma)$ to a hyperbolic element of $\psl$. 
    Since conjugation by an element of $\psl$ preserves hyperbolic elements, the total hyperbolicity of $\phi$ is independent of the choice of representatives for the simple closed curves. Moreover,
    conjugation by any element of $\pgl$, including those in $\pgl\setminus \psl$, preserves hyperbolic elements; and hence $\phi$ is totally hyperbolic if and only if any of its $\pgl$-conjugates is totally hyperbolic.
    Therefore, by Proposition \ref{prop_pglpsl}, 
    it suffices to show Theorem \ref{thm_counter_Bowditch} for relative Euler class $-\chi(\Sigma) - 1$.
    \medskip

    For $n \in \mathbb{Z}\setminus \{0\}$ and $s\in \{\pm 1\}^p$, define $\mathcal{M}^s_n(\Sigma)\doteq \Rns\slash \psl$ where the quotient is taken by \textcolor{black}{the} $\psl$-conjugation. 
    \textcolor{black}{(Note that for $n\neq 0$, the GIT quotient coincides with the quotient by the $\psl$-conjugation.)}
    Then the space $\mathcal{M}^s_n(\Sigma)$ admits a measure induced by \textcolor{black}{the} Goldman symplectic form, which coincides with the Weil-Petersson symplectic form when $|n| = -\chi(\Sigma)$ \cite{goldman_measure}.
    Let $\NPns\subset \Rns$ be a set of representations sending no non-peripheral simple closed curve to a parabolic element or to $\pm \mathrm I$. Notice that $\phi\in \NPns$ implies that \textcolor{black}{all the
    $\psl$-conjugations} of $\phi$ are in $\NPns$, hence the quotient $\NPns/\psl$ is a well-defined subspace of $\mathcal{M}^s_n(\Sigma)$.
    \medskip
    
    Theorem \ref{thm_counter_Bowditch} for relative Euler class $-\chi(\Sigma) - 1$ follows from Proposition \ref{prop_nonpara} and Theorem \ref{thm_tothyp}.

    \begin{proposition}\label{prop_nonpara}
        Let $\Sigma = \Sigma_{g,p}$ with $\chi(\Sigma)\leqslant -2$ and $p\geqslant 1$, and let $n = -\chi(\Sigma) - 1$ and $s\in \{\pm 1\}^p$ with $p_-(s) = 1$. 
        Then the space $\NPns/\psl$ has full measure in $\mathcal{M}^s_n(\Sigma)$. As a consequence, $\NPns$ contains uncountably many representations.
    \end{proposition}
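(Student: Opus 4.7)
The plan is to express the complement of $\NPns/\psl$ in $\mathcal{M}^s_n(\Sigma)$ as a countable union of proper real-analytic subvarieties, each of which has measure zero with respect to the Goldman symplectic volume. Full measure of $\NPns/\psl$ will then follow, and the uncountability of $\NPns$ is automatic: the manifold $\mathcal{M}^s_n(\Sigma)$ has positive dimension $-3\chi(\Sigma)-p \geqslant 2$ under the standing hypothesis $\chi(\Sigma)\leqslant -2$.

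The set of free homotopy classes of non-peripheral oriented simple closed curves on $\Sigma$ embeds into the set of conjugacy classes of the countable group $\pi_1(\Sigma)$, so is countable; enumerate these classes as $\{\gamma_i\}_{i\in\mathbb N}$. For each $i$, the map $\tau_i\colon\mathcal{M}^s_n(\Sigma)\to\mathbb R$ given by $\tau_i([\phi])=\tr(\phi(\gamma_i))^2$ is real-analytic and well-defined on $\psl$-conjugacy classes, since $\tr^2$ is. Because an element of $\psl$ satisfies $\tr^2 = 4$ if and only if it is parabolic or equal to $\pm\mathrm I$, we have
\[
\big\{[\phi]\in\mathcal{M}^s_n(\Sigma):\phi(\gamma_i)\in\Par\cup\{\pm\mathrm I\}\big\}=\tau_i^{-1}(4),
\]
and consequently $\mathcal{M}^s_n(\Sigma)\setminus\NPns/\psl=\bigcup_i\tau_i^{-1}(4)$. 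Provided $\tau_i$ is not identically $4$, the preimage $\tau_i^{-1}(4)$ is a proper real-analytic subvariety of the connected smooth manifold $\mathcal{M}^s_n(\Sigma)$ and hence has Goldman volume zero.

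The crux of the argument is therefore to verify $\tau_i \not\equiv 4$ for every $i$. I plan to do this by exhibiting, for each $\gamma_i$, a single representation $\phi\in\Rns$ with $|\tr(\phi(\gamma_i))|>2$. I would extend $\gamma_i$ to a pants decomposition $\mathcal P$ of $\Sigma$, which is possible since $\gamma_i$ is essential and non-peripheral; then, using the constructions behind Theorem~\ref{thm_exist_condition}, distribute Euler classes and signs over the pants of $\mathcal P$ in a way that is compatible (via the additivity in Proposition~\ref{prop_additivity}) with the totals $(n,s)=(-\chi(\Sigma)-1,s)$ and that forces the boundary holonomy of $\phi$ along $\gamma_i$ to be hyperbolic of prescribed large trace; finally glue to obtain the desired $\phi\in\Rns$.

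The main obstacle is this last construction. Since $n=-\chi(\Sigma)-1$ sits at the near-extremal end of the generalized Milnor-Wood range, the Euler classes of the pants in $\mathcal P$ must all be maximal in absolute value except for a single pair of pants that absorbs the unit defect. One must arrange this distribution so that it is compatible with $p_-(s)=1$ and so that at least one pair of pants adjacent to $\gamma_i$ lies in the Fuchsian regime, where the boundary trace along $\gamma_i$ can be freely chosen. This requires some combinatorial case analysis depending on whether $\gamma_i$ is separating or non-separating and on its position relative to the distinguished negative puncture. Once this flexibility is verified for every $\gamma_i$, the countable-union-of-null-sets argument above completes the proof.
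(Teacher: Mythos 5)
Your measure-theoretic reduction is exactly the paper's: enumerate the non-peripheral simple closed curves, note that the locus where $\phi(\gamma_i)$ is parabolic or $\pm\mathrm I$ is the level set $\tr^2=4$ of an analytic function on the connected component $\mathcal{M}^s_n(\Sigma)$, and conclude measure zero for each such set once the function is shown non-constant, i.e.\ once a single $\phi\in\Rns$ with $\phi(\gamma_i)$ hyperbolic is exhibited. That part is fine. But the exhibition step is the actual content of the proposition, and your sketch of it would not go through as written, so the proof has a genuine gap precisely at the point you yourself flag as ``the main obstacle.''

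Concretely, two things are missing. First, Theorem \ref{thm_exist_condition} is an existence statement for a prescribed relative Euler class and sign only; it gives no control over the actual boundary holonomy, not even its trace. Gluing representations over a pants decomposition requires the restrictions on adjacent pieces to agree along each curve (after conjugation), hence requires prescribing the hyperbolic boundary element, and for the handle-type (non-separating) gluings in a pants decomposition the Euler class bookkeeping is not even covered by Proposition \ref{prop_additivity}, which is stated only for separating curves. Second, the ``combinatorial case analysis'' distributing the near-extremal Euler class compatibly with $p_-(s)=1$ is exactly the work you defer. The paper resolves both issues at once: it proves Lemma \ref{lem_nonpara}, an inductive construction of a representation of extremal Euler class $-\chi$ on any subsurface sending one prescribed boundary element to a given hyperbolic $\pm C$ and all other peripherals to positive parabolics, and it cuts $\Sigma$ along a single \emph{separating} curve only --- the boundary of a one-holed torus $T$ containing $\gamma$ when $\gamma$ is non-separating (then $e(\phi|_{\pi_1(T)})=1$ makes $\phi|_{\pi_1(T)}$ Fuchsian by Proposition \ref{prop_holonomy}, so $\phi(\gamma)$ is hyperbolic), or along $\gamma$ itself when $\gamma$ separates (then Theorem \ref{thm_exist_condition}, applied on the side containing the negative puncture with sign $0$ at $\gamma$, makes $\phi(\gamma)$ hyperbolic by construction, and Lemma \ref{lem_nonpara} extends over the other side with the correct Euler class). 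Until you either prove an analogue of Lemma \ref{lem_nonpara} or otherwise justify prescribing hyperbolic boundary holonomies and handling non-separating gluings, the key step of your argument is absent.
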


\begin{theorem}\label{thm_tothyp}
        Let $\Sigma = \Sigma_{g,p}$ with $\chi(\Sigma)\leqslant -2$ and $p\geqslant 1$,  and let $n = -\chi(\Sigma) - 1$ and $s\in \{\pm 1\}^p$ with $p_-(s) = 1$. 
        Then every representation $\phi\in \NPns$ is totally hyperbolic.
    \end{theorem}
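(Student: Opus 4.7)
The strategy is to argue by contradiction. Suppose $\phi \in \NPns$ sends some non-peripheral simple closed curve $\gamma$ to an element that is not hyperbolic; since $\phi \in \NPns$ excludes parabolic and $\pm\mathrm{I}$ images on non-peripheral simple closed curves, $\phi(\gamma)$ must be elliptic. The plan is to derive a contradiction by exploiting the extremality $n = -\chi(\Sigma) - 1$ together with the sign constraint $p_-(s) = 1$, which will force $\phi$ to be Fuchsian on a subsurface in which $\gamma$ is non-peripheral.

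The key idea is to extend $\gamma$ to a pants decomposition $\mathcal{P} = \{\gamma, \gamma_2, \dots, \gamma_k\}$ of $\Sigma$ chosen so that (i) $\phi(\gamma_j)$ is hyperbolic for every $j \geq 2$, and (ii) the pair of pants $P_0$ containing the unique puncture of sign $-1$ is not adjacent to $\gamma$. Once such a $\mathcal{P}$ is produced, I split $\Sigma$ along $\gamma_2, \dots, \gamma_k$ and apply Proposition \ref{prop_additivity} iteratively, which is valid since all these curves have hyperbolic image. This yields
\[
e(\phi) \;=\; \sum_i e\bigl(\phi|_{\pi_1(P_i)}\bigr) \;+\; e\bigl(\phi|_{\pi_1(S)}\bigr),
\]
where $S$ is the subsurface formed by the two pants adjacent to $\gamma$ (topologically $\Sigma_{0,4}$ or $\Sigma_{1,1}$, with $\gamma$ non-peripheral in $S$) and the $P_i$ are the remaining pants. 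Applying the generalized Milnor-Wood bound $e(\phi|_{\pi_1(P_i)}) \leq 1 - p_-(P_i)$ and $e(\phi|_{\pi_1(S)}) \leq -\chi(S) - p_-^S$ and summing, using $\sum_i p_-(P_i) + p_-^S = p_-(s) = 1$ with $p_-^S = 0$ from (ii), the upper bound equals exactly $-\chi(\Sigma) - 1 = e(\phi)$. Equality forces every summand to attain its maximum; in particular $e(\phi|_{\pi_1(S)}) = -\chi(S)$, so Proposition \ref{prop_holonomy} gives that $\phi|_{\pi_1(S)}$ is Fuchsian. Since $\gamma$ is a non-peripheral simple closed curve of $S$, the Fuchsian property forces $\phi(\gamma)$ to be hyperbolic, contradicting the assumption.

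The main obstacle is arranging condition (i). Each $\phi(\gamma_j)$ is a priori hyperbolic or elliptic by $\phi \in \NPns$; to exclude the elliptic possibility for all $j \geq 2$, the plan is to use flip moves in the pants complex: if some $\phi(\gamma_j)$ is elliptic, replace $\gamma_j$ by one of the other two essential simple closed curves in the $\Sigma_{0,4}$ or $\Sigma_{1,1}$ subsurface formed by the two pants adjacent to $\gamma_j$. A Fricke-type trace identity on this subsurface, together with $\phi \in \NPns$, should prevent all three candidate curves from simultaneously having non-hyperbolic $\phi$-image, giving a valid replacement. Condition (ii) is combinatorially straightforward once $\chi(\Sigma) \leq -3$; the borderline surfaces $\Sigma_{0,4}$ (already handled by Yang in \cite{yang}) and $\Sigma_{1,2}$, where $S = \Sigma$ is unavoidable for certain $\gamma$, will require separate direct trace-based arguments using the concrete presentation of $\pi_1(\Sigma)$ and the constraint $p_-(s) = 1$.
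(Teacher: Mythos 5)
Your reduction step is fine as far as it goes: if you could produce a pants decomposition containing $\gamma$ in which every other decomposition curve has hyperbolic image and the negatively-signed puncture lies in a pants not adjacent to $\gamma$, then cutting along those curves, applying Proposition \ref{prop_additivity}, and summing the upper bounds from Theorem \ref{thm_exist_condition} does force $e(\phi|_{\pi_1(S)})=-\chi(S)$, and Proposition \ref{prop_holonomy} then makes $\phi(\gamma)$ hyperbolic, a contradiction. The genuine gap is your condition (i). A priori every other decomposition curve can have elliptic image (membership in $\NPns$ only rules out parabolic and central images), and your proposed fix --- flip moves plus ``a Fricke-type trace identity should prevent all three candidate curves from simultaneously having non-hyperbolic image'' --- is unproven and is essentially the crux of the theorem itself: deciding which simple closed curves can simultaneously be elliptic under the constraints $e(\phi)=-\chi(\Sigma)-1$, $p_-(s)=1$ is exactly what has to be established, and there is no elementary trace identity on a one-holed torus or four-holed sphere that forbids all three curves in a flip square from being elliptic (there are plenty of $\mathrm{F}_2$-representations with all of $\phi(\alpha),\phi(\beta),\phi(\alpha\beta)$ elliptic). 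Moreover the boundary curves of the flip subsurface are themselves decomposition curves not yet known to be hyperbolic, so the replacement argument is circular; and you must also preserve $\gamma$, condition (ii), and the Euler-class bookkeeping under each flip. Deferring $\Sigma_{0,4}$ to \cite{yang} and $\Sigma_{1,2}$ to unspecified ``trace-based arguments'' leaves further holes.

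The paper avoids needing any such hyperbolic pants decomposition by working directly with the offending curve $\gamma$. When $\gamma$ is non-separating it takes a dual curve $\delta$ and the one-holed torus $T$ they fill; Goldman's lemma (Lemma \ref{lem_torus}) shows that ellipticity of $\phi(\gamma)$ forces $\mathrm{tr}[\phi(\gamma),\phi(\delta)]\geqslant 2$, hence the lifted commutator lies in $\overline{\Hyp_0}$, so $e(\phi|_{\pi_1(T)})=0$ and additivity plus Theorem \ref{thm_exist_condition} applied to $\Sigma\setminus T$ (which carries the negative puncture) gives the contradiction. When $\gamma$ is separating, additivity cannot even be invoked across $\gamma$ (its image is elliptic), which is why the paper develops the evaluation map and the lifting analysis in $\univcover$ (Lemma \ref{lem_evimage_base}, Proposition \ref{prop_evimage}) to bound the indices of the components $\Ell_{n_i}$ containing $\ev(\phi|_{\pi_1(\Sigma_1)})$ and $\ev(\phi|_{\pi_1(\Sigma_2)})$, concluding $n\leqslant-\chi(\Sigma)-2$. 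Some substitute for this finer analysis (or for Goldman's lemma) is unavoidable; as written, your proposal does not supply it.
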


    The outline of this section is as follows. 
    In Subsection \ref{proof_prop} and Subsection \ref{proof_thm}, we will prove
    Proposition \ref{prop_nonpara} and Theorem \ref{thm_tothyp}, respectively. 
    Then in Subsection \ref{proof_cor}, we will prove Corollary \ref{cor_almost_fuchsian}.

\subsection{Proof of Proposition \ref{prop_nonpara}: $\NPns$ has full measure in $\Rns$}\label{proof_prop}

    To prove Proposition \ref{prop_nonpara}, we need the following Lemma \ref{lem_nonpara}.
    \begin{lemma}\label{lem_nonpara}
        Let $\Sigma = \Sigma_{g,p}$ 
        with $\chi(\Sigma)\leqslant -1$ and $p\geqslant 1$. Let $c_1, c_2,\cdots, c_p$ be \textcolor{black}{the} primitive peripheral elements of $\pi_1(\Sigma)$, and let $\pm C$ be a hyperbolic element of $ \psl$.
        Then there exists a representation $\phi\in \HP(\Sigma)$ of relative Euler class $e(\phi) = -\chi(\Sigma)$, that sends $c_p$ to $\pm C$ and
        all other primitive peripheral elements to positive parabolic elements.
    \end{lemma}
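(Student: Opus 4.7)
The plan is to construct $\phi$ as the holonomy representation of an appropriate complete hyperbolic structure on $\Sigma$ and then conjugate the result into normal form. The key input is Proposition~\ref{prop_holonomy}, which automatically upgrades any such holonomy to a representation with $|e|$ maximal.

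First, I would invoke the classical Fenchel--Nielsen description of the Teichm\"uller space of complete finite-area hyperbolic structures on $\Sigma = \Sigma_{g,p}$ having $p-1$ cusps (at the punctures indexed by $c_1,\dots,c_{p-1}$) and one totally geodesic boundary of any prescribed positive length. Choose the length $\ell > 0$ so that $2\cosh(\ell/2)$ equals the absolute value of the trace of any $\SL$-lift of $\pm C$, and pick any such hyperbolic structure. Let $\phi_0 : \pi_1(\Sigma) \to \psl$ be its holonomy, with the orientation of $\Sigma$ taken compatible with the standard orientation of $\mathbb{H}^2$ and each peripheral loop $c_i$ oriented so the corresponding cusp or geodesic boundary lies on its left.

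Second, I would verify that $\phi_0$ has the required invariants. By construction $\phi_0 \in \HP(\Sigma)$ with $\phi_0(c_i)$ parabolic for $i < p$ and $\phi_0(c_p)$ hyperbolic whose trace equals that of $\pm C$ in absolute value. The left-handed peripheral convention exhibits each $\phi_0(c_i)$, $i<p$, as a positive parabolic: in the upper half-plane model, a cusp at $\infty$ traversed with the cusp on the left has holonomy conjugate to $\pm\begin{bmatrix}1 & 1 \\ 0 & 1\end{bmatrix} \in \Parp$. Since $\phi_0$ is discrete and faithful with convex core homeomorphic to $\Sigma$, Proposition~\ref{prop_holonomy} gives $|e(\phi_0)| = -\chi(\Sigma)$, and the compatible orientation convention selects the sign $e(\phi_0) = -\chi(\Sigma)$.

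Third, I would conjugate $\phi_0$ to place $\phi(c_p)$ exactly at $\pm C$. Hyperbolic conjugacy classes in $\psl$ are parametrized by $|\tr|$ alone, so $\phi_0(c_p)$ is $\psl$-conjugate to $\pm C$; choose $g \in \psl$ with $g\phi_0(c_p)g^{-1} = \pm C$ and set $\phi \doteq g\phi_0 g^{-1}$. Since $\psl$-conjugation preserves both the relative Euler class and the sign, the resulting $\phi$ has $e(\phi) = -\chi(\Sigma)$, $\phi(c_p) = \pm C$, and $\phi(c_i)$ still positive parabolic for each $i<p$, as required.

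The main obstacle in this plan is the careful bookkeeping of the sign conventions: verifying that the compatible-orientation convention truly yields positive parabolics together with Euler class $-\chi(\Sigma)$ under the paper's specific definitions of $\Parp$ and of the generator $z$ of $\pi_1(\psl)$. Should the natural geometric construction produce both signs reversed, reversing the orientation of $\Sigma$---equivalently, applying a $\pgl\setminus\psl$-conjugation via Proposition~\ref{prop_pglpsl}---simultaneously flips the Euler class and the parabolic signs, so one can always arrange the final data to match.
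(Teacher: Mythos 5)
Your proof is correct in substance but takes a genuinely different route from the paper. The paper argues by induction on $\chi(\Sigma)$: it chooses a pair of pants $P$ containing $c_p$, builds $\phi|_{\pi_1(P)}$ by hand using the lifted commutator/product image results (Theorem \ref{thm_liftcommu} and Theorem \ref{thm_prodimage}) so that the lift of $\pm C$ lands in $\Hyp_1$ and the free boundary curves of $P$ go to hyperbolic elements, and then extends over $\Sigma\setminus P$ by the induction hypothesis, computing $e(\phi)=-\chi(\Sigma)$ via additivity (Proposition \ref{prop_additivity}). You instead realize $\phi$ geometrically as the holonomy of a hyperbolic structure with $p-1$ cusps and one geodesic boundary of length matching $|\tr(\pm C)|$, invoke Proposition \ref{prop_holonomy} for $|e(\phi_0)|=-\chi(\Sigma)$, and then conjugate so that $\phi(c_p)=\pm C$ exactly. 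Your approach is shorter and conceptually transparent, but it imports Fenchel--Nielsen existence and hinges on the sign bookkeeping you yourself flag: whether the orientation convention yields positive parabolics together with $e=-\chi(\Sigma)$ under the paper's choices of $\Parp$ and of the generator $z$. Your fallback via Proposition \ref{prop_pglpsl} does close this, but to make it airtight you should add the one missing observation that a \emph{mixed} outcome (say $e(\phi_0)=-\chi(\Sigma)$ with some negative parabolic peripheral) is impossible: since $\phi_0(c_p)$ is hyperbolic, $p_0(s(\phi_0))\geqslant 1$, so the generalized Milnor--Wood inequality of Theorem \ref{thm_exist_condition} forces $p_-(s)=0$ when $e=-\chi(\Sigma)$ and $p_+(s)=0$ when $e=\chi(\Sigma)$; hence the only two possible outcomes are exactly the pair exchanged by the $\pgl\setminus\psl$ flip, and your repair always lands on the required one. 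By contrast, the paper's inductive, lift-based construction avoids any appeal to hyperbolic geometry or orientation conventions and reuses exactly the machinery (subsurface restriction, additivity, product images in $\univcover$) that drives the rest of Section \ref{results}, which is why the paper proves the lemma that way even though it only needs existence.
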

    \begin{proof}
        We will proceed by induction
    on the Euler characteristic $\chi(\Sigma)$.
    \smallskip

    As the base case of the induction, we assume $\chi(\Sigma) = -1$, where $\Sigma$ is either a one-holed torus $\Sigma_{1,1}$ or 
    a three-holed sphere $\Sigma_{0,3}$. 
    Let $\widetilde{C}$ be the lift of $\pm C$ in $\Hyp_1$. 
    If $\Sigma\cong \Sigma_{1,1}$ with $\pi_1(\Sigma) = \langle a, b, c_1 : [a,b] = c_1 \rangle$, then
    by Proposition \ref{thm_liftcommu}, there exists a pair $(\widetilde{A}, \widetilde{B})\in \univcover\times\univcover$ whose commutator $\big[\widetilde{A}, \widetilde{B}\big]$ equals $\widetilde{C}$; and we can define $\phi$ by letting $\phi(a)$ and $\phi(b)$ be the projections of $\widetilde{A}$ and $\widetilde{B}$, respectively, to $\psl$. 
    If $\Sigma\cong \Sigma_{0,3}$ with $\pi_1(\Sigma) = \langle c_1, c_2, c_3 : c_1 c_2 = c_3 \rangle$, 
    then by Theorem \ref{thm_prodimage}, there exists a pair $(\widetilde{A}, \widetilde{B})\in \Par_0^+ \times\Par_0^+$ whose product $\widetilde{A} \widetilde{B}$  equals $\widetilde{C}$; and we can define $\phi$ by letting $\phi(c_1)$ and $\phi(c_2)$ be the projections of $\widetilde{A}$ and $\widetilde{B}$, respectively, to $\psl$. 
    In both cases, $\phi$ sends $c_p$ to the projection of $\widetilde{C}$ to $\psl$, which is $\pm C$.
    Moreover, as $\widetilde{C}\in \Hyp_1$, we have $e(\phi) = 1$; 
    and $\phi$ sends all other primitive peripheral elements of $\pi_1(\Sigma)$ to positive parabolic elements, as desired.
    \smallskip

    Now let $\chi(\Sigma)\leqslant -2$, and assume that the lemma holds for any subsurface of $\Sigma$ with Euler characteristic greater than $\chi(\Sigma)$. Let $\widetilde{C}$ be the lift of $\pm C$ in $\Hyp_1$. 
    We will first choose a pair of pants $P\subset \Sigma$ where $c_p\in \pi_1(P)$ and construct $\phi$ on $\pi_1(P)$, for the cases $p\geqslant 2$ and $p = 1$ separately, then extend $\phi$ to $\pi_1(\Sigma)$.
    
    If $p \geqslant 2$, then we choose $P\subset \Sigma$ such that the complement $\Sigma\setminus P$ is connected. Then 
    $\pi_1(P) = \langle d, c_{p-1}, c_p : d  c_{p-1} = c_p \rangle$, where $d$ is represented by the common boundary component of $P$ and $\Sigma\setminus P$. 
    By Theorem \ref{thm_prodimage}, there exists a pair $(\widetilde{A}, \widetilde{B})\in  \Hyp_0\times\Par^+_0$ whose product $\widetilde{A} \widetilde{B}$  equals $\widetilde{C}$; and we can define $\phi$ on $\pi_1(P)$ by letting $\phi(d)$ and $\phi(c_{p-1})$ be the projections of $\widetilde{A}$ and $\widetilde{B}$, respectively, to $\psl$. 
    
    If $p = 1$, then we choose $P\subset \Sigma$ such that the complement $\Sigma\setminus P$ has two connected components $\Sigma_1$ and $\Sigma_2$. Then 
    $\pi_1(P) = \langle d_1, d_2, c_1 : d_1 d_2 = c_1 \rangle$, where $d_i$ is represented by the common boundary component of $P$ and $\Sigma_i$ for $i\in \{1,2\}$. 
    By Theorem \ref{thm_prodimage}, there exists a pair $(\widetilde{A}, \widetilde{B})\in \Hyp_0 \times\Hyp_0$ whose product $\widetilde{A} \widetilde{B}$  equals $\widetilde{C}$; and we can define $\phi$ on $\pi_1(P)$ by letting $\phi(d_1)$ and $\phi(d_2)$ be the projections of $\widetilde{A}$ and $\widetilde{B}$, respectively, to $\psl$. 
    
    Since each connected component of $\Sigma\setminus P$ has Euler characteristic greater than $\chi(\Sigma)$, and since
    $\phi(d), \phi(d_1),$ and $ \phi(d_2)$ are hyperbolic, we can extend $\phi$ to $\pi_1(\Sigma)$ by defining  $\phi|_{\pi_1(\Sigma\setminus P)}$ using the induction hypothesis. 
    Since $\widetilde{C}\in \Hyp_1$, we have $e\big(\phi|_{\pi_1(P)}\big) = 1$; and by Proposition \ref{prop_additivity}, we have $e(\phi) = e\big(\phi|_{\pi_1(P)}\big)+ e\big(\phi|_{\pi_1(\Sigma\setminus P)}\big) = 1 -\chi(\Sigma\setminus P)= -\chi(\Sigma)$. Moreover, by the definition of $\phi|_{\pi_1(P)}$ and the induction hypothesis, 
    we have $\phi(c_p) = \pm C$ and $\phi(c_i)\in \Par^+$ for $i\in \{1,\cdots, p-1\}$.
    \end{proof}

    We now prove Proposition \ref{prop_nonpara}.
    \begin{proof}[Proof of Proposition \ref{prop_nonpara}]
        For a non-peripheral simple closed curve $\gamma \in \pi_1(\Sigma)$, we let $\mathrm{P}_\gamma\doteq \{[\phi]\in \mathcal{M}^s_n(\Sigma) \big|\ \big|\tr\big(\widetilde{\phi(\gamma)}\big)\big| = 2\}$, 
        where $\tr\big(\widetilde{\phi(\gamma)}\big)$ is the trace of a lift $\widetilde{\phi(\gamma)}$ of $\phi(\gamma)$ in $\SL$.
        Let $\mathrm P$ be the union of $\mathrm{P}_\gamma$ for all non-peripheral simple closed curves $\gamma \in \pi_1(\Sigma)$. Notice that $\NPns/\psl$ is the complement of $\mathrm P$ in $\mathcal{M}^s_n(\Sigma)$.
        We will show that for each $\gamma$, $\mathrm{P}_\gamma$ has measure zero in $\mathcal{M}^s_n(\Sigma)$, hence their countable union $\mathrm P$ has measure zero in $\mathcal{M}^s_n(\Sigma)$. 
        \medskip
        
        To this end, for each $\gamma$, we will construct $\phi\in \Rns$ such that 
        $\phi(\gamma)$ is hyperbolic. As
        $\big|\tr\big(\widetilde{\phi(\gamma)}\big)\big| > 2$, this shows that the complement $\mathcal{M}^s_n(\Sigma)\setminus \mathrm{P}_\gamma$ is non-empty, concluding that the zero set
        $\mathrm{P}_\gamma$ of an analytic function $\tr(\phi(\gamma))^2 - 4$ has measure zero \cite{Mityagin}. 
        \smallskip
     
        If $\gamma$ is non-separating, %represented by a non-separating simple closed curve, 
        then 
        there is a one-holed torus $T\subset \Sigma$ containing $\gamma$, which shares a common boundary $d$ with its complement $\Sigma\setminus T$. %where $d$ represents the common boundary component of $T$ and $\Sigma\setminus T$. 
        Let $c_1,\cdots, c_p$ be \textcolor{black}{the} primitive peripheral elements of $\pi_1(\Sigma)$, chosen for each puncture; and observe that $d$, together with $c_1,\cdots, c_p$, represent the $p+1$ punctures of $\Sigma\setminus T$. 
        Define $s'\in \{-1,0,1\}^{p+1}$ by letting $s'_i=s_i$ for $i\in\{1,\dots, p\}$ and $s'_{p+1} = 0$. 
        Then $p_-(s) = 1$ implies $p_-(s') = 1$, and we have $n-1 = -\chi(\Sigma) - 2 =  -\chi(\Sigma\setminus T) - p_-(s')$.
        By Theorem \ref{thm_exist_condition}, there exists a $\phi\in \HP^{s'}_{n-1}(\Sigma\setminus T)$ that sends $d$ to a hyperbolic element. Applying Lemma \ref{lem_nonpara} to $T$ with $\pm C = \phi(d)$, we can extend $\phi$ to $\pi_1(\Sigma)$
        so that the relative Euler class $e\big(\phi|_{\pi_1(T)}\big) = 1$. 
        By Proposition \ref{prop_additivity}, $e(\phi) = (n-1) + 1 = n$; and $s(\phi) = s$, i.e., $\phi\in \Rns$. 
        Moreover, by Proposition \ref{prop_holonomy}, $\phi|_{\pi_1(T)}$ is a holonomy representation, hence $\phi(\gamma
        )$ is hyperbolic.
        \smallskip

        If $\gamma$ is separating, %represented by a separating simple closed curve, 
        then it separates $\Sigma$ into two subsurfaces 
        $\Sigma_1$ and $\Sigma_2$. 
        Up to a permutation of the peripheral elements, we can assume that $s_1=-1$ and $s_2 = \cdots=s_{p}= +1$. After possibly reindexing, we may further assume that $c_1,\cdots, c_k\in \pi_1(\Sigma_1)$ and $c_{k+1},\cdots, c_p\in \pi_1(\Sigma_2)$ for some $k\in \{1,\cdots, p\}$. 
        %We will first define $\phi$ on $\pi_1(\Sigma_1)$, then extend $\phi$ to the entire $\pi_1(\Sigma)$.
        Observe that $\gamma$, together with $c_1,\cdots, c_k$, represent the $k+1$ punctures of $\Sigma_1$. Define $s'\in \{-1,0,1\}^{k+1}$ by letting $s'_i = s_i$ for $i\in \{1,\cdots, k\}$, and $s'_{k+1} = 0$.
        By Theorem \ref{thm_exist_condition}, there exists a $\phi\in \HP^{s'}_{-\chi(\Sigma_1)-1}(\Sigma_1)$, where $\phi(c_1)$ is negative parabolic, and $\phi(\gamma)$ is hyperbolic. 
    Applying Lemma \ref{lem_nonpara} to $\Sigma_2$ with $\pm C = \phi(\gamma)$, we can extend $\phi$ to $\pi_1(\Sigma)$
    so that the relative Euler class $e\big(\phi|_{\pi_1(\Sigma_2)}\big) = -\chi(\Sigma_2)$. 
    By Proposition \ref{prop_additivity}, we have $e(\phi) = %e\big(\phi|_{\pi_1(\Sigma_1)}\big) + e\big(\phi|_{\pi_1(\Sigma_2)}\big) = 
    \big(-\chi(\Sigma_1) - 1\big) + \big(-\chi(\Sigma_2)\big) = n$; and since $\phi(c_1)\in \Par^-$ and $\phi(c_i)\in \Par^+$ for all $i\in \{2,\cdots, p\}$, $s(\phi) = s$, i.e., $\phi\in \Rns$. 
    \end{proof}

\subsection{Proof of Theorem \ref{thm_tothyp}: total hyperbolicity of representations in $\NPns$}\label{proof_thm}

    We first state
    Lemma \ref{lem_offdiag} and Lemma \ref{lem_offdiag_Ell}, which are needed for the proof of Theorem \ref{thm_tothyp}. 
    The proofs rely on the definitions of $\Par^{\pm}_n$ and $\Ell_n$;
    see \cite{RY} for details.
  \begin{lemma}\label{lem_offdiag}\cite[Lemma 2.13]{RY} 
        For $n\in \mathbb{Z}$ and a parabolic element $\widetilde{A}\in \mathrm{Par}_n$ of $\univcover,$ let  $s(\widetilde{A})\in\{\pm\}$ be its sign, i.e., $s(\widetilde{A})=+$ if $\widetilde{A}\in \mathrm{Par}^+_n,$ and $s(\widetilde{A})=-$ if $\widetilde{A}\in \mathrm{Par}^-_n.$ Let $A$ be the projection of $\widetilde{A}$ to $\SL$, and for $i,j\in\{1,2\}$ let $a_{ij}$ be the $(i,j)$-entry of $A.$ Then either $a_{12}\neq0$ or $a_{21}\neq 0.$  Moreover:
        
        \begin{enumerate}[(1)]
       \item If $n$ is even, then $s(\widetilde{A})=sgn(a_{12})$ if $a_{12}\neq 0,$ and $s(\widetilde{A})=-sgn(a_{21})$ if $a_{21}\neq 0.$
       
    \item If $n$ is odd, then $s(\widetilde{A})=-sgn(a_{12})$ if $a_{12}\neq 0,$  and $s(\widetilde{A})=sgn(a_{21})$ if $a_{21}\neq 0.$
    \end{enumerate}
    \end{lemma}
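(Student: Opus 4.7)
The strategy is a two-step reduction to a matrix computation on the $\SL$-projection, with the parity of $n$ contributing a global sign. First, since the generator $z = \widetilde{\exp}\begin{bmatrix} 0 & \pi \\ -\pi & 0 \end{bmatrix}$ of the center of $\univcover$ has $\SL$-projection $\exp\begin{bmatrix} 0 & \pi \\ -\pi & 0 \end{bmatrix} = -\mathrm{I}$, I would write an arbitrary $\widetilde{A} \in \Par^{\pm}_n = z^n \Par^{\pm}_0$ as $\widetilde{A} = z^n \widetilde{B}$ with $\widetilde{B} \in \Par^{\pm}_0$, so that the $\SL$-projections satisfy $A = (-1)^n B$, hence $a_{12} = (-1)^n b_{12}$ and $a_{21} = (-1)^n b_{21}$. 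This reduces the lemma to determining the $\SL$-images of $\Par^{\pm}_0$ and the signs of $b_{12}, b_{21}$ within those images.

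Second, I would identify the $\SL$-image of $\Par^+_0$ with the $\SL$-conjugacy class of $\parpp$, and that of $\Par^-_0$ with the $\SL$-conjugacy class of $\parmp$. By definition, every $\widetilde{B} \in \Par^{\pm}_0$ projects to some $\exp(N) = \mathrm{I} + N$ for a nonzero nilpotent $N \in \sl$, which has $\SL$-trace $+2$; the trace-$+2$ parabolics form exactly the two disjoint $\SL$-conjugacy classes of $\parpp$ and $\parmp$. Connectedness of $\Par^{\pm}_0$ forces its image into one of these classes, and the representatives $\widetilde{\exp}(\slupper) \in \Par^+_0$ and $\widetilde{\exp}(-\slupper) \in \Par^-_0$ projecting respectively to $\parpp$ and $\parmp$ fix the correspondence.

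Third, a direct computation with $C = \begin{bmatrix} a & b \\ c & d \end{bmatrix} \in \SL$ gives
$$C \parpp C^{-1} = \begin{bmatrix} 1 - ac & a^2 \\ -c^2 & 1 + ac \end{bmatrix}, \qquad C \parmp C^{-1} = \begin{bmatrix} 1 + ac & -a^2 \\ c^2 & 1 - ac \end{bmatrix}.$$
Since $ad - bc = 1$ forces $(a,c) \neq (0,0)$, the off-diagonal entries of $B$ cannot both vanish, and pulling back through $a_{ij} = (-1)^n b_{ij}$ proves the nondegeneracy ``$a_{12} \neq 0$ or $a_{21} \neq 0$''. The signs of $b_{12}, b_{21}$ are read off directly from the two displayed matrices, and combining with the $(-1)^n$ factor yields both parts of the lemma case by case in $n \bmod 2$ and $s(\widetilde{A})$. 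The main obstacle I anticipate is the second step---establishing that $\Par^{\pm}_0$ project \emph{onto} the two specified $\SL$-conjugacy classes, rather than merely into them---since this rests on a careful analysis of the exponential map of $\univcover$ and the topology of its parabolic strata; once secured, the remainder is routine linear algebra.
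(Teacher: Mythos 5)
Your proof is essentially correct, but note that there is nothing in this paper to compare it against: the lemma is quoted from \cite{RY} (Lemma 2.13), and the present paper only remarks that the proof ``relies on the definitions of $\Par^{\pm}_n$'' before deferring to \cite{RY}. Judged as a self-contained derivation from the definitions in Section \ref{Euler}, your argument goes through: the covering homomorphism $\univcover\to\SL$ intertwines $\widetilde{\exp}$ and $\exp$, so $z\mapsto -\mathrm{I}$ and the reduction $A=(-1)^nB$ is legitimate; the conjugation computation correctly gives the sign pattern ($a^2\geqslant 0$ in the $(1,2)$-entry, $-c^2\leqslant 0$ in the $(2,1)$-entry, not both zero since $\det\neq 0$) for the class of $\parpp$, and its mirror for $\parmp$; and combining with $(-1)^n$ yields exactly the two parity cases of the statement. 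Two remarks. First, the obstacle you flag is weaker than you fear: the lemma only requires that $\Par^{+}_0$ project \emph{into} the $\SL$-conjugacy class of $\parpp$ and $\Par^{-}_0$ into that of $\parmp$ (surjectivity is irrelevant, since you only read off entry signs of a given element), and ``into'' follows immediately without connectedness: $\widetilde{B}\in\Par^{+}_0$ projects to an element of $\Parp$, so its $\SL$-image is $\SL$-conjugate to $\parpp$ or to $-\parpp$, and $\tr\exp(N)=+2$ rules out the trace $-2$ alternative. Second, if you prefer to keep the connectedness route, you should add the (easy) justification that the two $\SL$-classes are precisely the connected components of the trace-$+2$ parabolics---they are disjoint, each is connected as a continuous image of $\SL$, and each is closed in their union by your explicit matrix description---while the connectedness of $\Par^{\pm}_0$ itself is already stated in Section \ref{Euler}.
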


    \begin{lemma}\label{lem_offdiag_Ell} \cite[Lemma 7.4]{RY}
        For $n\in \mathbb{Z}$ and an elliptic element $\widetilde{A}\in \mathrm{Ell}_n$ of $\univcover,$ let $A$ be its projection to $\SL$, and for $i,j\in\{1,2\}$ let $a_{ij}$ be the $(i,j)$-entry of $A.$ 
        Then $a_{12}\neq0$ and $a_{21}\neq 0.$  Moreover:
        \begin{enumerate}[(1)]
            \item 
            If $n$ is odd, then $sgn(n)=sgn(a_{12}) = -sgn(a_{21})$.
            
            \item 
            If $n$ is even, then $sgn(n)=-sgn(a_{12}) = sgn(a_{21})$.
        \end{enumerate}
    \end{lemma}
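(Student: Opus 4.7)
The plan is to reduce to a canonical representative of $\Ell_n$ and then perform a direct matrix computation. By the definition of $\Ell_n$, every $\widetilde{A} \in \Ell_n$ is conjugate in $\univcover$ to $\widetilde{A}_\theta \doteq \widetilde{\exp}\begin{bmatrix} 0 & \theta \\ -\theta & 0 \end{bmatrix}$ for some $\theta$ in an explicit open interval: $\theta \in ((n-1)\pi, n\pi)$ if $n>0$, and $\theta \in (n\pi, (n+1)\pi)$ if $n<0$. Since the universal covering map $\univcover \to \SL$ is a group homomorphism, conjugation in $\univcover$ descends to conjugation in $\SL$, so $A = g A_\theta g^{-1}$ for some $g = \abcd \in \SL$, where
$$A_\theta = \begin{bmatrix} \cos\theta & \sin\theta \\ -\sin\theta & \cos\theta \end{bmatrix}$$
is the projection of $\widetilde{A}_\theta$ to $\SL$, obtained from the matrix exponential.

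A direct expansion of $g A_\theta g^{-1}$ yields the off-diagonal entries
$$a_{12} = (a^2+b^2)\sin\theta, \qquad a_{21} = -(c^2+d^2)\sin\theta.$$
Because $\det(g) = ad-bc = 1$, neither row of $g$ can be zero, so $a^2+b^2 > 0$ and $c^2+d^2 > 0$. Moreover $\theta$ lies in an interval disjoint from $\pi\mathbb{Z}$, so $\sin\theta \neq 0$. Hence $a_{12} \neq 0$, $a_{21} \neq 0$, and
$$sgn(a_{12}) \;=\; sgn(\sin\theta) \;=\; -\,sgn(a_{21}).$$

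It remains to express $sgn(\sin\theta)$ in terms of $n$, which is the only genuine bookkeeping in the argument. The definition of $\Ell_n$ uses asymmetric interval conventions on the two signs of $n$ (due to the absence of $\Ell_0$), so this step requires a short four-way case split. For $n>0$, writing $\theta = (n-1)\pi + \alpha$ with $\alpha\in(0,\pi)$ gives $\sin\theta = (-1)^{n-1}\sin\alpha$, so $sgn(\sin\theta) = (-1)^{n-1}$; for $n<0$, writing $\theta = n\pi + \alpha$ with $\alpha\in(0,\pi)$ gives $sgn(\sin\theta) = (-1)^n$. Checking the sign of $n$ and the parity separately unifies these into
$$sgn(\sin\theta) \;=\;
\begin{cases}
\phantom{-}sgn(n), & n \text{ odd},\\
-\,sgn(n), & n \text{ even},
\end{cases}$$
which, combined with the display above, immediately yields the two cases of the lemma. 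The main obstacle is thus purely organizational; beyond the case split, the proof is a one-line matrix calculation followed by elementary trigonometric sign analysis.
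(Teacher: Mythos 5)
Your proof is correct: the conjugation formula gives $a_{12}=(a^2+b^2)\sin\theta$ and $a_{21}=-(c^2+d^2)\sin\theta$, both nonzero since $\det g=1$ forces each row of $g$ to be nonzero and $\theta\notin\pi\mathbb{Z}$, and your four-way check of $sgn(\sin\theta)$ against the asymmetric interval conventions for $\Ell_n$ ($\theta\in((n-1)\pi,n\pi)$ for $n>0$, $\theta\in(n\pi,(n+1)\pi)$ for $n<0$) matches both cases of the statement. The paper itself gives no proof --- it defers to \cite[Lemma 7.4]{RY} with the remark that the argument ``relies on the definitions of $\Par^{\pm}_n$ and $\Ell_n$'' --- and your computation is precisely that intended argument, carried out in full.
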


    The proof of Theorem \ref{thm_tothyp} will be based on Lemma \ref{lem_evimage_base} and Proposition \ref{prop_evimage} below. 
    See also Theorem \ref{thm_prodimage} for comparison.
    \begin{lemma}\label{lem_evimage_base} Let  $m: \univcover\times \univcover
    \to \univcover$ defined by $m(\widetilde{A}, \widetilde{B}) = \widetilde{A}\widetilde{B}$, and let  $s\in \{\pm 1\}$.
    \begin{enumerate}[(1)] 
    
    %\item $m\big(\Hyp_0 \times\Hyp_0\big)\cap \big(\bigcup_{k\in \mathbb{Z}\setminus \{0\}}\Ell_k\big) \subset \Ell_{-1}\cup \Ell_1$.

    \item $m\big(\Hyp_0 \times\Par^{sgn(s)}_0\big)\cap \big(\bigcup_{k\in \mathbb{Z}\setminus \{0\}}\Ell_k\big) \subset \Ell_{s}$.
    
    %\item $m\big(\Hyp_0 \times\Par^+_0\big)\cap \big(\bigcup_{k\in \mathbb{Z}\setminus \{0\}}\Ell_k\big) \subset \Ell_1$.

    \item $m\big(\Hyp_0 \times\Ell_1\big)\cap \big(\bigcup_{k\in \mathbb{Z}\setminus \{0\}}\Ell_k\big) \subset \Ell_1$. 
    %and $m\big(\Ell_1 \times \Hyp_0\big)\cap \big(\bigcup_{k\in \mathbb{Z}\setminus \{0\}}\Ell_k\big) \subset \Ell_1$.

    \item $m\big(\Ell_{-1} \times\Ell_1\big)\cap \big(\bigcup_{k\in \mathbb{Z}\setminus \{0\}}\Ell_k\big) \subset \Ell_{-1}\cup \Ell_1$.
    \end{enumerate}
    \end{lemma}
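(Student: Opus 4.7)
The plan is to prove all three parts by the same continuous-deformation technique. Since every set in play is invariant under $\univcover$-conjugation, I would first reduce one of the two factors to a convenient canonical representative, and then express the other factor as $\widetilde{\exp}(X)$, so that $\widetilde{A}_t = \widetilde{\exp}(tX)$ for $t\in[0,1]$ defines a continuous path from $\mathrm{I}$ to that factor. The induced path $\widetilde{C}_t = \widetilde{A}_t\widetilde{B}$ in $\univcover$ is then tracked using two pieces of data: the $\SL$-trace $\tr C_t$, whose crossings of $\pm 2$ mark transitions among elliptic, parabolic, and hyperbolic regions, and the signs of the off-diagonal entries $c_{12}(t), c_{21}(t)$, which via Lemmas \ref{lem_offdiag} and \ref{lem_offdiag_Ell} pin down the specific component within each type.

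For part (1), after reducing to $\widetilde{B} = \widetilde{\exp}(\slupper)$ (the $s=+1$ case) and writing $\widetilde{A}=\widetilde{\exp}(X)$ with $X$ hyperbolic of eigenvalue $\mu$ and lower-left entry $w$, a direct calculation gives $\tr C_t = 2\cosh(t\mu) + (w/\mu)\sinh(t\mu)$ and $c_{21}(t) = (w/\mu)\sinh(t\mu)$. The elliptic condition forces $w<0$, so $c_{21}(t)<0$; by Lemma \ref{lem_offdiag_Ell} this already restricts the putative elliptic component $\Ell_k$ to odd positive or even negative $k$. Since $\widetilde{C}_0=\widetilde{B} \in \Par^+_0 \subset \partial \Ell_1$ and the trace is strictly decreasing at $t=0$, the path enters $\Ell_1$ at $t=0^+$. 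A case analysis on the three asymptotic regimes $w\in(-2\mu,0)$, $w=-2\mu$, and $w<-2\mu$ then shows that once the path leaves $\Ell_1$, it either returns to $\Hyp_0$ via $\Par^+_0$ or enters $\Hyp_1$ via $\Par^-_1$ and never returns to the elliptic locus, so $k=1$. The $s=-1$ case is symmetric.

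For parts (2) and (3), the starting point of the path is already elliptic, and the main task is to exclude transitions into distant elliptic components. The crucial rigidity I would invoke is that the path cannot pass through any central element $z^n$ with $n\ne 0$: if $\widetilde{C}_{t^*}=z^n$, then the deformed factor at time $t^*$ equals $z^n$ times the inverse of the fixed elliptic factor, which lies in some $\Ell_j$ with $j$ different from the index of the deformed factor's natural component, contradicting the fact that this deformed factor lies either in $\overline{\Hyp_0}$ (part (2)) or on the elliptic one-parameter subgroup through $\mathrm{I}$, which traverses only $\Ell_1$ in the relevant parameter range (part (3)). This blocks all deck-translation shortcuts and forces every transition between elliptic components to pass through an adjacent parabolic set. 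In part (2), starting in $\Ell_1$ with the only available exits through $\Par^+_0$ or $\Par^-_1$, sign tracking of $c_{21}(t)$ shows that any re-entry into elliptic territory lands again in $\Ell_1$; in part (3), starting in $\Ell_{-1}$, the analogous analysis allows re-entry into $\Ell_{-1}$ or $\Ell_1$ but not into $\Ell_{\pm 2}$.

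The main obstacle I anticipate is part (3), where the trace function is built from $\cos(t\nu)$ and $\sin(t\nu)$ rather than from $\cosh, \sinh$, and is therefore periodic in $t$, in principle allowing multiple crossings of $\pm 2$. The most delicate step is to verify that if the path exits $\Ell_1$ through $\Par^-_1$ into $\Hyp_1$, it cannot then traverse $\Hyp_1$ and emerge through $\Par^+_1$ into $\Ell_2$, and analogously on the $\Hyp_{-1}$ side. Ruling this out requires carefully combining the periodic trace dynamics with the sign tracking of $c_{21}(t)$ across each crossing, in the spirit of the case analysis used for part (1).
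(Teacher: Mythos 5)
Your overall strategy (deform along a one-parameter subgroup, track the $\SL$-trace and the off-diagonal signs via Lemmas \ref{lem_offdiag} and \ref{lem_offdiag_Ell}, then use connectivity) is the same kind of argument as the paper's; the main structural difference is that you freeze one factor and move only the other, so your path starts at $\widetilde{B}$ rather than at $\mathrm I$ (the paper deforms both factors simultaneously, which is how it defines the lifted product). For parts (1) and (2) this works: in (1) the single sign $c_{21}(t)=\tfrac{w}{\mu}\sinh(t\mu)<0$ already excludes $\Ell_{-1}$ and $\Ell_{2}$ by Lemma \ref{lem_offdiag_Ell}, so the path, starting in $\overline{\Hyp_0}$, is trapped in $\overline{\Hyp_0}\cup\Ell_1\cup\overline{\Hyp_1}$ and an elliptic endpoint must lie in $\Ell_1$ --- you do not actually need the asymptotic case analysis of $w$ versus $-2\mu$, though that analysis is also correct; and in (2) the same separation argument starting from $\widetilde{B}\in\Ell_1$ gives the claim.

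Part (3) is where there is a genuine gap. First, the tool you propose there does not exist: with $A_t$ a rotation by $\theta t$, $\theta\in(-\pi,0)$, and $B=\begin{bmatrix} a& b\\ c& d\end{bmatrix}$ fixed elliptic, the entry $c_{21}(t)=-a\sin(\theta t)+c\cos(\theta t)$ has no constant sign (its sign depends on $a$ and on whether $\theta t$ has passed $-\pi/2$), so ``sign tracking of $c_{21}(t)$'' cannot be combined with anything to exclude the $\Ell_1\to\Hyp_1\to\Ell_2$ (or mirror) traversal --- precisely the step you flag as delicate and leave unproven. Your centrality rigidity (the path avoids $z^n$, $n\neq 0$) is correct but irrelevant to this danger, since a crossing into $\Ell_{\pm 2}$ would go through $\Par_{\pm1}\cup\Hyp_{\pm1}$, not through the center. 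The missing observation, which is exactly how the paper closes this case, is a trace bound: $\tr(A_tB)=(a+d)\cos(\theta t)+(c-b)\sin(\theta t)$, and since $|a+d|<2$, $c-b<0$ and $\sin(\theta t)\leqslant 0$, one gets $\tr(A_tB)>-2$ for all $t$. Hence the path never meets the trace $-2$ locus $\Par_{\pm1}\cup\{z^{\pm1}\}$ (and in particular never enters $\Hyp_{\pm1}$, where the trace is $<-2$), so it stays in the component $\Ell_{-1}\cup\overline{\Hyp_0}\cup\Ell_1$ containing its starting point, and an elliptic endpoint lies in $\Ell_{-1}\cup\Ell_1$. With this substitution part (3) closes; as written, your proposal identifies the critical step but does not prove it, and the mechanism you intended to use for it would fail.
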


    \begin{proof}%[Proof of Lemma \ref{lem_evimage_base}]
        
        For $\widetilde{A}, \widetilde{B}\in \Ell_{-1}\cup \overline{\Hyp_0}\cup \Ell_1$, recall that the product $m(\widetilde{A}, \widetilde{B}) = \widetilde{A}\widetilde{B}$ is defined as follows. 
        As $\widetilde{A}$ and $\widetilde{B}$ lie in $\Ell_{-1}\cup \overline{\Hyp_0}\cup \Ell_1$, 
        each of them generates a one-parameter subgroup of $\univcover$. 
        Let $\{\widetilde{A_t}\}\interval$ be a path connecting $\mathrm{I}$ and $\widetilde{A}$ in the one-parameter subgroup of $\univcover$ generated by $\widetilde{A},$ and let 
       $\{\widetilde{B_t}\}\interval$ be a path connecting $\mathrm{I}$ and $\widetilde{B}$ in the one-parameter subgroup of $\univcover$ generated by $\widetilde{B}.$ 
       Let %$A$ and $B$ respectively be the projections of $\widetilde{A}$ and $\widetilde{B}$ to $\SL$, and let 
       $\{A_t\}\interval$ and $\{B_t\}\interval$ respectively be the projections of $\{\widetilde{A_t}\}\interval$ and $\{\widetilde{B_t}\}\interval$ to $\SL$.
        Letting
        $
        \{
        \widetilde{A_tB_t}
        \}\interval
        $ 
        be the lift of the path $\{A_tB_t\}_{t\in [0,1]}$ of $\SL$ to
        $\univcover$ starting from $\mathrm{I}$, the product $\widetilde{A}\widetilde{B}$ is defined by its endpoint $\widetilde{A_1B_1}$.
        \medskip
        
        For (1), 
        we let $\big(\widetilde{A}, \widetilde{B}\big)\in \Hyp_0 \times\Par^{sgn(s)}_0$, and assume that their product $m\big(\widetilde{A}, \widetilde{B}\big) = \widetilde{A}\widetilde{B}$ is elliptic in $\univcover$.
        As $\widetilde{A}\in \Hyp_0$,
        up to an $\SL-$conjugation, we can assume that 
        $$A_1= \begin{bmatrix}
        e^\lambda & 0 \\
        0 & e^{-\lambda}
        \end{bmatrix}$$ for some $\lambda>0$; and by a re-parametrization if necessary, we can assume that 
        $$A_t= \begin{bmatrix}
        e^{\lambda t} & 0 \\
        0 & e^{-\lambda t}
        \end{bmatrix}$$
        for all $t\in [0,1]$.
        Letting $B_t = 
        \begin{bmatrix}
            a_t & b_t \\
            c_t & d_t
        \end{bmatrix}$ 
        with trace $\tr(B_t) = a_t+d_t = 2$
        for all $t\in [0,1],$
        their product equals to 
        $$A_tB_t = 
        \begin{bmatrix}
            a_te^{\lambda t} & b_te^{\lambda t} \\
            c_te^{-\lambda t} & d_te^{-\lambda t}
        \end{bmatrix}.$$
        \textcolor{black}{If $s = +1$, then 
        by Lemma \ref{lem_offdiag}, we have $b_t\geqslant 0$ and $c_t\leqslant 0$ for all $t\in [0,1]$, hence $(A_tB_t)_{12}  \geqslant 0$ and $(A_tB_t)_{21} \leqslant 0$.
        By Lemma \ref{lem_offdiag_Ell}, the path $\{\widetilde{A_tB_t}\}\interval$ intersects neither $\Ell_{-1}$ nor $\Ell_2$, hence lies in $\overline{\Hyp_0}\cup \overline{\Hyp_1}\cup \Ell_1$ as it is the connected component 
        of $\univcover\setminus \big(\Ell_{-1}\cup \Ell_2\big)$ that contains $\widetilde{A_0B_0} = \mathrm I$. (See Figure \ref{fig: Lemma_3.6}.)} As $\widetilde{A_1B_1} = \widetilde{A}\widetilde{B}$ is elliptic, we have $\widetilde{A_1B_1}\in \Ell_1$. 
        Similarly, 
        if $s = -1$, then
        by Lemma \ref{lem_offdiag}, we have $b_t\leqslant 0$ and $c_t\geqslant 0$, hence $(A_tB_t)_{12} = b_te^{\lambda t} \leqslant 0$ and $(A_tB_t)_{21} = c_te^{-\lambda t} \geqslant 0$ for all $t\in [0,1]$. 
        Then by Lemma \ref{lem_offdiag_Ell}, the path $\{\widetilde{A_tB_t}\}\interval$ intersects neither $\Ell_1$ nor $\Ell_{-2}$, hence lies in $\overline{\Hyp_{-1}}\cup \overline{\Hyp_0}\cup \Ell_{-1}$ as it is the connected component 
        of $\univcover\setminus \big(\Ell_1\cup \Ell_{-2}\big)$ that contains $\widetilde{A_0B_0} = \mathrm I$. 
        As $\widetilde{A_1B_1} = \widetilde{A}\widetilde{B}$ is  elliptic, we have $\widetilde{A_1B_1}\in \Ell_{-1}$.

        For (2), we let $\big(\widetilde{A}, \widetilde{B}\big)\in \Hyp_0 \times\Ell_1$, and assume that $m\big(\widetilde{A}, \widetilde{B}\big)$ is elliptic in $\univcover$.
        Similar to (1), 
        we can assume 
        $A_t= \begin{bmatrix}
        e^{\lambda t} & 0 \\
        0 & e^{-\lambda t}
        \end{bmatrix}$ and $B_t = 
        \begin{bmatrix}
            a_t & b_t \\
            c_t & d_t
        \end{bmatrix}$ 
        with trace $\tr(B_t) = a_t+d_t \in (-2,2)$
        for all $t\in [0,1].$
        By Lemma \ref{lem_offdiag_Ell}, $b_t\geqslant 0$ and $c_t\leqslant 0$ for all $t\in [0,1]$, and the rest of the proof follows verbatim the previous case $s = +1$ in (1). %Then the second part of (2) also follows, as for $\big(\widetilde{A}, \widetilde{B}\big)\in \Ell_1\times \Hyp_0$, $\widetilde{A}\widetilde{B} = \widetilde{A}\big(\widetilde{B}\widetilde{A}\big)\widetilde{A}^{-1}$ and $\widetilde{B}\widetilde{A}$ lie in the same component $\Ell_k$. 

        \begin{figure}[hbt!]
        \centering
        \begin{overpic}[width=0.9\textwidth]{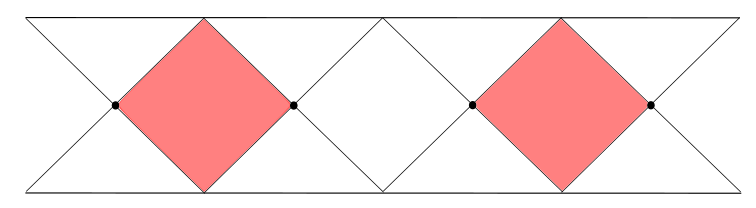}
        %Alt text: A diagram of the connected open subspace of the universal cover of $\mathrm{PSL}(2,\mathbb{R}), which is a union of $\mathrm{Hyp}_n$ for $n\in \{-1,0,1\}$, $\mathrm{Ell}_1$ and $\mathrm{Ell}_{-1}$,  $\mathrm{Par}^-_{1}$, $\mathrm{Par}^+_{-1}$, $\mathrm{Par}_{0}$ and $I$.

        %center
            %\put(20,17){\textcolor{black}{$z^{-1}$}}

            \put(14,15){\textcolor{black}{$z^{-1}$}}
            
            \put(38.5,15){\textcolor{black}{$\mathrm{I}$}}

            \put(62,15){\textcolor{black}{$z$}}

            \put(85,15){\textcolor{black}{$z^2$}}

            %\put(79,17){\textcolor{black}{$z$}}

        %Hyp_n
        \put(13,3){\textcolor{black}{$\Hyp_{-1}$}}

            \put(37,3){\textcolor{black}{$\Hyp_{0}$}}
            
            \put(60,3){\textcolor{black}{$\Hyp_{1}$}}

            \put(84,3){\textcolor{black}{$\Hyp_{2}$}}

        %Par_n
            %\put(12,25){\textcolor{black}{$\Par^-_{-1}$}}

            \put(18,23){\textcolor{black}{$\Par^+_{-1}$}}
            
            \put(29,23){\textcolor{black}{$\Par^-_0$}}

            \put(42.5,23){\textcolor{black}{$\Par^+_0$}}

            \put(53,23){\textcolor{black}{$\Par^-_1$}}

            \put(66,23){\textcolor{black}{$\Par^+_{1}$}}

            \put(77,23){\textcolor{black}{$\Par^-_{2}$}}

            %\put(85,25){\textcolor{black}{$\Par^+_1$}}

        %Ell_n
        %\put(7,14){\textcolor{red}{$\Ell_{-2}$}}
        
        \put(25,13){\textcolor{black}{$\Ell_{-1}$}}

        \put(49,13){\textcolor{black}{$\Ell_{1}$}}

        \put(72.5,13){\textcolor{black}{$\Ell_2$}}

        %\put(89,14){\textcolor{red}{$\Ell_2$}}
        \end{overpic}
        \caption{\label{fig: Lemma_3.6} $\Ell_{-1}$ and $\Ell_{2}$}
    \end{figure}

        For (3), we let $\big(\widetilde{A}, \widetilde{B}\big)\in \Ell_{-1} \times\Ell_1$, and assume that $m\big(\widetilde{A}, \widetilde{B}\big)$ is elliptic in $\univcover$.
        %it suffices to show that $m(\Ell_{-1}\times \Ell_1)= \Ell_{-1}\cup \Ell_1$.Indeed, as $\Ell_1 = z\Ell_{-1}$ where $z$ is the generator of the center $Z(\univcover)$, $m\big(\Ell_1 \times\Ell_1\big) = m\big(z\Ell_{-1}\times \Ell_1\big) = z\Big(m\big(\Ell_{-1}\times \Ell_1\big)\Big)$.
\begin{comment}
        As in the previous paragraph, let $A$ and $B$ respectively be the projections of $\widetilde{A}$ and $\widetilde{B}$ to $\SL$, let $\{A_t\}\interval$ (resp. $\{B_t\}\interval$) be a path connecting $\mathrm{I}$ and $A$(resp. $B$)  in the one-parameter subgroup of $\SL$,
        %and let 
        $
        \{
        \widetilde{A_tB_t}
        \}\interval
        $ 
        be the lift of $\{A_tB_t\}_{t\in [0,1]}$ 
        in $\widetilde{\SL}$ starting from $\mathrm{I}$ and ending at $A_1B_1$.
\end{comment}
        As $\widetilde{A}\in \Ell_{-1}$,
        up to an $\SL-$conjugation, we can assume that $$A_1=  \elltheta$$ for some $\theta\in (-\pi, 0)$; and by a re-parametrization if necessary, we can assume that 
        $$A_t= \begin{bmatrix}
        \cos(\theta t) & \sin(\theta t) \\
        -\sin(\theta t) & \cos(\theta t)
        \end{bmatrix}$$
        for all $t\in [0,1].$
        Letting
        $B_t = 
        \begin{bmatrix}
            a_t & b_t \\
            c_t & d_t
        \end{bmatrix}$ 
        for all $t\in [0,1],$
        their product equals to 
        $$A_tB_t = 
        \begin{bmatrix}
            a_t\cos(\theta t)+c_t\sin(\theta t) & b_t\cos(\theta t)+d_t\sin(\theta t) \\
            c_t\cos(\theta t) - a_t\sin(\theta t) & d_t\cos(\theta t) - b_t\sin(\theta t)
        \end{bmatrix},$$
        where the trace $\tr(A_tB_t) = (a_t + d_t)\cos(\theta t)+(c_t- b_t)\sin(\theta t)$. 
        Since $a_t + d_t = \tr(B_t)  \in (-2,2)$, we have $(a_t + d_t)\cos(\theta t)\in (-2,2)$. Moreover, 
        by Lemma \ref{lem_offdiag_Ell}, $b_t>0$  and $c_t<0$ for all $t\in (0,1]$, hence
        $c_t- b_t < 0$; and since $\sin(\theta t ) < 0$, we have $(c_t- b_t)\sin(\theta t ) > 0$. Therefore, the trace $\tr(A_tB_t) > -2$. This implies that 
        $\widetilde{A_tB_t}$ never passes $\Par_{-1}^+\cup \Par_1^-\cup \{z^{\pm 1}\}$, hence lies in $\Ell_{-1}\cup\overline{\Hyp_0}\cup \Ell_1$ as it is the connected component 
            of $\univcover\setminus \big(\Par_{-1}^+\cup \Par_1^-\cup \{z^{\pm 1}\}\big)$ that contains $\widetilde{A_0B_0} = \mathrm I$. 
            \textcolor{black}{(See Figure \ref{fig: Lemma_3.6}.)}           As $\widetilde{A_1B_1} = \widetilde{A}\widetilde{B}$ is elliptic, we have $\widetilde{A_1B_1}\in \Ell_{-1}\cup\Ell_1$. 
            \end{proof}

        When a surface $\Sigma$ has Euler characteristic $\chi(\Sigma) = -1$, 
        recall from Subsection \ref{ev} 
        that the relative Euler class of $\phi\in \HP(\Sigma)$ 
        can be determined by evaluating
        the commutator and the product maps on $\univcover\times \univcover$. 
        As a generalization to \textcolor{black}{a surface $\Sigma$ with}
        arbitrary Euler characteristic $\chi(\Sigma)\leqslant -1$, we introduce the \emph{evaluation map} $\ev: \hom\to \univcover$, which is defined as follows. 
        Let $\Sigma = \Sigma_{g,p}$ with $p\geqslant 1$ and $\chi(\Sigma)\leqslant -1$, whose fundamental group has the presentation 
        $$\fund = \langle a_1,b_1,\cdots, a_g, b_g, c_1,\cdots, c_p \ |\ [a_1,b_1]\cdots[a_g, b_g]c_1\cdots c_p \rangle.$$
        Let $\phi\in \hom$.
        For $j\in \{1,\cdots, g\}$, let $\widetilde{\phi(a_j)}$ and $\widetilde{\phi(b_j)}$ be arbitrary lifts of $\phi(a_j)$ and $\phi(b_j)$, respectively, in $\univcover$; and for $i\in \{1,\cdots, p-1\}$, let $\widetilde{\phi(c_i)}$ be the unique lift of $\phi(c_i)$ in $\overline{\Hyp_0}\cup \Ell_1$.
        Then the evaluation $\ev(\phi)$ of $\phi$ is defined by
        $$\ev(\phi)\doteq [\widetilde{\phi(a_1)},\widetilde{\phi(b_1)}]\cdots[\widetilde{\phi(a_g)}, \widetilde{\phi(b_g)}]\widetilde{\phi(c_1)}\cdots \widetilde{\phi(c_{p-1})}.$$
        
        The following Proposition \ref{prop_evimage} describes the image of the evaluation map in $\univcover$.
        %See also Theorem \ref{thm_liftcommu} and Theorem \ref{thm_prodimage} for comparison.
        \begin{proposition}\label{prop_evimage}
        Let $\Sigma = \Sigma_{g,p}$ with $p\geqslant 1$,
        and let $\phi\in \hom$ be a representation that maps $c_1,\cdots, c_{p-1}$ to positive parabolic elements. Assume further that $\phi$ sends $c_p$ and every non-peripheral simple closed curve in $\pi_1(\Sigma)$ to either a hyperbolic or an elliptic element.
        %For $j\in \{1,\cdots, g\}$, let $\widetilde{\phi(a_j)}$ and $\widetilde{\phi(b_j)}$ be arbitrary lifts of $\phi(a_j)$ and $\phi(b_j)$ to $\univcover$, and $\widetilde{\phi(c_1)}\cdots \widetilde{\phi(c_{p-1})}$ be the lifts of $\phi(c_1)\cdots \phi(c_{p-1})$ to $\Par_0$. 
        Then the evaluation $\ev(\phi)$
        lies in \textcolor{black}{$\Hyp_n \cup \Ell_n$} for some $n\in \mathbb{Z}$ \textcolor{black}{satisfying} $1-2g\leqslant n\leqslant 2g + p - 2$. 
    \end{proposition}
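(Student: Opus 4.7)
The plan is to track a ``level'' $n_k \in \mathbb{Z}$ of the partial product $R_k$ obtained by multiplying in the factors of $\ev(\phi)$ one at a time. Setting $R_0 = \mathrm{I}$, $R_k = R_{k-1} \cdot [\widetilde{\phi(a_k)}, \widetilde{\phi(b_k)}]$ for $1 \leqslant k \leqslant g$, and $R_{g+j} = R_{g+j-1} \cdot \widetilde{\phi(c_j)}$ for $1 \leqslant j \leqslant p-1$, we have $R_{g+p-1} = \ev(\phi)$. The key observation is that each $R_k$ is a lift of $\phi(\gamma_k)$, where $\gamma_k \in \fund$ is the boundary of the subsurface carrying the first $\min(k,g)$ handles together with the punctures indexed by $c_1, \ldots, c_{\max(0, k-g)}$; a short Euler-characteristic count shows that $\gamma_k$ is either a non-peripheral simple closed curve on $\Sigma$ or is conjugate to $c_p^{-1}$, so the hypothesis forces $\phi(\gamma_k)$ to be hyperbolic or elliptic, and hence $R_k \in \Hyp_{n_k} \cup \Ell_{n_k}$ for some $n_k \in \mathbb{Z}$. (The sole exception is $g = 0, k = 1$, where $R_1 = \widetilde{\phi(c_1)} \in \Par^+_0$; there one simply begins the bookkeeping at $R_2 \in \Hyp_1 \cup \Ell_1$, which is immediate from Theorem \ref{thm_prodimage} with $s_1 = s_2 = +1$.)

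Next I would bound $n_{k+1} - n_k$ at each step. For a commutator step, Theorem \ref{thm_liftcommu} combined with the assumption that $\phi([a_{k+1}, b_{k+1}])$ is hyperbolic or elliptic places the new commutator in $\Hyp_{-1} \cup \Ell_{-1} \cup \Hyp_0 \cup \Ell_1 \cup \Hyp_1$, contributing a level in $\{-1, 0, 1\}$; writing $R_k$ and this commutator as $z^\mu$ times a ``reduced'' factor in $\Hyp_0 \cup \Ell_{\pm 1}$, Theorem \ref{thm_prodimage}(1) and (3), together with the analogous analysis when an elliptic reduced factor appears (in the style of the proof of Lemma \ref{lem_evimage_base}), show that the product of the two reduced factors lies in $\Hyp_{-1} \cup \Hyp_0 \cup \Hyp_1 \cup \Ell_{-1} \cup \Ell_1$, contributing a further shift in $\{-1, 0, 1\}$, so that $n_{k+1} - n_k \in \{-2, -1, 0, 1, 2\}$. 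For a positive parabolic step, $\widetilde{\phi(c_j)} \in \Par^+_0$ has level $0$, and Theorem \ref{thm_prodimage}(2) together with Lemma \ref{lem_evimage_base}(1), plus the analogous ``positivity'' statements for $\Ell_{\pm 1} \cdot \Par^+_0$, show that the level shift lies in $\{0, 1\}$; crucially, the shift is always non-negative, a fact that encodes the orientation of $\Par^+_0$ and is the mechanism behind the tight lower bound.

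Combining the two estimates, the $g$ commutator steps move the level from $n_0 = 0$ into $[1-2g,\, 2g-1]$ (the very first step contributing only $\{-1, 0, 1\}$ since $R_0 = \mathrm{I}$ is central), and the subsequent $p-1$ positive parabolic steps add an element of $[0,\, p-1]$, so that $n_{g+p-1} \in [1-2g,\, 2g+p-2]$ as claimed. The main obstacle I anticipate is making the positive-parabolic step bound fully rigorous when the reduced factor lies in $\Ell_{\pm 1}$: ruling out level shifts of $-1$ or $+2$ for $\Ell_{\pm 1} \cdot \Par^+_0$ is not directly covered by the cited statements, and I would address it by a path-trace analysis in the spirit of the proof of Lemma \ref{lem_evimage_base}, invoking the sign conditions of Lemma \ref{lem_offdiag} and Lemma \ref{lem_offdiag_Ell} to pin down the correct $\Ell_n$ containing the endpoint of the one-parameter family connecting $\mathrm{I}$ to the product.
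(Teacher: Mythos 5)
Your skeleton (partial products $R_k$ lifting the boundaries $\phi(\gamma_k)$ of nested subsurfaces, each in some $\Hyp_{n_k}\cup\Ell_{n_k}$ by the hypothesis) is essentially the paper's induction unrolled, but you are missing the move that makes the paper's bookkeeping feasible: the paper first disposes of the case where the top-level evaluation is hyperbolic by a soft argument --- there $\phi(c_p)$ is hyperbolic, so the relative Euler class is defined, Theorem \ref{thm_exist_condition} (the generalized Milnor--Wood inequality for sign $(+1,\dots,+1,0)$) gives $1-2g\leqslant e(\phi)\leqslant 2g+p-2$, and $\ev(\phi)=z^{e(\phi)}\widetilde{\phi(c_p)}^{-1}\in\Hyp_{e(\phi)}$ --- and then runs the induction only in the case where the product being formed is \emph{elliptic}. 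This is why the elliptic-part statements of Theorem \ref{thm_prodimage} and Lemma \ref{lem_evimage_base} suffice there. Your sequential scheme, by contrast, must control every intermediate product whether it is hyperbolic or elliptic, and so it needs the \emph{hyperbolic} parts of $m(\Hyp_0\times\Ell_{\pm1})$, $m(\Ell_{\pm1}\times\Par_0^+)$, $m(\Ell_{\pm1}\times\Ell_{\pm1})$, etc., none of which are covered by the results you cite (Theorem \ref{thm_prodimage} gives hyperbolic images only for $\Hyp\times\Hyp$, $\Par\times\Hyp$, $\Par\times\Par$; Lemma \ref{lem_evimage_base} gives only elliptic images). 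You flag only the $\Ell_{\pm1}\cdot\Par_0^+$ case and defer it to an ``analogous analysis''; since the bounds $1-2g$ and $2g+p-2$ are sharp, these containments carry the entire content of the proposition and cannot be left as anticipated obstacles.

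Moreover, one containment you assert is simply false: the product of two reduced factors in $\Hyp_0\cup\Ell_{\pm1}$ need not lie in $\Hyp_{-1}\cup\Hyp_0\cup\Hyp_1\cup\Ell_{-1}\cup\Ell_1$. For instance, $\widetilde{A}=\widetilde{B}=\widetilde{\exp}\begin{bmatrix}0&2\pi/3\\-2\pi/3&0\end{bmatrix}\in\Ell_1$ have product $\widetilde{\exp}\begin{bmatrix}0&4\pi/3\\-4\pi/3&0\end{bmatrix}\in\Ell_2$. Relatedly, your level arithmetic treats ``$z^\mu$ times a reduced factor'' as additive in the index, but the indexing of the elliptic components is not additive across the missing $\Ell_0$ (e.g.\ $z^{k}\Ell_1=\Ell_{k+1}$ for $k\geqslant 0$ but $z^{k}\Ell_1=\Ell_{k}$ for $k\leqslant -1$), which is exactly why the paper's case-by-case displays in the proof of Proposition \ref{prop_evimage} distinguish $n'\geqslant 1$ from $n'\leqslant -1$. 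The final inequality may well survive a corrected accounting, but as written the per-step shift bounds ($\{-2,\dots,2\}$ for commutator steps, $\{0,1\}$ for parabolic steps) are not established, so the proposal has a genuine gap. The most economical repair is to adopt the paper's dichotomy: handle all hyperbolic endpoints at once via the relative Euler class and Theorem \ref{thm_exist_condition}, and reserve the multiplicative analysis for elliptic endpoints, where the cited elliptic-image lemmas really do suffice.
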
 
    \begin{proof}
        Since $\ev(\phi)$ projects to $\phi(c_p)^{-1}$ in $\psl$, $\ev(\phi)\in \Hyp_n$ for some $n$ if $\phi(c_p)$ is hyperbolic, and $\ev(\phi)\in \Ell_n$ for some $n$ if $\phi(c_p)$ is elliptic. 
        In the case where $\phi(c_p)$ is hyperbolic, the relative Euler class $e(\phi)$ is well-defined, and by Theorem \ref{thm_exist_condition},
        its value is bounded by $1-2g\leqslant e(\phi)\leqslant 2g + p - 2$. By the definition of $\ev(\phi)$ and \textcolor{black}{of} the relative Euler class, $e(\phi) = n$ if and only if $\ev(\phi)$ lies in $\Hyp_n$, which completes the proof.
        %the proof proceeds as follows.
        %We will first prove the proposition for the case $g\geqslant 1$ and $p = 1$. 
        %Next, we will use this result to prove the case $g\geqslant 1$ and $p \geqslant 2$. Finally, we will adress the case when $g = 0$. 
        In the case where $\phi(c_p)$ is elliptic, 
        we will proceed by induction on $p$. First, we will consider the base case $p = 1$. Next, for \textcolor{black}{$p\geqslant 2$ and} $g\geqslant 1$, assuming the proposition holds for $\Sigma_{g, p-1}$, we will prove the proposition for $\Sigma = \Sigma_{g, p}$. Finally, we will address the case \textcolor{black}{where $p\geqslant 2$ and} $g = 0$.
        \smallskip 
        
        If $p = 1$,
        then $g\geqslant 1$ as $\chi(\Sigma) = 1 - 2g\leqslant -1$.
        \textcolor{black}{W}e will proceed by induction on $g$. For the base case $g = 1$, 
        the proposition follows directly from Theorem \ref{thm_liftcommu}. 
        Assume that the proposition holds for $\Sigma_{g-1,1}$. 
        Let 
        $\gamma_1 = [a_1,b_1]\cdots[a_{g-1},b_{g-1}]\in \pi_1(\Sigma)$ and 
        $\gamma_2 = [a_{g},b_{g}]\in \pi_1(\Sigma)$. Then the curves $\gamma_1$ and $\gamma_2$ separate $\Sigma$
        into three subsurfaces $P$, $\Sigma_1$, and $\Sigma_2$, where 
        $P\cong \Sigma_{0,3}$,
        $\Sigma_1\cong \Sigma_{g-1,1}$, and $\Sigma_2\cong \Sigma_{1,1}$\textcolor{black}{; see Figure \ref{fig:Prop_3.7}(a).} Applying the induction hypothesis to $\Sigma_1$, 
        we have $$\ev\big(\phi|_{\pi_1(\Sigma_1)}\big) = [\widetilde{\phi(a_1)},\widetilde{\phi(b_1)}]\cdots[\widetilde{\phi(a_{g-1})}, \widetilde{\phi(b_{g-1})}]\in \textcolor{black}{\Hyp_{n_1}\cup \Ell_{n_1}}$$ 
        for some $n_1\in \{3-2g,\cdots, 2g - 3\}$. Also, applying Theorem \ref{thm_liftcommu} to $\Sigma_2$, we have $$\ev\big(\phi|_{\pi_1(\Sigma_2)}\big) = [\widetilde{\phi(a_g)},\widetilde{\phi(b_g)}]
        \in \textcolor{black}{\Hyp_{n_2}\cup \Ell_{n_2}}$$
        for some $n_2\in \{-1, 0, 1\}$.
        Let $m:\univcover\times \univcover\to \univcover$ be the product map sending $(\widetilde{A}, \widetilde{B})$ to $\widetilde{A}\widetilde{B}$.
        Since $\ev(\phi)$ equals the product $$\ev(\phi) = m\Big(\ev\big(\phi|_{\pi_1(\Sigma_1)}\big),\  \ev\big(\phi|_{\pi_1(\Sigma_2)}\big)\Big)
        = \ev\big(\phi|_{\pi_1(\Sigma_1)}\big)\cdot  \ev\big(\phi|_{\pi_1(\Sigma_2)}\big),$$  it lies in one of 
    \begin{eqnarray*}
        m(\Hyp_{n_1}\times \Hyp_{n_2})
        &=& z^{n_1+n_2}\cdot m(\Hyp_0\times \Hyp_0)
        ,\\
        m(\Hyp_{n_1}\times \Ell_{n_2})
         &=& \begin{cases}
            z^{n_1+n_2-1}\cdot m(\Hyp_0\times \Ell_1) & \mbox{ if }n_2 = 1 \\
            z^{n_1+n_2}\cdot m(\Hyp_0\times \Ell_1) & \mbox{ if }n_2 = -1
        \end{cases},\\
        m(\Ell_{n_1}\times \Hyp_{n_2})
        &=& 
        \begin{cases}
            z^{n_1+n_2-1}\cdot m(\Ell_1\times \Hyp_0) & \mbox{ if }n_1\geqslant 1 \\
            z^{n_1+n_2}\cdot m(\Ell_1\times \Hyp_0) & \mbox{ if }n_1\leqslant -1
        \end{cases},  
        \text{ and} \\
        m(\Ell_{n_1}\times \Ell_{n_2}) &=&
        \begin{cases}
            z^{n_1+n_2-1}\cdot m(\Ell_{-1}\times \Ell_1) & \mbox{ if }n_1\geqslant 1\mbox{ and }n_2 = 1, \\
            z^{n_1+n_2+1}\cdot m(\Ell_{-1}\times \Ell_1) & \mbox{ if }n_1\leqslant -1\mbox{ and }n_2 = -1,\\
            z^{n_1+n_2}\cdot m(\Ell_{-1}\times \Ell_1) & \text{\textcolor{black}{if} otherwise.}
            %\mbox{ if }n\geqslant 1\mbox{ and }n_2 = -1\mbox{ or } n\leqslant -1\mbox{ and }n_2 = 1\\
        \end{cases}
    \end{eqnarray*}
        Since $\ev(\phi)$ is elliptic, by Theorem \ref{thm_prodimage} and 
        Lemma \ref{lem_evimage_base}, $\ev(\phi)\in  \Ell_{n_1+n_2-1}\cup\Ell_{n_1+n_2}\cup \Ell_{n_1+n_2+1}$ where $2-2g\leqslant n_1+n_2\leqslant 2g-2$. Therefore, the proposition holds for all $\Sigma_{g,1}$ with $g\geqslant 1$.
        \begin{figure}[hbt!]
        \centering
        \begin{overpic}[width=1.0
        \textwidth]{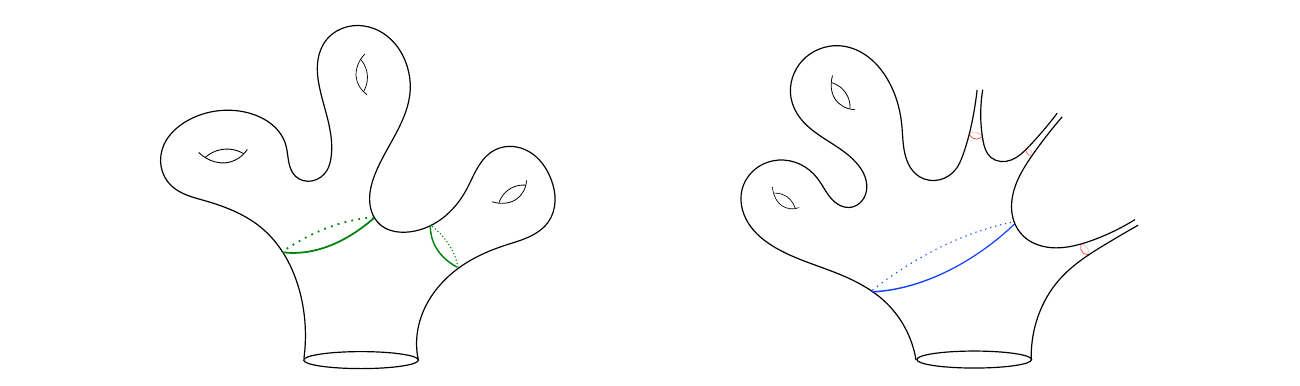}
            \put(25,9){\textcolor{OliveGreen}{$\gamma_1$}}
            \put(31,9.5){\textcolor{OliveGreen}{$\gamma_2$}}

            \put(75,8.5){\textcolor{blue}{$\gamma$}}

            \put(75,23.5){\textcolor{red}{$c_1$}}

            \put(82,21){\textcolor{red}{$c_2$}}

            \put(88,12.5){\textcolor{red}{$c_3$}}

        %subsurfaces
            \put(27,3.3){\textcolor{black}{$P$}}

            \put(17,10.5){\textcolor{black}{$\Sigma_1$}}

            \put(40,9){\textcolor{black}{$\Sigma_2$}}

            \put(74,3.3){\textcolor{black}{$P$}}

            \put(68,12){\textcolor{black}{$\Sigma\setminus P$}}
\begin{comment}
        %cases
            \put(34,1){\textcolor{black}{(a) $\Sigma = \Sigma_{3,1}$}}

            \put(81,1){\textcolor{black}{(b) $\Sigma = \Sigma_{2,4}$}}
\end{comment}
        %indices
            \put(21,-3){\textcolor{black}{(a)  $\Sigma = \Sigma_{3,1}$}}

            \put(69,-3){\textcolor{black}{(b)  $\Sigma = \Sigma_{2,4}$}}

 \end{overpic}
\vspace{0.3em}        \caption{\label{fig:Prop_3.7} Decomposition of $\Sigma$ for \textcolor{black}{the} induction.}
    \end{figure}

       We now let \textcolor{black}{$p\geqslant 2$ and $g\geqslant 1$}, and assume that the proposition holds for $\Sigma_{g, p-1}$.
        Similar to the case $p = 1$, we let $\gamma = [a_1,b_1]\cdots[a_g,b_g]c_1\cdots c_{p-2}$, separating $\Sigma$ into two subsurfaces $P$ and $\Sigma\setminus P$, 
        where 
        $P\cong \Sigma_{0,3}$ and
        $\Sigma\setminus P\cong \Sigma_{g,p-1}$\textcolor{black}{; see Figure \ref{fig:Prop_3.7}(b).} Applying the induction hypothesis to $\Sigma\setminus P$, we have 
        $$\ev\big(\phi|_{\pi_1(\Sigma\setminus P)}\big) = [\widetilde{\phi(a_1)},\widetilde{\phi(b_1)}]\cdots[\widetilde{\phi(a_g)}, \widetilde{\phi(b_g)}]\widetilde{\phi(c_1)}\cdots \widetilde{\phi(c_{p-2})}\in \Ell_{n'}\cup \Hyp_{n'}$$ 
        for some $n'\in \{1-2g,\cdots,  2g + p - 3\}$. 
        Then the value $\ev(\phi) = m\big(\ev(\phi|_{\pi_1(\Sigma\setminus P)}),\  \widetilde{\phi(c_{p-1})}\big)$ lies in either 
        $$m(\Hyp_{n'}\times \Par_0^+)
        = z^{n'}\cdot m(\Hyp_0\times \Par_0^+)
        $$
        or
        $$m(\Ell_{n'}\times \Par_0^+) =
        \begin{cases}
            z^{n'-1}\cdot m(\Ell_1\times \Par_0^+) & \mbox{ if }n'\geqslant 1 \\
            z^{n'}\cdot m(\Ell_1\times \Par_0^+) & \mbox{ if }n'\leqslant -1.
        \end{cases}
        $$
        Since $\ev(\phi)$ is elliptic, Theorem \ref{thm_prodimage} and 
        Lemma \ref{lem_evimage_base} 
        imply that $\ev(\phi)\in \Ell_{n'}\cup \Ell_{n'+1}$. 
        Therefore, the proposition holds for all $\Sigma_{g,p}$ with $g\geqslant 1$ and $p\geqslant 1$.

        Finally, we address the case $g = 0$. 
        \textcolor{black}{In this case,} $p\geqslant 3$ as $\chi(\Sigma) = 2 - p \leqslant -1$.
        we will proceed by induction on $p$. For the base case $p = 3$, 
        the proposition follows directly from Theorem \ref{thm_prodimage}. 
        For $p\geqslant 4$, assuming that the proposition holds for $\Sigma_{0,p-1}$, the induction follows verbatim the previous case $g\geqslant 1$ and $p\geqslant 2$. The proof of Proposition \ref{prop_evimage} is now complete.
        \end{proof}

    To prove Theorem \ref{thm_tothyp}, we also need the following Lemma \textcolor{black}{from \cite{goldman_torus}}.
    \begin{lemma}\label{lem_torus}\cite[Lemma 3.4.5]{goldman_torus} 
        Let $A, B\in \SL$. The following conditions are equivalent:
        \begin{enumerate}[(1)]
       \item $\tr[A,B]<2$;
       \item $A, B$ are hyperbolic elements and their invariant axes cross.
       \end{enumerate}
    \end{lemma}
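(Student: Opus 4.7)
The plan is to reduce each direction to a normal form by conjugating the pair $(A,B)$ by an element of $\SL$ (which preserves both $\tr[A,B]$ and the axis configuration) and then to compute $\tr[A,B]$ explicitly.

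For the direction (2) $\Rightarrow$ (1), I would conjugate $A$ to the diagonal form $A = \xdiag$ with $x>1$, so that the invariant axis of $A$ is the positive imaginary ray in $\mathbb{H}^2$. Writing $B = \abcd$ with $ad-bc = 1$, expanding $ABA^{-1}B^{-1}$ directly gives
$$\tr[A,B] = 2ad - (x^2 + x^{-2})bc = 2 - bc(x - x^{-1})^2.$$
The fixed points of the M\"obius action of $B$ on $\partial \mathbb{H}^2 = \mathbb{R}\cup\{\infty\}$ are the roots of $cz^2 + (d-a)z - b = 0$, whose product is $-b/c$; the axis of the hyperbolic element $B$ crosses the imaginary ray if and only if these fixed points have opposite signs, i.e., $bc > 0$. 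Since $(x-x^{-1})^2 > 0$, this is equivalent to $\tr[A,B] < 2$.

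For the direction (1) $\Rightarrow$ (2), I would first show that $A$ (and by the symmetry $\tr[B,A] = \tr[A,B]$, also $B$) must be hyperbolic by eliminating alternatives. The case $A = \pm \mathrm I$ gives $\tr[A,B] = 2$. If $A$ is parabolic, conjugating to $A = \pm\parpp$ and computing yields $\tr[A,B] = 2 + c^2 \geq 2$. If $A$ is elliptic, conjugating to $A = \elltheta$ with $\theta \in (0,\pi)$ and using the Fricke identity $\tr[A,B] = (\tr A)^2 + (\tr B)^2 + (\tr AB)^2 - (\tr A)(\tr B)(\tr AB) - 2$ together with $(a+d)^2 + (c-b)^2 = a^2+b^2+c^2+d^2 + 2$ yields
$$\tr[A,B] = 2\cos^2\theta + (a^2+b^2+c^2+d^2)\sin^2\theta \geq 2,$$
since $a^2+b^2+c^2+d^2 \geq 2|ad| + 2|bc| \geq 2|ad-bc| = 2$. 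Each case contradicts $\tr[A,B]<2$, so both $A$ and $B$ are hyperbolic, and the trace formula from the previous paragraph then forces $bc > 0$, giving crossing axes.

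The main obstacle I anticipate is the elliptic case, where the bound $\tr[A,B] \geq 2$ is not at all obvious; the key trick is to rewrite $(a+d)^2 + (c-b)^2$ via $\det B = 1$ and apply the AM--GM-type bound above. A small additional verification is that $bc > 0$ combined with $ad - bc = 1$ forces $B$ to be hyperbolic: if $B$ were parabolic, then $a+d = \pm 2$ and $ad = 1+bc > 1$ would give $(a\mp 1)^2 = -bc < 0$, impossible; if $B$ were elliptic, then $(a+d)^2 < 4$ while $ad = 1+bc > 1$ with $a,d$ of the same sign would force $(a+d)^2 \geq 4ad > 4$, again a contradiction.
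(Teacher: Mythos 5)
Your proof is correct, and there is nothing in the paper to compare it against: the paper does not prove this lemma at all, it simply quotes it as Lemma 3.4.5 of Goldman's one-holed torus paper \cite{goldman_torus}. Your argument is a sound self-contained verification in the same computational spirit as Goldman's original one (normal forms plus trace identities). Both directions check out: with $A$ diagonalized, $\tr[A,B]=2-bc\,(x-x^{-1})^2$ is right, and the separation criterion ``axes cross iff the fixed points of $B$, whose product is $-b/c$, have opposite signs'' correctly converts crossing into $bc>0$ (the degenerate case $c=0$ is consistent, since then $bc=0$ and $\tr[A,B]=2$). In the converse, the eliminations are all valid: $\tr[A,B]=2+c^2$ in the parabolic normal form, and in the elliptic case your identity $\tr[A,B]=2\cos^2\theta+(a^2+b^2+c^2+d^2)\sin^2\theta$ follows from the Fricke relation together with $(a+d)^2+(c-b)^2=a^2+b^2+c^2+d^2+2$, and $a^2+b^2+c^2+d^2\geqslant 2|ad|+2|bc|\geqslant 2$ closes it; the symmetry $\tr[B,A]=\tr[A,B]$ (since $[B,A]=[A,B]^{-1}$ and trace is inversion-invariant in $\SL$) transfers the conclusion to $B$. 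Two cosmetic remarks: your final paragraph showing that $bc>0$ forces $B$ hyperbolic is redundant, since you have already shown $B$ is hyperbolic by the symmetric elimination; and in the elliptic case the restriction $\theta\in(0,\pi)$ is unnecessary (conjugacy in $\SL$ only gives you a rotation by some $\theta$ with $\sin\theta\neq 0$), but the formula and the bound are insensitive to the sign of $\theta$, so nothing breaks.
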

    We now prove Theorem \ref{thm_tothyp}.
    \begin{proof}[Proof of Theorem \ref{thm_tothyp}]
        %represented by a simple closed curve, 
        Since $\phi\in \NPns$, for any simple closed curve $\gamma\in \pi_1(\Sigma)$, $\phi(\gamma)$ is  either hyperbolic or elliptic. Assume, for 
        \textcolor{black}{the sake of}
        contradiction, that 
        $\phi(\gamma)$ is elliptic. 
        We will derive a contradiction separately in the cases where $\gamma$ is non-separating and  where $\gamma$ is separating, thereby concluding that $\phi(\gamma)$ is hyperbolic.
        \smallskip

        If $\gamma$ is non-separating, %represented by a non-separating simple closed curve. 
        then there is a non-separating simple closed curve $\delta$ whose geometric intersection number with $\gamma$ equals $1$. 
        Let $T$ be a one-holed torus containing $\gamma$ and $\delta$, 
        whose boundary component \textcolor{black}{represents} the commutator $[\gamma, \delta]$. 
        Let $\widetilde{\phi(\gamma)}$ and $\widetilde{\phi(\delta)}$ be arbitrary lifts of $\phi(\gamma)$ and $\phi(\delta)$, respectively, in $\univcover$. Consider
        their commutator $[\widetilde{\phi(\gamma)}, \widetilde{\phi(\delta)}]$,
        %be the lifted commutator of $\big(\phi(\gamma), \phi(\delta)\big)\in \psl\times \psl$, 
        which lies in 
        $\mathcal{I}\subset \univcover$ defined in Theorem \ref{thm_liftcommu}.
        Since $\phi(\gamma)$ is elliptic, by Lemma \ref{lem_torus}, the projection of $[\widetilde{\phi(\gamma)}, \widetilde{\phi(\delta)}]$ to $\SL$ has trace at least 2, and therefore $[\widetilde{\phi(\gamma)}, \widetilde{\phi(\delta)}]$ lies in $\overline{\Hyp_0}\subset \mathcal{I}$. By the definition of the relative Euler class, we have $e(\phi|_{\pi_1(T)}) = 0$; and by Proposition \ref{prop_additivity}, $e(\phi|_{\pi_1(\Sigma\setminus T)}) = n = -\chi(\Sigma\setminus T)$. However, as all peripheral elements of $\pi_1(\Sigma)$ are contained in $\pi_1(\Sigma\setminus T)$ and $p_-(s) = 1$, $\phi|_{\pi_1(\Sigma\setminus T)}$ sends some peripheral element of $\pi_1(\Sigma\setminus T)$ to a negative parabolic element. Hence by Theorem \ref{thm_exist_condition}, $e(\phi|_{\pi_1(\Sigma\setminus T)}) \leqslant -\chi(\Sigma\setminus T) - 1$, \textcolor{black}{which is} a contradiction. 
        Therefore, $\phi(\gamma)$ must be hyperbolic.
        \smallskip
        
        If $\gamma$ is separating, %represented by a separating simple closed curve, 
        then it separates $\Sigma$ into two subsurfaces 
        $\Sigma_1$ and $\Sigma_2$, where $\Sigma_1\cong \Sigma_{j, k+1}$ and $\Sigma_2 \cong \Sigma_{g-j, p-k+1}$ for some $j\in \{0,\cdots, g\}$ and $k\in \{0,\cdots, p\}$. Choose primitive peripheral elements $c_1, \cdots, c_p$ of $\pi_1(\Sigma)$ so that the fundamental groups of $\Sigma_1$ and $\Sigma_2$ are presented as 
        $$\pi_1(\Sigma_1) = \langle a_1, b_1, \cdots, a_j, b_j,c_1,\cdots, c_k,\gamma\ |\ [a_1, b_1] \cdots, [a_j, b_j]c_1\cdots c_k \gamma  \rangle$$
        and 
        $$\pi_1(\Sigma_2) = \langle a_{j+1}, b_{j+1}, \cdots, a_g, b_g,c_{k+1},\cdots, c_{p},\gamma\ |\ [a_{j+1}, b_{j+1}] \cdots, [a_g, b_g]c_{k+1}\cdots c_p \gamma^{-1}  \rangle.$$
        %Choose primitive peripheral elements $c_1,\cdots, c_p$ of  $\pi_1(\Sigma)$ representing the loops around the $p$ punctures, such that for some $k\in \{1,\cdots, p-2\}$, $c_1,c_2,\cdots, c_k$ and $\gamma$ are the peripheral elements of $\pi_1(\Sigma_1),$ while $ c_{k+1},\cdots, c_p\in \pi_1(\Sigma_2)$ and $\gamma^{-1}$ are the peripheral elements of $\pi_1(\Sigma_2).$ 
        Up to a permutation of the peripheral elements, and after possibly reindexing $\Sigma_1$ and $\Sigma_2$, we can assume that 
        $s_i = +1$ for all $i\in \{1,\cdots, p-1\}$ and $s_p = -1$. 
        Consider the evaluations 
        $\ev\big(\phi|_{\pi_1(\Sigma_1)}\big)$ and $\ev\big(\phi|_{\pi_1(\Sigma_2)}\big)$, which are respectively the lifts of $\phi(\gamma)^{-1}$ and $\phi(\gamma)$ in $\univcover$.
        Since $\phi(\gamma)$ is elliptic, 
        for $i\in \{1,2\}$,         $\ev\big(\phi|_{\pi_1(\Sigma_i)}\big)$ lies in $\Ell_{n_i}$ for some $n_i\in \mathbb{Z}$. 
        We \textcolor{black}{will} first determine the possible values of $n_1$ and $n_2$ using Proposition \ref{prop_evimage}. 
        \textcolor{black}{Then} by comparing $\ev\big(\phi|_{\pi_1(\Sigma_1)}\big)$ and $\ev\big(\phi|_{\pi_1(\Sigma_2)}\big)$, we \textcolor{black}{will} conclude that the relative Euler class of $\phi$ cannot be $n = -\chi(\Sigma) - 1$, which leads to a contradiction.

        As $\phi(c_1), \cdots,\phi(c_k)$ are positive parabolic,
        by Proposition \ref{prop_evimage}, the evaluation
        $$\ev\big(\phi|_{\pi_1(\Sigma_1)}\big) = [\widetilde{\phi(a_1)},\widetilde{\phi(b_1)}]\cdots[\widetilde{\phi(a_j)}, \widetilde{\phi(b_j)}]\widetilde{\phi(c_1)}\cdots \widetilde{\phi(c_k)}$$ lies in $\Ell_{n_1}$ for an $n_1\in \{1-2j,\cdots, 2j + k - 1\}$.
       To determine the possible values of $n_2$, we first consider the curve $\gamma c_p^{-1}$ that separates $\Sigma_2$ into two subsurfaces $P$ and $\Sigma_2\setminus P\cong \Sigma_{g-j, p-k}$. 
        Similar to $\ev\big(\phi|_{\pi_1(\Sigma_1)}\big)$ above, 
        as 
        $\phi(c_{k+1}), \cdots,$ and $\phi(c_{p-1})$ are positive parabolic and $\phi\big(\gamma c_p^{-1})$ is either elliptic or hyperbolic,
        the evaluation
        $$\ev(\phi|_{\pi_1(\Sigma_2\setminus P)}) = [\widetilde{\phi(a_{j+1})},\widetilde{\phi(b_{j+1})}]\cdots[\widetilde{\phi(a_g)}, \widetilde{\phi(b_g)}]\widetilde{\phi(c_{k+1}})\cdots \widetilde{\phi(c_{p-1})}$$
        %which is a lift of $\phi(\gamma c_p^{-1})$ in $\univcover$,
        lies in $\Ell_{n'_2}\cup \Hyp_{n'_2}$ for an 
        $n'_2\in \{1-2(g-j), \cdots, 2(g-j) + p-k- 2\}$.
        Let $\widetilde{\phi(c_p)}$ be the lift of $\phi(c_p)$ in $\Par^-_0$, and observe that 
        $\ev\big(\phi|_{\pi_1(\Sigma_2)}\big) = \ev(\phi|_{\pi_1(\Sigma_2\setminus P)})\cdot\widetilde{\phi(c_p)}$. If $\ev\big(\phi|_{\pi_1(\Sigma_1\setminus P)}\big)\in \Ell_{n'_2} = z^{n'_2-1}\Ell_1$, then by Theorem \ref{thm_prodimage},
        $n_2 = n'_2$; and \textcolor{black}{if} $\ev\big(\phi|_{\pi_1(\Sigma_1\setminus P)}\big)\in \Hyp_{n'_2} = z^{n'_2}\Hyp_0$, \textcolor{black}{then} by Lemma \ref{lem_evimage_base}, 
        $n_2 = 
        n'_2 \mbox{ if } n'_2\geqslant 1$, and 
        $n_2 = n'_2-1 \mbox{ if } n'_2\leqslant -1$. Thus,
        $n_2\in \{-2(g-j), \cdots, 2(g-j) + p-k- 2\}$.
        \smallskip
        
        By the definition of the relative Euler class $n  = e(\phi)$, 
        we have $\ev\big(\phi|_{\pi_1(\Sigma_2)}\big)= z^n\ev\big(\phi|_{\pi_1(\Sigma_1)}\big)^{-1}$, which implies that 
        $\Ell_{n_2} = z^n\Ell_{-n_1}$. 
        Since $n_1\leqslant 2j + k - 1$ and $n_2 \leqslant 2(g-j) + p-k- 2$, and since $\Ell_{n_1} = z^{n_1+n_2 - 1}\Ell_{-n_2}$ when $n_1, n_2$ are both positive, 
        we conclude that 
        $$n \textcolor{black}{ \ = n_1 + n_2 - 1}
        \leqslant  (2j + k - 1) + \big(2(g-j) + p-k- 2\big) - 1 \\
        \textcolor{black}{ \ = 2g + p - 4} = -\chi(\Sigma) - 2,$$
        contradicting that $n = -\chi(\Sigma) - 1$. Therefore, $\phi(\gamma)$ is hyperbolic. This completes the proof.
    \end{proof}

    \subsection{Proof of Corollary \ref{cor_almost_fuchsian}}\label{proof_cor}
    Finally, we prove Corollary \ref{cor_almost_fuchsian}, which we restate below as Corollary \ref{cor2_almost_fuchsian}.
    \begin{corollary}\label{cor2_almost_fuchsian}
        Let $\Sigma = \Sigma_{g,p}$ with $\chi(\Sigma)\leqslant -2$ and $p\geqslant 1$,  and let $n = -\chi(\Sigma) - 1$ and $s\in \{\pm 1\}^p$ satisfying $p_-(s) = 1$. 
        For every $\phi\in \NPns$, 
        there exists a pair of pants $P$ in $\Sigma$ such that,
        for each connected component $\Sigma'$ of the complement $\Sigma\setminus P$,
        the restriction $\phi|_{\pi_1(\Sigma')}$ is Fuchsian.
    \end{corollary}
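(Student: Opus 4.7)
The plan is to use the additivity of the relative Euler class (Proposition \ref{prop_additivity}) together with the sharp Milnor--Wood bounds from Theorem \ref{thm_exist_condition} to force the restrictions of $\phi$ to the components of $\Sigma \setminus P$ to achieve their maximal Euler class, and thus be Fuchsian by Proposition \ref{prop_holonomy}. The guiding geometric idea is that the unique negative-parabolic puncture of $\phi$ must be ``absorbed'' into $P$, which concentrates the ``Euler class deficit'' on $P$ and leaves each component of $\Sigma \setminus P$ maximal.

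Assuming without loss of generality that $s_p = -1$, I would take $P \subset \Sigma$ to be any pair of pants appearing in a pants decomposition of $\Sigma$ that has the puncture associated with $c_p$ as one of its boundary components; such a decomposition exists because $\chi(\Sigma) \leqslant -2$. With this choice, $p_-(s|_P) = 1$, while for each connected component $\Sigma'$ of $\Sigma \setminus P$ one has $p_-(s|_{\Sigma'}) = 0$. Since $P \subsetneq \Sigma$, at least one boundary component of $P$ is a simple closed curve of $\Sigma$ shared with some $\Sigma'$; by total hyperbolicity (Theorem \ref{thm_tothyp}), its $\phi$-image is hyperbolic, which gives $p_0(s|_P) \geqslant 1$ and $p_0(s|_{\Sigma'}) \geqslant 1$ for every $\Sigma'$. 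Theorem \ref{thm_exist_condition} therefore applies to every piece and yields
$$e(\phi|_{\pi_1(P)}) \leqslant -\chi(P) - 1 = 0 \qquad \text{and} \qquad e(\phi|_{\pi_1(\Sigma')}) \leqslant -\chi(\Sigma') \text{ for each } \Sigma'.$$

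Summing these bounds via Proposition \ref{prop_additivity} gives
$$n = e(\phi) = e(\phi|_{\pi_1(P)}) + \sum_{\Sigma'} e(\phi|_{\pi_1(\Sigma')}) \leqslant 0 + \sum_{\Sigma'}\big(-\chi(\Sigma')\big) = -\chi(\Sigma\setminus P) = -\chi(\Sigma) - 1 = n,$$
forcing equality throughout. In particular $e(\phi|_{\pi_1(\Sigma')}) = -\chi(\Sigma')$ for every component $\Sigma'$, and Proposition \ref{prop_holonomy} then implies that each $\phi|_{\pi_1(\Sigma')}$ is a holonomy representation, hence Fuchsian. The only point that requires genuine thought is the purely topological verification that a pair of pants $P$ with $c_p$ as a peripheral component and at least one internal boundary always exists, which is precisely where the hypothesis $\chi(\Sigma) \leqslant -2$ enters essentially; once this is granted, the remainder is pure bookkeeping with relative Euler classes and signs.
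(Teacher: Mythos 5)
Your proposal is correct and follows essentially the same route as the paper: choose a pair of pants $P$ containing the negative-parabolic puncture, use Theorem \ref{thm_tothyp} to see the common boundary curves are hyperbolic, bound $e(\phi|_{\pi_1(P)})\leqslant 0$ and $e(\phi|_{\pi_1(\Sigma')})\leqslant -\chi(\Sigma')$ via Theorem \ref{thm_exist_condition}, force equality by additivity (Proposition \ref{prop_additivity}), and conclude with Proposition \ref{prop_holonomy}. The only cosmetic difference is that you spell out the per-component summation and the existence of an internal boundary curve, which the paper leaves implicit.
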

    \begin{proof}
        Let $c_1,\cdots, c_p$ be 
        the primitive peripheral elements of $\pi_1(\Sigma)$. As $p_-(s) = 1$,
        there is a unique $i\in \{1,\cdots, p\}$ such that $\phi(c_i)$ is negative parabolic.
        Let $P$ be a pair of pants in $\Sigma$ whose fundamental group $\pi_1(P)$ contains $c_i$.
        By Theorem \ref{thm_tothyp}, every common boundary component of $P$ and $\Sigma\setminus P$ maps to a hyperbolic element of $\psl$. Hence $\phi|_{\pi_1(P)}\in \HP(P)$, and for each connected component $\Sigma'$ of the complement $\Sigma\setminus P$, $\phi|_{\pi_1(\Sigma')}\in \HP(\Sigma')$.
        Moreover, Theorem \ref{thm_exist_condition} implies that $e(\phi|_{\pi_1(P)})\leqslant0$ and $e(\phi|_{\pi_1(\Sigma\setminus P)}) \leqslant -\chi(\Sigma\setminus P) = n$, and by Proposition \ref{prop_additivity}, $e(\phi|_{\pi_1(P)})=0$ and $e(\phi|_{\pi_1(\Sigma\setminus P)}) = n$. Then for each connected component $\Sigma'$ of $\Sigma\setminus P$, we have $e\big(\phi|_{\pi_1(\Sigma')}\big) = -\chi(\Sigma')$. Consequently, by Proposition \ref{prop_holonomy}, $\phi|_{\pi_1(\Sigma')}$ is a holonomy representation, hence Fuchsian.
    \end{proof}

\noindent
Inyoung Ryu\\
Department of Mathematics\\  Texas A\&M University\\
College Station, TX 77843, USA\\
(riy520@tamu.edu)


\begin{thebibliography}{99}

\setlength{\itemsep}{0pt}
\setlength{\parsep}{0pt}
%\bibitem{CB-M} D. Chasteen-Boyd, 
%\textit{Connected components of type-preserving representations of the twice-punctured torus}, in preparation.
    %\bibitem{B} M. Barnes, 
    %\textit{Connected components of type-preserving representations of the five punctured sphere}, 2024, preprint.

%\bibitem{deroin_tholozan}
%    B. Deroin and  N. Tholozan,
%    \textit{Super-maximal representations from fundamental groups of punctured spheres to $\psl$},
%    Annales Scientifiques de l’École Normale Supérieure, 52, no.5, (2019), 1305-1329.

    
%\bibitem{fricke_klein}
%    R. Fricke and F. Klein, 
%    \textit{Vorlesungen \"uber die Theorie der automorphen Functionen},
%    Teubner, Leipzig, Vol.I (1897) Vol.II(1912)




\bibitem{bowditch}
    B.H. Bowditch,
    \textit{Markoff triples and quasifuchsian groups},
    Proc. London Math. Soc., Vol 77 (1998), 697–736.

    
    \bibitem{goldman_measure}
    W. Goldman,
    \textit{The symplectic nature of fundamental groups of surfaces},
    Adv. in Math. 54 (1984),
    200–225.


    
\bibitem{goldman}
    W. Goldman,
    \textit{Topological components of spaces of representations},
    Invent. Math. 93 (1988), 
    no.3, 
    557–607.


\bibitem{goldman_torus}
    W. Goldman,
    \textit{The modular group action on real SL(2)-characters of a one-holed torus},
    Geom. Topol., 7 (2003), 443–486. 

\bibitem{goldman_conjecture}
    W. Goldman,
    \textit{Mapping class group dynamics on surface group representations},
    Proc. Symp. Pure Math. vol. 74 (2006), 189–214.

\bibitem{Mityagin}
    B. Mityagin,
    \textit{The zero set of a real analytic function},
    Math Notes, 107 (2020), 529–530.


\bibitem{MW}
    J. Marché and M. Wolff, \textit{The modular action on $\psl$-characters in genus 2},
    Duke Math., J. 165(2) (2016), 371-412.

\bibitem{MW2}
    J. Marché and M. Wolff, \textit{Six-point configurations in the hyperbolic plane and ergodicity of the mapping class group}, Groups Geom. Dyn. 13 (2019), no. 2, pp. 731–766.


\bibitem{RY}
    I. Ryu and T. Yang,
    \textit{Connected components of the space of type-preserving representations}, 2025, preprint.
    

%\bibitem{kashaev}
%    R. Kashaev,
%    \textit{Coordinates for the moduli space of flat $\psl$–connections},
%    Math. Res. Lett. 12 (2005), 
%    23–36.

%\bibitem{KPW} I. Kim, P. Pansu and X. Wan, {\em On possible values of the signature of flat symplectic bundles over surfaces with boundary}, arXiv: 2407.10515.

%\bibitem{MPY} S. Maloni, F. Palesi and T. Yang {\em On type-preserving representations of thrice punctured projective plane group}, 
%J. Differential Geom. 119(3): 421-457.

%\bibitem{milnor}
%    J. Milnor,
%    \textit{On the existence of a connection with curvature zero},
%    Comment. Math. Helv. 32 (1958), 
%    215-223.


%\bibitem{Mondello} G. Mondello, {\em Topology of representation spaces of surface groups in  $PSL_2(\mathbb R)$  with assigned boundary monodromy and nonzero Euler number}, 
%Pure Appl. Math. Q. 12 (2016), no. 3, 399-462.

%\bibitem{RY} I. Ryu and T. Yang, in preparation.

%\bibitem{wood}
%    J. Wood,
%    \textit{Bundles with totally disconnected structure group},
%    Comment. Math. Helv. 51 (1971), 
%    183-199. 



\bibitem{yang}
    T. Yang,
    \textit{On type-preserving representations
of the four-punctured sphere group},
    Geom. Topol. 20 (2016),
    1213–1255.



    


     



\begin{comment}
    W. Goldman, Discontinuous Groups and their Euler class
    W. Goldman, Two papers which changed my life: \\
    Milnor's seminal work on flat manifolds and bundles
    C. Ni, Characteristic classes and Obstruction Theory
    J.W. Milnor, Characteristic Classes  
\end{comment}



\begin{comment}
\bibitem{referencing}
    Wikibooks,
    \textit{LaTeX/Bibliography Management},
    [0nline],
    Accessed at https://en.wikibooks.org/wiki/LaTeX/Bibliography\_Management,
    (DATE ACCESSED).    
\end{comment}
    

\end{thebibliography}
\end{document}